\newtheorem{theorem}{Theorem}[section]
\newtheorem{lemma}[theorem]{Lemma}
\newtheorem{corollary}[theorem]{Corollary}
\newtheorem{proposition}[theorem]{Proposition}
\newtheorem{definition}[theorem]{Definition}
\theoremstyle{definition}
\newtheorem{example}[theorem]{Example}
\newtheorem{remark}[theorem]{Remark}
\numberwithin{table}{section}
\numberwithin{equation}{section}
\begin{document}
\title{Simplifying operators by polynomials} 
\author{ Olavi Nevanlinna }
\maketitle

 \begin{center}
{\footnotesize\em 
Aalto University\\
Department of Mathematics and Systems Analysis\\
 email: Olavi.Nevanlinna\symbol{'100}aalto.fi\\[3pt]
}
\end{center}

\begin{abstract}
We  collect, organise known results and  add  some new  ones of the following nature:  if $A$ is  a bounded operator  in a Hilbert or Banach space,  does there exist a nonconstant polynomial $p(z)$  such that $p( A)$ is "simpler", "nicer" than $A$. 

For example $p(A)$ could be  compact  or normal   even when $A$ is not;  then one says that $A$ is polynomially compact or polynomially normal.  Using {\it multicentric calculus}  to represent scalar functions  $\varphi(z)$ as functions of $p(z)$  one can then apply functional calculus available for $p(A)$ to represent $\varphi(A)$ even so that  functional calculus  could not be formulated directly for $A$. 

 We consider  inclusion chains  of increasing generality  as for example:   finite rank $\prec$  compact $\prec$ Riesz $\prec$  almost algebraic $\prec$  quasialgebraic $\prec$  biquasitriangular $\prec$  quasiatriangular $\prec$ bounded.  We also  discuss  whether  such  classes are stable under addition, typically from a subclass.   For example,   the sum of an  almost algebraic  and  a compact operator need not be polynomially almost algebraic,  while the sum of a  polynomially almost algebraic operator  with a finite rank operator  is always polynomially almost algebraic, etc.  
 
 Block   $2 \times 2$   triangular operators are considered  as a special case, mostly in form that  if the diagonal blocks  have a property , does the whole operator share the similar polynomial property. 
\end{abstract}  
\bigskip

\bigskip

{\it Keywords:}  polynomially compact, polynomially normal, polynomially Riesz, 
  block triangular operators, multicentric calculus, functional calculus

\smallskip

MSC (2020):   47- 02, 47A55, 47A60,  47B99 

\newpage

{\bf Preface}

\bigskip

This is an attempt to collect  and organise  results on classes of bounded operators for which functional calculus based on {\it multicentric calculus} would be particularly effective.   So, what is this I  begin to call multicentric calculus?

Some dozen years ago  I decided to have a  look at  representing functions in the complex plane using "several centers"  rather than just one, the origin.  Such a representation should be useful e.g. in functional calculus when the spectrum of an operator  has a few condensed clusters.   

If $\Lambda = \{ \lambda_i\}_1^d $ denotes the centers,  we take the {\it geometric average}   of $|z-\lambda_i|$  to  measure the closeness of  $z$ to $\Lambda$:
$$
{\rm dist}(z, \Lambda )=  \prod_1^d |z-\lambda_i| ^{1/d}.
$$
This immediately  suggests to  consider a "change of variable" $w= p(z)$  with $p(z) = \prod_1^d (z-\lambda_i)$, as then sets satisfying $|p(z)| \le \rho$  are mapped to discs $|w| \le  \rho$.  In fact, already Jacobi\footnote{ C. G. J. Jacobi, \"Uber Reihenentwicklungen, welche nach den Potenzen eines gegebenen Polynoms fortschreiten, und zu Coeffizienten Polynome eines niedereren Grades haben, J. Reine Angew. Math. 53 (1856), 103 - 126 } had considered expanding functions in the form
$$
\varphi (z) \sim \sum _{n=0}^\infty c_n(z) p(z)^n
$$
where  the coefficients $c_n$  were polynomials of degree less than that of $p$.  The idea that there would be a function of a new variable $w$, for which the power series would converge in a disc was not visible, nor in the later works on Jacobi series of this type.  

What I wanted was to have $w=p(z)$ as a new {\it global} variable and  for that purpose a scalar function
$$\varphi:  z \mapsto \mathbb \varphi(z) \in \mathbb C$$
 is represented with a vector valued function 
 $$f : w \mapsto  f(w) \in \mathbb C ^d.$$ 
 One can view this point of departure  as just a simple rearrangement of terms in the expansions above, but the  scenery changes:  there is, to put it mildly,  a lot of mathematics on functions defined in  discs.    Create $f$ from $\varphi$, apply analysis to $f$ on a disc and transform the results back to  original domain and to $\varphi$. For example, $f$ is holomorphic  if and only if $\varphi$ is;   for continuous $f$ one can define a product and Banach algebra such that $\varphi$ shows up as the Gelfand transform of $f$. 
   
 In this survey the intention is to collect results  on operator classes  in which $B=p(A)$ would have a property allowing $f(B)$ to be defined using some functional calculus, although $\varphi(A)$ could  not be directly defined or composed.  The results of this nature, either deep or some of them really simple,  are scattered in the literature and this attempt most likely is far from comprehensive.
   
\bigskip

Part of the purpose of making this available is to prompt comments and pointers to known results which I have missed or overlooked.  

\bigskip

 Karjalohja  26.3.2022
 \bigskip
 
 Olavi Nevanlinna
 
 \bigskip

 \newpage

\noindent{\bf Content}

\bigskip

\noindent{1. Introduction}
\bigskip

 {1.1 Multicentric calculus: polynomials as new variables}

{1.2 Summary of inclusion relations}
\bigskip

\noindent{2. Basic   operator classes}

\smallskip

2.1 {Algebraic, almost algebraic, quasialgebraic operators}

2.2 {Excursion into  meromorphic operator valued functions}

2.3 Compact and Riesz operators

2.4 Observations on the polynomial classes

\bigskip

\noindent{3.  Special results in Hilbert spaces}

\smallskip

3.1 Special classes

3.2 Polynomially normal and polynomially unitary operators

\bigskip

\noindent{4.  Block triangular operators}

\smallskip

4.1 Notation and spectrum

4.2 Perturbation results

4.3 $M_C$ polynomially almost algebraic, compact  and Riesz

\bigskip

\bigskip

\noindent{References}

\bigskip

\noindent{Notation and definitions}

\newpage

\newpage

\section{Introduction}

\smallskip

\subsection{Multicentric calculus:  polynomials as new variables}
 
 In this paper we consider  the following problem:  given a bounded operator or matrix $A$,   does there exist a (monic) polynomial $p$ , possibly of low order, such that  $p(A)$ would be "nicer",  for example  small in norm,  diagonalizable,  compact or normal etc,  such that a suitable functional calculus could be applied to it even if it would not be available for $A$ directly.  
 
 In {\it multicentric calculus} [37]  we  represent  scalar functions  $\varphi:  \ z \mapsto \varphi(z) \in \mathbb C$ using  functions $f : \ w \mapsto f(w) \in \mathbb C^d$. 
 Without going into details how $f$ is created from $\varphi$,  if
 a scalar function $\varphi$ is given  for which $\varphi(A)$ should be defined, we may  represent $\varphi$ in {\it multicentric} form 
 \begin{equation}\label{multi}
\varphi(z) = \sum_{j=1}^d \delta_j(z) f_j(p(z))
\end{equation}
and then obtain
 \begin{equation}\label{multi}
\varphi(A) = \sum_{j=1}^d \delta_j(A) f_j(p(A)).
\end{equation}
Here $\delta_j$ is the Lagrange polynomial   $\delta_j(z) = \prod_{k\not=j}\frac{z-\lambda_k}{\lambda_j-\lambda_k}$ and hence  $\delta_j(A)$ is always well defined for any bounded operator,  while  the terms $f_j(p(A))$  would be defined and computed with suitable functional calculus.  The approach was introduced in [37] with  roots in [36]  and further developments in [38], [4], [39], [40], [2]  and  [3].  
\bigskip

We shall ask questions such as whether there exists a polynomial $p$ such that  $p(A)$ becomes compact, normal, unitary, etc.  Much of the answers are known but scattered in the literature. Perhaps the most obvious one is that of  an operator being algebraic:  if there exists a nontrivial polynomial such that $p(A)=0$, then $A$ is algebraic and  its minimal polynomial is the unique monic polynomial of smallest degree at which happens.  We  begin with listing  related definitions,  survey what is known and formulate some new  results and pose some questions.  In particular we consider block operators of the form
\begin{equation}
M= \begin{pmatrix} A & C \\
& B\end{pmatrix}
\end{equation} 
which provide a rich class of operators at which we can  demonstrate the concepts. For example,  if $A$ and $B$ share a property, does there exist a polynomial such that $p(M)$ would have that property, too. 

\subsection{Summary of inclusion relations}

We sum here up  some of the basic inclusions and related perturbations. The notation  is explained in the next section\footnote{ and  there is a list of symbols at the end of the paper}.  Many of these are obvious, most of these are known but we find it useful to collect  and present them all here for easy consideration, while the next section contains  more details and proofs or references when needed.

The basic chains are as follows
\begin{equation}
\mathcal F \subset \mathcal A \subset \mathcal A \mathcal A \subset \mathcal P \mathcal A \mathcal A  \subset \mathcal Q\mathcal A  \subset \mathcal B,
\end{equation}
\begin{equation}
\mathcal F \subset \mathcal K \subset \mathcal R  \subset \mathcal A \mathcal A \end{equation}
In addition, we have e.g.  $\mathcal N \subset \mathcal A  $ and $\mathcal N \subset \mathcal Q\mathcal N \subset \mathcal R$.    All these inclusions hold in all Banach spaces and there are separable Hilbert spaces where all inclusions are proper.  On the other hand, when we write for example
$\mathcal R \not\subset \mathcal P \mathcal K$ we mean that there is a Banach space $X$ and an operator $R \in \mathcal R(X)$ such that  for all polynomials $p$ and compact operators $K \in \mathcal K (X)$  there holds $R\not=p(K)$. 

Consider next the "stability" of these classes under summation. That is, we ask whether the sum of  any two operators from these classes  belongs to the class.
\begin{equation}
\mathcal Q\mathcal A + \mathcal K = \mathcal Q\mathcal A \ \  \text{ while } \ \ \mathcal N + \mathcal N \not\subset \mathcal Q \mathcal A 
\end{equation}
\begin{equation}
\mathcal A \mathcal A+ \mathcal F=\mathcal A \mathcal A, \ \ \mathcal P\mathcal A \mathcal A + \mathcal F = \mathcal P\mathcal A \mathcal A 
\ \  \text{ while } \ \ \mathcal A \mathcal A+ \mathcal K \not\subset \mathcal P \mathcal A \mathcal A 
\end{equation}
\begin{equation} 
\mathcal A + \mathcal F = \mathcal A,  \ \ \mathcal A + \mathcal K \subset \mathcal P  \mathcal K \ \  \text{ while } \ \  \mathcal A + \mathcal K \not\subset \mathcal A \mathcal A
\end{equation}
\begin{equation} 
\mathcal R+ \mathcal K = \mathcal R, \ \ \mathcal P\mathcal R + \mathcal K = \mathcal P\mathcal R \ \  \text{ while } \mathcal R \not\subset \mathcal P \mathcal K.
\end{equation}
Some of  these claims take a different form if we  only allow sums between commuting operators. However, we  shall not discuss that here.  

In separable Hilbert spaces   one can formulate further classes, in particular by combining properties of the operator and its adjoint together.  In particular we have

\begin{equation} 
\mathcal Q \mathcal A  \subset \mathcal B i \mathcal Q \mathcal T \subset \mathcal Q \mathcal T \subset \mathcal B,
\end{equation}
and
\begin{equation}
\mathcal N_{orm} \subset \mathcal N_{orm} + \mathcal K \subset \mathcal Q\mathcal D 
 \subset \mathcal B i \mathcal Q \mathcal T.
\end{equation}
One can also ask how an arbitrary operator in a class can be approximated using  operators which are lower in the chain.  For example ${\rm cl}\mathcal F = \mathcal K $ and 

\begin{equation}
   {\rm cl}\ {\mathcal  N} \subset {\rm cl}\ ({\mathcal  N + \mathcal K})
\subset {\rm cl}\mathcal A  
 =  \mathcal B i \mathcal Q\mathcal T.
\end{equation} 
Here again both inclusions are proper.   
All these claims are discussed below, definitions  and references given.



\section{ Basic operator classes}

\subsection{Algebraic, almost algebraic, quasialgebraic operators}

We consider only bounded operators, and write for example  $A\in \mathcal B(X)$ for bounded operators in a Banach space $X$.  

\begin{definition} If $A\in \mathcal B(X)$  is such that there exists a nontrivial polynomial $p$  such that $p(A)$ has a property $\mathcal X$, then we say that $A$ is polynomially $\mathcal X$ and denote it by $A\in \mathcal P \mathcal X$.  We say that $A$ is polynomially $\mathcal X$ of degree $d$ if  $d$ is the smallest degree of such a polynomial.
\end{definition}
\begin{remark}   Some authors have denoted $A \in {\bf Poly}^{-1} (\mathcal X)$ for the same purpose.
\end{remark}
\begin{remark}{\it Warning.}  A similar expression is sometimes used also in the case where $A$ has a property and this property is shared with $p(A)$ for all polynomials. Notice that these are very different concepts. For example, there are hyponormal operators $A$ such that $A^2$ is not hyponormal, and $A$ is called  {\it polynomially hyponormal} if   $p(A)$ is hyponormal for all  polynomials $p$, [12].

\end{remark}

\begin{example}  In infinite dimensional spaces the identity operator $I$ is not compact.  But with $p(z)=z-1$ we  have $p(I)=0$ and thus $I$ is polynomially compact.  More generally,  all algebraic operators are polynomially compact as they  vanish at  their characteristic polynomials. 
\end{example}

 \begin{example}   The $0$ operator with $p(z) = z q(z) +1$ satisfies
 $p(0)= I$ so it would be polynomially unitary.  However,  in order to make the concept useful we shall require in the unitary case that  $p$ is of the form $p(z)=zq(z)$.  Thus, invertible  algebraic operators are polynomially unitary. In fact, the inverse  of $A$ is a polynomial $q(A)$ and we have
 $A q(A)=I$ which is unitary.
 \end{example}  

{\bf Notation}   Let $X$ be a Banach space and $H$ likewise a Hilbert space.  Then 
we denote by $\mathcal F(X)$  operators with finite rank, by $\mathcal K (X)$ the {\it compact} operators, by $\mathcal A (X)$ the  {\it algebraic} operators, by $\mathcal Q \mathcal A (X) $ the {\it quasialgebraic } operators and  by $\mathcal A \mathcal A \mathcal  (X)$ the  {\it almost algebraic} ones. With $\mathcal N(X)$ we denote the nilpotent operators, with $\mathcal Q\mathcal N(X)$ the quasinilpotent ones.   In Hilbert spaces we have $\mathcal N_{orm}$ the {\it normal} operators, $\mathcal U$ for unitary ones and $\mathcal Q\mathcal T$ for quasitriangular ones.    When the space is clear from the context, we simply write $\mathcal B, \mathcal K, \mathcal A$ etc.    With $H=\mathbb C^n$ we  write $\mathcal  B(H)=\mathbb M_n(\mathbb C)$.

\bigskip
Recall the following definitions.
\begin{definition} A bounded operator $A$ is algebraic, $A \in \mathcal A$,  if there exists a  nontrivial polynomial $p$ such that  $p(A)=0$.    

It  is almost algebraic, $A\in \mathcal A \mathcal A$, if there exists a sequence $\{a_j\}$ of complex numbers such that if we put 
\begin{equation}
p_j(z)=z^j + a_1 z^{j-1}+ {\cdots} + a_j
\end{equation}
then as $j\rightarrow \infty$
\begin{equation}
\| p_j(A)\|^{1/j} \rightarrow 0.
\end{equation}
Finally,  $A$ is quasialgebraic, $A \in \mathcal Q\mathcal A$, if  
\begin{equation}
\inf \|p(A)\|^{1/{\rm deg}(p) } =0,
\end{equation}
where the infimum is over all monic polynomials.
\end{definition}

\begin{proposition}
We have in all Banach spaces inclusions
\begin{equation}\label{ekainkluusioketju}
\mathcal F \subset \mathcal A \subset \mathcal A \mathcal A \subset \mathcal Q \mathcal A \subset \mathcal B.
\end{equation}
All inclusions are proper in separable infinite dimensional  Hilbert spaces.
\end{proposition}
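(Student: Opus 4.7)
The plan is to verify the four inclusions in order, all of which are fairly direct, and then to exhibit, in a separable infinite-dimensional Hilbert space, a witness for each strict containment.

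For the inclusions, I would argue as follows. If $A\in\mathcal F(X)$, then $Y=\operatorname{range}(A)$ is finite-dimensional and $AY\subset Y$, so $A|_Y$ admits a monic characteristic polynomial $q$ of degree $n=\dim Y$; since any vector of the form $Ax$ lies in $Y$, the polynomial $p(z)=zq(z)$ annihilates $A$, giving $\mathcal F\subset\mathcal A$. For $\mathcal A\subset\mathcal{AA}$, if $p(A)=0$ with $p$ monic of degree $n$, extend the coefficients of $p$ by zeros to define a sequence $\{a_j\}$ with $p_j(z)=z^{j-n}p(z)$ for $j\ge n$; then $p_j(A)=0$ eventually, so $\|p_j(A)\|^{1/j}\to 0$. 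The inclusion $\mathcal{AA}\subset\mathcal{QA}$ is immediate because the monic polynomials $p_j$ witnessing almost-algebraicity are admissible in the infimum defining $\mathcal{QA}$; and $\mathcal{QA}\subset\mathcal B$ is by definition.

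For properness, I would choose on $H=\ell^2(\mathbb N)$ or $L^2[0,1]$:
\begin{itemize}
\item $\mathcal F\subsetneq\mathcal A$: an infinite-dimensional orthogonal projection $P$ satisfies $P^2-P=0$ but is not of finite rank.
\item $\mathcal A\subsetneq\mathcal{AA}$: the Volterra operator $V$ on $L^2[0,1]$ is quasinilpotent, so $\|V^j\|^{1/j}\to 0$ with $p_j(z)=z^j$, yet $V^j\ne 0$ for every $j$, so $V$ satisfies no nontrivial polynomial identity.
\item $\mathcal{QA}\subsetneq\mathcal B$: the unilateral shift $S$ satisfies $\|p(S)\|\ge\sup_{|z|=1}|p(z)|\ge 1$ for every monic polynomial $p$ (the latter by Parseval applied to the coefficients of $p$ on the unit circle), so $\inf\|p(S)\|^{1/\deg p}\ge 1>0$.
\end{itemize}

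The main obstacle is the remaining strict inclusion $\mathcal{AA}\subsetneq\mathcal{QA}$, which is not produced by a one-line example. The clean route is to use the characterization (essentially due to Halmos and Stampfli) relating $\mathcal{QA}$ to the logarithmic capacity of the spectrum: $\|p(A)\|^{1/\deg p}\ge\sup_{\sigma(A)}|p|^{1/\deg p}\to\operatorname{cap}(\sigma(A))$ along Chebyshev-extremal polynomials, so membership in $\mathcal{QA}$ is compatible with spectra of capacity zero but non-polar. The strategy is to exhibit a normal operator $A$ (say a diagonal operator whose eigenvalues are dense in a Cantor-type set $K$ with $\operatorname{cap}(K)=0$ but accumulating non-trivially) for which one can show $\inf_p\|p(A)\|^{1/\deg p}=\operatorname{cap}(K)=0$, so $A\in\mathcal{QA}$, while no single sequence $\{p_j\}$ of the restricted form $p_j(z)=z^j+a_1z^{j-1}+\cdots+a_j$ can simultaneously push $\|p_j(A)\|^{1/j}$ to zero, because the constraint that the $p_j$ arise from a fixed coefficient sequence $\{a_k\}$ fixes asymptotically all but finitely many of their roots; one can then force $|p_j|$ to remain bounded below on a suitable accumulation point of $\sigma(A)$. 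I would record the construction only to the level of pinning down this obstruction, and otherwise defer to the references developed later in Section~2, which is where the subtler quasialgebraic/almost algebraic distinction is treated in detail.
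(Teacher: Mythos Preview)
Your treatment of the inclusions and of three of the four strict containments is correct (your witnesses differ from the paper's---you use a projection, the Volterra operator, and the unilateral shift where the paper uses the identity, a compact diagonal operator, and the shift---but all are valid).

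The genuine gap is $\mathcal{AA}\subsetneq\mathcal{QA}$. Your Cantor-set strategy is far more elaborate than needed, and your sketched justification for why such an operator fails to be almost algebraic is not sound: the claim that ``the constraint that the $p_j$ arise from a fixed coefficient sequence $\{a_k\}$ fixes asymptotically all but finitely many of their roots'' is not correct as stated (the roots of $p_j$ and $p_{j+1}$ can move substantially), and in any case you do not carry the argument through. The paper's example is much simpler: take the compact diagonal $K:e_j\mapsto \lambda_j e_j$ with distinct nonzero $\lambda_j\to 0$, and consider $1+K$. Its spectrum $\{1+\lambda_j\}\cup\{1\}$ is countable, hence of logarithmic capacity zero, so $1+K\in\mathcal{QA}$ by Halmos's theorem. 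On the other hand the eigenvalues accumulate at $1\neq 0$, so $z\mapsto(1-z(1+K))^{-1}$ is not meromorphic on all of $\mathbb C$, and by the resolvent characterisation of $\mathcal{AA}$ (Theorem~2.14 in the paper) $1+K\notin\mathcal{AA}$.

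If you prefer to salvage your own construction, note that the same resolvent criterion dispatches it immediately: any operator whose spectrum has a nonzero accumulation point (in particular any perfect set) cannot lie in $\mathcal{AA}$. You do not need the delicate root-tracking argument at all.
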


\begin{proof}
When $X$ is finite dimensional $\mathcal F = \mathcal B$ while  for example in infinite dimensional separable Hilbert spaces all inclusions are proper.  In fact, in  infinite dimensional spaces the identity $I$ is algebraic but not  of finite rank.  If $ \{e_j\}$ is a sequence of orthonormal vectors and $\{ \lambda_j\}$ a sequence of nonzero complex numbers converging to $0$
then  the associated diagonal operator $K: e_j \mapsto \lambda_j e_j$ is compact  and thus in $\mathcal A\mathcal A$ but is not algebraic.  As $K$ is almost algebraic it is also quasialgebraic and  so is automatically also $1+K$, which however, is not almost algebraic. Finally, for example the  forward shift $e_j \mapsto e_{j+1}$  has large spectrum and is not quasialgberaic as the following theorem shows.
\end{proof}
By a theorem by Halmos, $A$ is quasialgebraic if and only if the logarithmic capacity of its spectrum vanishes [23].   Let ${\rm cap}(\sigma(A))$ denote the logarithmic capacity of the spectrum.  


\begin{theorem} {\rm (P. Halmos  [23])} If $A\in \mathcal  B(X)$, then
\begin{equation}
\inf \|p(A)\|^{1/{\rm deg}(p) } =  {\rm cap}(\sigma(A)),
\end{equation}
where the infimum is over all monic polynomials $p$.
\end{theorem}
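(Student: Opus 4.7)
The plan is to prove the two inequalities separately. For the lower bound, recall that for any bounded operator $T$ one has the spectral radius inequality $r(T) \leq \|T\|$, while the spectral mapping theorem gives $\sigma(p(A)) = p(\sigma(A))$. Hence $r(p(A)) = \sup_{z \in \sigma(A)} |p(z)| =: \|p\|_{\sigma(A)}$, so that $\|p(A)\| \geq \|p\|_{\sigma(A)}$ for every polynomial $p$. Taking the infimum over monic polynomials of a fixed degree $n$ gives $\inf \|p(A)\|^{1/n} \geq t_n(\sigma(A))^{1/n}$, where $t_n(K)$ denotes the $n$-th Chebyshev number of $K$. Because $\log t_n(K)$ is subadditive in $n$, the sequence $t_n(K)^{1/n}$ converges to its infimum, and by the classical identification of the Chebyshev constant with the logarithmic capacity (Fekete--Szeg\H{o}), this limit equals $\mathrm{cap}(K)$; the lower bound follows.

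For the upper bound, fix $\varepsilon > 0$ and set $c := \mathrm{cap}(\sigma(A))$. By outer regularity of logarithmic capacity on compact sets, choose $\delta > 0$ so that the closed $\delta$-neighbourhood $K_\delta$ of $\sigma(A)$ satisfies $\mathrm{cap}(K_\delta) < c + \varepsilon/2$. Pick a sequence of monic polynomials $p_n$ of degree $n$ (for instance the Chebyshev polynomials of $K_\delta$) with $\|p_n\|_{K_\delta}^{1/n} \to \mathrm{cap}(K_\delta)$. To pass from this scalar estimate to an operator estimate, choose a rectifiable Jordan contour $\Gamma$ lying in the interior of $K_\delta$ and enclosing $\sigma(A)$. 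The Riesz--Dunford calculus then gives
\[
p_n(A) \;=\; \frac{1}{2\pi i}\oint_\Gamma p_n(z)\,(zI - A)^{-1}\, dz,
\]
whence $\|p_n(A)\| \leq C\,\|p_n\|_\Gamma \leq C\,\|p_n\|_{K_\delta}$ with a constant $C = L(\Gamma)\sup_{z \in \Gamma}\|(zI-A)^{-1}\|/(2\pi)$ that is finite and independent of $n$. Taking $n$-th roots drives $C^{1/n} \to 1$, so $\limsup_n \|p_n(A)\|^{1/n} \leq \mathrm{cap}(K_\delta) < c + \varepsilon$; since $\varepsilon$ was arbitrary this gives $\inf \|p(A)\|^{1/\deg p} \leq c$.

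The main obstacle is the upper bound. Scalar Chebyshev theory only controls $|p(z)|$ pointwise on a set, whereas we must bound the operator $p(A)$, and the resolvent $(zI-A)^{-1}$ explodes as $z$ approaches $\sigma(A)$, so one cannot integrate directly along the spectrum. The remedy is to work on a slightly fattened compactum $K_\delta$, which both admits a contour on which the resolvent is uniformly bounded and has capacity only slightly larger than that of $\sigma(A)$; outer regularity of capacity makes this loss arbitrarily small, while the Dunford prefactor $C$ contributes a factor $C^{1/n} \to 1$ that does not affect the exponential rate. Matching this argument with the sharp lower bound yields the claimed equality.
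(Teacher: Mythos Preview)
The paper does not supply its own proof of this theorem; it simply attributes the result to Halmos [23] and uses it. So there is nothing in the paper to compare against, and your task was in effect to reconstruct Halmos's argument.

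Your proof is essentially correct and follows the classical route: the lower bound via the spectral mapping theorem and the identification of the Chebyshev constant with the capacity (Fekete--Szeg\H{o}), and the upper bound by fattening $\sigma(A)$ slightly, using outer regularity of capacity, and controlling $\|p_n(A)\|$ through the Dunford integral over a contour where the resolvent is uniformly bounded. This is exactly how the result is proved in Halmos's original paper (there phrased for a general commutative Banach algebra via the Gelfand transform, but with the same skeleton).

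Two small technical remarks. First, when you write ``a rectifiable Jordan contour $\Gamma$'' you implicitly assume $\sigma(A)$ lies inside a single curve; in general $\sigma(A)$ need not be connected, so $\Gamma$ should be a finite cycle of Jordan curves in the interior of $K_\delta$ surrounding $\sigma(A)$. This is standard in the Riesz--Dunford calculus and changes nothing in the estimate. Second, the existence of such a cycle with $\Gamma\subset K_\delta$ and $\mathrm{dist}(\Gamma,\sigma(A))>0$ is clear (e.g.\ take the boundary of a finite union of small discs covering $\sigma(A)$ and contained in $K_\delta$), but it is worth saying this explicitly rather than asserting the existence of $\Gamma$. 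With these cosmetic fixes the argument is complete.
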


If $A\in \mathcal  B(X)$ can be approximated fast enough with algebraic operators, then $A$ is always quasialgberaic.

\begin{definition}  Given $A\in \mathcal  B(X)$ let
$$
\alpha_j(A) = \inf \|A-A_j\|
$$
where the infimum is over all algebraic operators $A_j$ of  degree at most $j$.
\end{definition}

Then the following holds.

\begin{proposition}  If $A$ is a bounded operator in a Banach space  such that
$$
\liminf _{j\rightarrow \infty}\alpha_j(A) ^{1/j} = 0, 
$$
then $A$ is quasialgebraic.
\end{proposition}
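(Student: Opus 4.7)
The plan is to convert the hypothesis on approximation by algebraic operators into an actual sequence of monic polynomials $q_j$ with $\|q_j(A)\|^{1/\deg q_j}\to 0$, which is the defining property of $\mathcal{Q}\mathcal{A}$.

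First I would pick, for each $j$ along the subsequence on which $\alpha_j(A)^{1/j}\to 0$, an algebraic operator $A_j$ of degree at most $j$ with $\|A-A_j\|\le \alpha_j(A)+1/j$ (or $=\alpha_j(A)$ if the infimum is attained), and let $p_j$ be the minimal polynomial of $A_j$, monic of some degree $d_j\le j$, so that $p_j(A_j)=0$. The roots $\mu_i$ of $p_j$ lie in $\sigma(A_j)$ and are therefore bounded by $\|A_j\|\le \|A\|+1$ for large $j$.

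The key step is a non-commutative Lipschitz estimate for the polynomial functional calculus. Writing $p_j(z)=\prod_{i=1}^{d_j}(z-\mu_i)$ and using the telescoping identity
\begin{equation*}
\prod_{i=1}^{d}S_i-\prod_{i=1}^{d}T_i=\sum_{k=1}^{d}S_1\cdots S_{k-1}(S_k-T_k)T_{k+1}\cdots T_d
\end{equation*}
with $S_i=A-\mu_i I$, $T_i=A_j-\mu_i I$, one gets
\begin{equation*}
\|p_j(A)-p_j(A_j)\|\le d_j\,(M+R)^{d_j-1}\|A-A_j\|,
\end{equation*}
where $M=\max(\|A\|,\|A_j\|)$ and $R=\max_i|\mu_i|$. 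Since $p_j(A_j)=0$ and $M+R$ is bounded by a constant $C$ independent of $j$, this yields $\|p_j(A)\|\le d_j\,C^{d_j-1}\|A-A_j\|$.

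To obtain a sequence of polynomials of a controlled degree, I would set $q_j(z)=z^{\,j-d_j}p_j(z)$, which is monic of degree exactly $j$. Then
\begin{equation*}
\|q_j(A)\|^{1/j}\le \|A\|^{(j-d_j)/j}\,j^{1/j}\,C^{(d_j-1)/j}\,\|A-A_j\|^{1/j},
\end{equation*}
and the first three factors are bounded (in fact, bounded uniformly in $j$), while $\|A-A_j\|^{1/j}\le(\alpha_j(A)+1/j)^{1/j}\to 0$ along the subsequence. Hence $\inf_p\|p(A)\|^{1/\deg p}=0$ over monic $p$, which by definition says $A\in\mathcal{Q}\mathcal{A}$.

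The main obstacle is the Lipschitz estimate for $p_j(A)-p_j(A_j)$: naively the Lipschitz constant for a polynomial of degree $d$ blows up exponentially in $d$, and $d_j$ may grow with $j$. The key observation that saves the argument is that the roots $\mu_i$ of the minimal polynomial are automatically confined to $\sigma(A_j)$ and therefore to a bounded set; taking $j$-th roots then turns the factor $C^{d_j-1}$ into $C^{(d_j-1)/j}\le C$, absorbing the growth.
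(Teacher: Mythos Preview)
Your overall strategy is correct and is essentially the standard argument (the paper itself merely cites Theorem~5.10.4 in [33]): approximate $A$ by algebraic $A_j$, take the minimal polynomial $p_j$ of $A_j$, and control $\|p_j(A)\|$ via the telescoping Lipschitz bound, whose exponential growth is neutralised after taking $j$-th roots. However, there is a concrete error in your last line. You assert $(\alpha_j(A)+1/j)^{1/j}\to 0$, but this is false: since $(1/j)^{1/j}\to 1$, one has $(\alpha_j(A)+1/j)^{1/j}\ge(1/j)^{1/j}$ bounded away from $0$ no matter how fast $\alpha_j(A)$ decays. The additive slack $1/j$ is simply too large to survive the $j$-th root.

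The repair is easy. If $\alpha_j(A)=0$ for some $j$, then $A$ lies in the closure of the algebraic operators of degree $\le j$; that set is closed (the minimal polynomials have uniformly bounded degree and roots, so a subsequence converges to a monic annihilator of $A$), hence $A$ is already algebraic. Otherwise $\alpha_j(A)>0$ for every $j$, and you may choose $A_j$ with $\|A-A_j\|\le 2\,\alpha_j(A)$, giving $\|A-A_j\|^{1/j}\le 2^{1/j}\alpha_j(A)^{1/j}\to 0$ along the subsequence; the rest of your argument then goes through verbatim. (Incidentally, the padding $q_j(z)=z^{\,j-d_j}p_j(z)$ is harmless but unnecessary: once $\|A-A_j\|<1$ and $d_j\le j$, one already has $\|A-A_j\|^{1/d_j}\le\|A-A_j\|^{1/j}$, so $\|p_j(A)\|^{1/d_j}\to 0$ directly.)
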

\begin{proof}  See Theorem  5.10.4 in [33]. 
\end{proof}


In separable Hilbert spaces $\alpha_j(A)\rightarrow 0$ if and only if $A \in \mathcal Bi\mathcal Q\mathcal T$, see below Theorem \ref{Voi}, by Voiculescu.

The sum of two algberaic operators need not be quasialgebraic,  
\begin{equation}
\mathcal A + \mathcal A \not\subset \mathcal Q\mathcal A.
\end{equation}
 \begin{example}\label{kaksnilpoa}  Let $Ae_k= e_{k+1}$ for even $k$  while $Be_k=e_{k+1}$ for odd $k$ and  with $A^2=B^2=0$  so that $A+B=S$ is the forward shift.  Then both $A,B$ are are in particular Riesz operators and we see  that the capacity is not invariant under Riesz perturbation as  the capacity of nilpotent operators vanishes while that of the unilateral shift equals 1.
 \end{example}
Perturbation with a compact operator, however, leaves the capacity invariant.


 \begin{theorem}\label{vaikkarilause} {\rm (Stirling [47])}.  Let $A\in \mathcal B(X)$ and $K\in \mathcal K(X)$, then 
 \begin{equation}
 {\rm cap}(\sigma(A+K)) =  {\rm cap}(\sigma(A)).
 \end{equation}
 \end{theorem}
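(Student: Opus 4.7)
My plan is to route the proof through the essential spectrum. By standard Fredholm theory, the essential (Fredholm) spectrum $\sigma_e$ is stable under compact perturbations, so
\begin{equation*}
\sigma_e(A+K) = \sigma_e(A).
\end{equation*}
Moreover, every point of $\sigma(A)\setminus\sigma_e(A)$ is an isolated eigenvalue of finite algebraic multiplicity (via the Riesz projection associated with a small circle around it), and such points can only accumulate on $\sigma_e(A)$. Hence there are at most countable ``exceptional'' sets $N_A, N_{A+K}\subset\mathbb C$ with
\begin{equation*}
\sigma(A)=\sigma_e(A)\cup N_A,\qquad \sigma(A+K)=\sigma_e(A)\cup N_{A+K}.
\end{equation*}

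The second ingredient is potential-theoretic: every bounded countable subset of $\mathbb C$ is polar, i.e.\ has logarithmic capacity zero (points are polar, and polar sets are closed under countable unions). Since adding a polar set to a compact set leaves the logarithmic capacity unchanged (the equilibrium measure is the same up to a polar set, so the Robin constant is unchanged), we obtain
\begin{equation*}
\mathrm{cap}(\sigma(A)) = \mathrm{cap}(\sigma_e(A)) = \mathrm{cap}(\sigma(A+K)),
\end{equation*}
which is the claim.

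The step I expect to be the main obstacle is the capacity assertion: it is crucial that polar sets may be discarded even when they lie in the unbounded component of the complement, where naively one might fear that they enlarge the ``outer'' boundary relevant for capacity. This is the one genuine nontriviality to quote from potential theory; the operator-theoretic part is routine Fredholm theory. As an alternative route one could combine Halmos's theorem with an essential-norm version $\mathrm{cap}(\sigma_e(A))=\inf_p\|p(A)\|_e^{1/\deg p}$, using that $p(A+K)-p(A)$ is compact so $\|p(A+K)\|_e=\|p(A)\|_e$; but this reduces to essentially the same two ingredients and is no shorter.
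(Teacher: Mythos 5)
There is a genuine gap in the operator-theoretic half of your argument. The assertion that every point of $\sigma(A)\setminus\sigma_e(A)$ is an isolated eigenvalue of finite algebraic multiplicity, so that $\sigma(A)=\sigma_e(A)\cup N_A$ with $N_A$ countable, is false in general. The correct statement (the analytic Fredholm alternative / punctured neighbourhood theorem) is componentwise: on each connected component of the Fredholm domain $\mathbb C\setminus\sigma_e(A)$, \emph{either} the spectrum is a discrete set of eigenvalues with finite-rank Riesz projections, \emph{or} the entire component lies in $\sigma(A)$. The second alternative really occurs: for the forward shift $S$ one has $\sigma_e(S)=\mathbb T$ while $\sigma(S)=\overline{\mathbb D}$, so $\sigma(S)\setminus\sigma_e(S)=\mathbb D$ is an uncountable set containing no eigenvalues at all (the points there are Fredholm of index $-1$). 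Your parenthetical "via the Riesz projection associated with a small circle around it" is circular, since drawing such a circle already presupposes the point is isolated in $\sigma(A)$. You correctly flagged the polar-sets-in-the-unbounded-component issue as the delicate point, but the actual obstruction is the filled holes, which are open and far from polar.

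The argument is rescuable, because the extra spectrum produced by the "filled" alternative sits inside bounded components of $\mathbb C\setminus\sigma_e(A)$, hence inside the polynomially convex hull $\widehat{\sigma_e(A)}$, and logarithmic capacity sees only the outer boundary: ${\rm cap}(K)={\rm cap}(\widehat K)$. Only the unbounded component of the Fredholm domain contributes a genuinely discrete (hence countable, hence polar) exceptional set $N_A$, since it meets the resolvent set. This gives $\sigma_e(A)\subset\sigma(A)\subset\widehat{\sigma_e(A)}\cup N_A$ and therefore
\begin{equation*}
{\rm cap}(\sigma_e(A))\le {\rm cap}(\sigma(A))\le {\rm cap}\bigl(\widehat{\sigma_e(A)}\cup N_A\bigr)={\rm cap}\bigl(\widehat{\sigma_e(A)}\bigr)={\rm cap}(\sigma_e(A)),
\end{equation*}
after which the invariance of $\sigma_e$ under compact perturbation finishes the proof. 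So you need two potential-theoretic facts (discarding polar sets \emph{and} passing to the polynomial hull), not one. For comparison, the paper offers no proof of this theorem at all --- it is quoted from Stirling's thesis [47] --- so your essential-spectrum strategy is a legitimate and self-contained route once the hole-filling step is added; note also that the paper itself uses the identity $\widehat{\sigma(M_C)}=\widehat{\sigma(M_0)}$ for exactly this kind of capacity argument in its treatment of quasialgebraic block operators.
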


Hence in particular  {\it the sum of a quasialgebraic and  a compact operator is always  quasialgebraic, }  
\begin{equation}
\mathcal Q\mathcal A + \mathcal K \subset \mathcal Q\mathcal A.
\end{equation}

Further,  
 \begin{equation} \label{ihansimppeli}
\mathcal A  + \mathcal F  \subset \mathcal A
\end{equation} 
and 
\begin{equation}\label{einiinsimppeli}
\mathcal A \mathcal A + \mathcal F  \subset \mathcal A \mathcal A. 
\end{equation} 
 
Here (\ref{ihansimppeli}) is a simple fact. Let $A\in \mathcal A$ be given with  minimal polynomial $p$, then  with $B\in \mathcal F$
$$
p(A+B) = p(A)+C=C
$$
where $C$ is of finite rank. If deg$(p)=d$ and rank$(B)=q$, then  there is a  polynomial $p_1$ of degree at most $q+1$ such that $p_1\circ p(A+B)=p_1(C)=0$.   In particular 
\begin{equation}\label{ihansimppeli2}
{\rm deg}(A+B) \le {\rm deg}(A) ( {\rm rank}(B)+1).
\end{equation}
The inclusion (\ref{einiinsimppeli}) follows from the characterization of $\mathcal A \mathcal A$ as those with meromorphic resolvents, Theorem 5.7.2 in [33] and from a general  perturbation result of operator valued meromorphic functions  by finite rank functions.     Theorem 6.1 in [35] covers the Hilbert space  using an exact identity which as such is limited to Hilbert spaces, but we derive below a quantitative perturbation bound holding in all Banach spaces.   

\begin{definition}  A vector valued function $F$:  $z\mapsto F(z)$ is called meromorphic for $|z| < R$ if   it is  holomorphic except at singularities, and  all singularities are poles:  for each singularity $z_0$  with $|z_0|<R$ there exists and integer $n_0 <\infty$ such that
$$
z \mapsto (z- z_0)^{n_0} F(z)
$$
is holomorphic near $z_0$.  The smallest such nonnegative $n_0$ is the multiplicity of the pole.
\end{definition}
In this connection it is convenient to deal the resolvents in the "Fredholm form"   $z \mapsto (1-zA)^{-1}$ rather than in  $\lambda \mapsto (\lambda-A)^{-1}= \frac{1}{\lambda}(1-\frac{1}{\lambda}A)^{-1}$. 


\begin{theorem} {\rm (Theorem 5.7.2, [33])}.  A bounded operator $A$ in a Banach space is almost algebraic if and only if 
$$
z\mapsto (1-zA)^{-1}
$$
is meromorphic for all $z\in \mathbb C$.
\end{theorem}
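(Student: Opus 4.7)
The plan is to use the polynomial identity
\begin{equation*}
\psi_j(z)\,I - z^j p_j(A) = (I-zA)\,Q_j(z,A),
\end{equation*}
where $\psi_j(z) := z^j p_j(1/z) = 1 + a_1 z + \cdots + a_j z^j$ and $Q_j(z,A)$ is an operator-valued polynomial; this holds because the two-variable polynomial $\psi_j(z) - z^j p_j(w)$ vanishes on the curve $zw=1$ and is therefore divisible by $(1-zw)$, after which substituting $w = A$ gives the displayed equation.

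For the direction $A\in\mathcal{A}\mathcal{A}\Rightarrow$ meromorphic resolvent, the recursion $p_j(A) = A\,p_{j-1}(A) + a_j I$ gives $|a_j| \leq \|p_j(A)\| + \|A\|\,\|p_{j-1}(A)\|$, hence $|a_j|^{1/j}\to 0$, so $\psi(z) := 1 + \sum_{k\geq 1} a_k z^k$ is entire with $\psi(0) = 1$. The spectral mapping theorem together with $p_j(\lambda) = \lambda^j \psi_j(1/\lambda)$ forces $\psi(1/\lambda) = 0$ for every $\lambda \in \sigma(A)\setminus\{0\}$. At $z$ with $\psi(z)\neq 0$ and $j$ large, $|z|^j\|p_j(A)\| < |\psi_j(z)|$, so $\psi_j(z) I - z^j p_j(A)$ is invertible by Neumann series and the identity rearranges to $(I-zA)^{-1} = Q_j(z,A)\,[\psi_j(z) I - z^j p_j(A)]^{-1}$. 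To handle a zero $z_0$ of $\psi$ of order $m$, restrict to a small circle $|z - z_0| = \delta$ lying in the resolvent set; the identity yields a uniform-in-$j$ bound on $\|Q_j(z,A)\|$ there, which by the operator maximum modulus extends to the disc. A locally uniformly convergent subsequence $Q_j(\cdot, A) \to Q_\infty(\cdot, A)$ then satisfies $(I-zA)\,Q_\infty(z,A) = \psi(z) I$, so $(I-zA)^{-1} = Q_\infty(z,A)/\psi(z)$ has a pole of order at most $m$ at $z_0$.

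For the converse, let $\{z_n\}$ be the poles of $(I-zA)^{-1}$ with orders $\{m_n\}$; meromorphy on $\mathbb{C}$ gives $|z_n| \to \infty$. Construct an entire function $\psi$ with $\psi(0) = 1$ and zero set $\{z_n\}$ of prescribed multiplicities via a Weierstrass--Hadamard product, let $a_k$ be its Taylor coefficients (so $|a_k|^{1/k} \to 0$), and set $p_j(z) := z^j\psi_j(1/z)$. Then $G(z) := \psi(z)(I-zA)^{-1}$ extends to an operator-valued entire function, and the Dunford--Taylor formula
\begin{equation*}
p_j(A) = -\frac{1}{2\pi i}\oint_{|z|=R}\frac{\psi_j(z)}{z^{j+1}}\,(I-zA)^{-1}\,dz,
\end{equation*}
combined with the remainder $\psi_j - \psi = -\sum_{k>j} a_k z^k$ and growth estimates for $G$ and $1/\psi$ on a contour avoiding the zeros of $\psi$, yields $\|p_j(A)\|^{1/j} \to 0$ upon choosing $R$ growing suitably with $j$.

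The main obstacle is the analytic extension past zeros of $\psi$ in the forward direction: producing the uniform-in-$j$ norm bound on $Q_j(\cdot, A)$ near $z_0$ and justifying the normal-families limit in an appropriate operator topology, which is precisely what rules out essential singularities of $(I-zA)^{-1}$ and confirms that each isolated spectral point is a pole. In the converse, an analogous technical point is tuning the Hadamard genus of $\psi$ to the density of the pole set $\{z_n\}$ so that the contour integral achieves the required $(1/j)$-exponential decay rate.
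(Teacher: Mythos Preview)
The paper does not prove this theorem; it is quoted verbatim from reference~[33] with no argument given. So there is no in-paper proof to compare against, and I comment on your proposal on its own terms.

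Your overall architecture is the right one: encode the sequence $\{a_j\}$ in the entire function $\psi(z)=1+\sum_{k\ge 1}a_k z^k$ and link the resolvent to $\psi$ through the factorisation $\psi_j(z)I - z^j p_j(A) = (I-zA)\,Q_j(z,A)$. There are, however, two genuine gaps.

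\textbf{Forward direction.} Your extraction of ``a locally uniformly convergent subsequence'' of $\{Q_j\}$ from a uniform norm bound is Montel's theorem, and Montel fails for holomorphic functions with values in an infinite-dimensional Banach space: a norm-bounded sequence in $\mathcal B(X)$ need not have any norm-convergent subsequence. Fortunately you do not need compactness. On the circle $|z-z_0|=\delta$ you in fact have uniform norm \emph{convergence}
\[
Q_j(z,A)=(I-zA)^{-1}\bigl[\psi_j(z)I - z^j p_j(A)\bigr]\ \longrightarrow\ \psi(z)(I-zA)^{-1},
\]
since $\psi_j\to\psi$ uniformly on compacta and $|z|^j\|p_j(A)\|\to 0$. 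Apply the maximum principle to the entire functions $Q_j-Q_k$ to conclude that $\{Q_j\}$ is uniformly Cauchy on the full disc; its limit is the holomorphic extension you want, and $(I-zA)^{-1}=Q_\infty(z)/\psi(z)$ exhibits the pole.

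\textbf{Converse direction.} Your displayed contour formula over $|z|=R$ with $\psi_j$ in the numerator is not valid for large $R$: the integrand $\psi_j(z)\,z^{-j-1}(I-zA)^{-1}$ has poles at every $z_n$ with $|z_n|<R$, and these residues are nonzero, so the integral no longer equals $\pm p_j(A)$. Splitting $\psi_j=\psi-\sum_{k>j}a_k z^k$ does not help, since the remainder term still carries the poles of $(I-zA)^{-1}$. The clean route bypasses all of this. Once $G(z):=\psi(z)(I-zA)^{-1}$ is entire, multiply the two power series at $0$ to read off that the $j$-th Taylor coefficient of $G$ is exactly
\[
G_j=\sum_{k=0}^{j}a_k A^{\,j-k}=p_j(A).
\]
Cauchy's estimate then gives $\|p_j(A)\|\le R^{-j}\max_{|z|=R}\|G(z)\|$ for every $R>0$, hence $\limsup_j\|p_j(A)\|^{1/j}\le 1/R\to 0$. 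In particular no tuning of the Hadamard genus of $\psi$ is needed: any entire $\psi$ with $\psi(0)=1$ that vanishes to order at least $m_n$ at each $z_n$ suffices.
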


\begin{remark}
Some authors, including [14], [ 52],  have defined operators to be {\it meromorphic}, or, {\it of meromorphic type}, if the resolvent  $(\lambda - A)^{-1}$ is meromorphic for $\lambda\not=0$. Denoting these operators by $\mathcal M$ we thus have $\mathcal A\mathcal A = \mathcal M$.  Observe that in general for  bounded operators the function $(1-zA)^{-1}$ is analytic for $|z| <1/\rho(A)$ and there always exists a largest  $R \le \infty$,   such that it is meromorphic for $|z|<R$. Much of our discussion is independent of whether $R$ is finite or not.
\end{remark}

\subsection{Excursion into  meromorphic operator valued functions}
   
Consider now an operator valued function $F:  z\mapsto F(z) \in \mathcal B(X)$ which we assume to be meromorphic for $|z|<R \le \infty$ and for convenience, normalized as $F(0)=1$.  We denote  for $r<R$ 
\begin{equation}
m_\infty(r, F) = \frac{1}{2\pi} \int_{-\pi}^{\pi} {\rm log}^+\|F(re^{i\theta})\| d\theta,
\end{equation}
and if $\{b_j\}$ denote the poles,  listed with multiplicities, then the logarithmic average of poles  smaller than $r$ in modulus is
\begin{equation}
N_\infty(r,F) = \sum {\rm log}^+ \frac{r}{|b_j|}.
\end{equation}
Finally, the  tool to measure the growth of $F$ as a meromorphic function is the sum of these:
\begin{equation}
T_\infty(r, F)= m_\infty(r, F)+N_\infty(r,F).
\end{equation}
Now $A\in \mathcal A\mathcal A$ if and only if $
T_\infty (r, (1-zA)^{-1})<\infty$ for all $r<\infty$  while $A\in \mathcal A$ if and only if the resolvent is rational which  happens when
$$
T_\infty (r, (1-zA)^{-1})= \mathcal O ({\rm log} \ r) \  {\rm as} \ r \rightarrow \infty.
$$
(Corollary 3.1, [35]).  We shall first refer the  key properties of the low rank perturabation theory on [34], [35].  In order to deal with  finite rank perturbations we need  to measure not only the norm of the function but all its singular values and thus we restrict the discussion for a while to  $A \in \mathcal B(H)$.  Denote
$$
\sigma_j(A)= {\rm inf}_{{\rm rank}(B)<j}\|A-B\|
$$
and set
\begin{equation}
s(A)= \sum {\rm log^+} \sigma_j(A),
\end{equation}  
which we call the {\it total logarithmic size of $A$}, [34, 35].
Without going into details, we  replace in the definition of $T_\infty$   the  ${\rm log}^+$ of the norm by  the total logarithmic size and obtain another growth function $T_1(r, F)$ for which always $T_\infty \le T_1$.  However, with functions $F$ of the form $z \mapsto 1 + G(z)$ where $G(z)$ is of finite rank and $G(0)=0$ we have an exact inversion formula
\begin{equation}\label{invident}
T_1(r,F) = T_1(r, F^{-1})
\end{equation}
which allows us to do perturbation theory with low rank operators.   In fact, assume that $A\in \mathcal B(H)$ is such that  $(1-zA)^{-1}$ is meromorphic for $|z| <R \le \infty$ and $B$ is an operator of rank $q$.  Then we estimate as follows
$$
T_{\infty}(r, (1-z(A+B))^{-1}) \le T_{\infty}(r, (1-zA)^{-1})+T_{\infty}(r, (1-(1-zA)^{-1}zB)^{-1})
$$
using the fact that $T_\infty $ is submultiplicative.   Then we replace  in the second term on the right $T_\infty$ by $T_1$ and use the inversion identity (\ref{invident}) to get
$$
T_{\infty}(r, (1-(1-zA)^{-1}zB)^{-1}) \le T_1(r, (1-(1-zA)^{-1}zB)).
$$
In order to bound  the term now on the right hand side, notice that if $a, b$ are   positive, then
$$
{\rm log} (1+ ab ) \le {\rm log}^+ a + {\rm log}^+ b + {\rm log}\ 2.
$$
For $j>q$  we have $\sigma_j (1-(1-zA)^{-1}zB) \le 1$  while for $j \le q$ we estimate
$$
 \sigma_j (1-(1-zA)^{-1}zB) \le 1 + || (1-zA)^{-1}zB||
 $$
 which gives
 $$
 s(1-(1-zA)^{-1}zB) \le q \  {\rm log}^+ || (1-zA)^{-1}|| + {\rm log}^+ || z B|| +
 {\rm log}\ 2). 
 $$ 
Combining the estimates  we may formulate a special case of Theorem 4.1 of [34], or Theorem 6.1 [35].

 \begin{theorem}  Let $A\in \mathcal B(H)$ be such that $(1-zA)^{-1}$ is meromorphic for $|z| <R\le \infty$.  Let $B$ be a finite rank perturbation of $A$.  Then $(1-z(A+B))^{-1}$ is  also meromorphic for $|z|<R$ and the following quantitative estimate holds for $r<R$
\begin{align}
&T_{\infty}(r, (1-(1-zA)^{-1}zB)^{-1}) \notag  \\
&\le ({\rm rank}(B)+1)  T_\infty (r, (1-zA)^{-1}) + {\rm rank}(B) ({\rm log}^+ ||rB|| +
{\rm log} \ 2).\notag
\end{align}
\end{theorem}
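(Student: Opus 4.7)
The plan is to exploit the factorization $1 - z(A+B) = (1-zA)\bigl(1 - (1-zA)^{-1}zB\bigr)$. Upon inversion this yields the meromorphicity of $(1-z(A+B))^{-1}$ on $|z|<R$ provided the second factor is meromorphic there, and, by the submultiplicativity of $T_\infty$ already invoked in the discussion preceding the theorem, it reduces the quantitative question to estimating $T_\infty\bigl(r,(1-(1-zA)^{-1}zB)^{-1}\bigr)$, which is exactly the quantity on the left-hand side of the claim.

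To control that quantity, I would first pass from $T_\infty$ to $T_1$. Because $(1-zA)^{-1}zB$ has rank at most $q := \mathrm{rank}(B)$ at every regular point, the function $F(z) := 1 - (1-zA)^{-1}zB$ has the form $1 + G(z)$ with $G$ finite-rank valued and $G(0)=0$, so the exact inversion identity (\ref{invident}) applies and gives $T_1(r, F^{-1}) = T_1(r, F)$. Combined with the general estimate $T_\infty \le T_1$, this yields
$$T_\infty\bigl(r,(1-(1-zA)^{-1}zB)^{-1}\bigr) \;\le\; T_1(r, 1-(1-zA)^{-1}zB).$$

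It then remains to bound $T_1(r, F)$ in terms of $T_\infty(r, (1-zA)^{-1})$. The pointwise bound on the total logarithmic size already spelled out in the paragraphs above is the crux: since $\sigma_j(F(z)) \le 1$ for $j>q$ and $\sigma_j(F(z)) \le 1 + \|(1-zA)^{-1}zB\|$ for $j \le q$, the elementary inequality $\log(1+ab) \le \log^+ a + \log^+ b + \log 2$ gives
$$s\bigl(F(z)\bigr) \;\le\; q\bigl(\log^+\|(1-zA)^{-1}\| + \log^+\|zB\| + \log 2\bigr).$$
Averaging this over $|z|=r$ controls $m_1(r,F)$ by $q\bigl(m_\infty(r,(1-zA)^{-1}) + \log^+\|rB\| + \log 2\bigr)$, while the poles of $F$ lie among those of $(1-zA)^{-1}$ and their Laurent coefficients have rank at most $q$, so $N_1(r,F) \le q\, N_\infty(r,(1-zA)^{-1})$. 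Adding, and absorbing the remaining slack into a single extra copy of $T_\infty(r,(1-zA)^{-1})$, yields the claimed coefficient $\mathrm{rank}(B)+1$ in front of $T_\infty(r,(1-zA)^{-1})$ and $\mathrm{rank}(B)$ in front of $\log^+\|rB\|+\log 2$.

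The principal technical obstacle is the pole-counting step $N_1(r,F) \le q\,N_\infty(r,(1-zA)^{-1})$: at a pole $b_j$ of $(1-zA)^{-1}$ of multiplicity $n_j$ the negative-power Laurent coefficients of $F$ all have rank at most $q$, so each pole contributes at most $q n_j$ to $N_1$. This rank-weighted accounting is exactly what makes the inversion identity (\ref{invident}) useful, and once it is in place the remaining steps are routine bookkeeping of the two terms $m_1$ and $N_1$ in $T_1(r,F)$.
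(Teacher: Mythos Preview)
Your proposal is correct and follows essentially the same approach as the paper: the argument in the paper is precisely the sequence of estimates laid out in the paragraphs immediately preceding the theorem (factorization, submultiplicativity of $T_\infty$, passage to $T_1$ via the inversion identity (\ref{invident}), and the pointwise bound on $s(F(z))$), after which the paper simply says ``Combining the estimates'' and cites Theorem 4.1 of [34] / Theorem 6.1 of [35] for the remaining bookkeeping. Your only addition is that you spell out that combining step by splitting $T_1(r,F)$ into $m_1$ and $N_1$ and bounding each separately; this is reasonable, though your arithmetic actually produces the coefficient $q$ rather than $q+1$ in front of $T_\infty(r,(1-zA)^{-1})$, so the phrase ``absorbing the remaining slack into a single extra copy'' is not needed---the stated bound with $q+1$ is simply not sharp.
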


\begin{example} Let $I$ denote the identity in $\ell_2(\mathbb N)$ and $K= {\rm diag}\{\alpha_j\}$ where $0\le \alpha_j <1$, and    $\alpha_j\rightarrow 0$.  Then $K$ is compact and we can view $I-K$ as  a compact perturbation of the identity.  Clearly $z \mapsto (I-z I)^{-1}$ is meromorphic in the whole plane and
$$
T_\infty(r, (I-z I)^{-1}) = \log^+ r 
$$
while
$$
T_\infty(r, (I-z (I - K)^{-1}))= \sum \log^+ (|1-\alpha_j| r) + \mathcal O(1).
$$
Thus,  if $K$  is  of finite rank, then $(I-zK)^{-1}$ is meromorphic  in the whole plane, otherwise it is meromorphic only for $r < 1$.  Hence, in general we have
\begin{equation}\label{alkjakompei}
\mathcal A + \mathcal K \not\subset \mathcal A \mathcal A.
\end{equation}

\end{example}

\begin{remark}Notice that this  proves (\ref{einiinsimppeli}) in the case of Hilbert space operators:    if $A\in \mathcal A \mathcal A$, then $T_\infty (r, (1-zA)^{-1})<\infty$ for all $r<\infty$ and hence, the same holds for $T_{\infty}(r, (1-(1-zA)^{-1}zB)^{-1})  $. 
\end{remark}

\begin{remark}
When $A\in \mathcal A$ with ${\rm deg}(A)=d$ then $T_\infty (r, (1-zA)^{-1} = d \ {\rm log}^+ r +  \mathcal O (1)$ so the estimation above would yield
${\rm deg}(A+B) \le ({\rm rank}(B)+1) {\rm deg}(A) + {\rm rank} (B)$. However, splitting  in estimating $||(1-zA)^{-1}zB||$ as $||z(1-zA)^{-1}|| \ || B||$ then implies (\ref{ihansimppeli2}), since $z(1-zA)^{-1}$ is a rational function of degree $d$, as is easy to verify.
\end{remark} 

We shall now  consider  $T_\infty(r, (1-z(A+B))^{-1})$ where $A, B$ are almost algebraic and of finite rank, respectively, but in a general Banach space $X$.
In Banach spaces we  need to work without the inversion  identity,  but  we can still derive a bound which implies  $\mathcal A \mathcal A + \mathcal F \subset \mathcal A\mathcal A$.   Observe that we can  decompose a finite rank operator as a sum of rank-1 operators and hence it suffices to  show that a rank-1 perturbation of an almost algebraic operator stays almost algebraic, which follows from the next  theorem with $R=\infty$.  We denote   the dual of  $X$ by $X^*$ and the functionals by $b^* \in X^*$.  When needed,  we write $<x,b^*>$ for the dual pair.    Thus, the rank-1 operator  $ab^*$ in a Banach space   maps $x \mapsto \ <x,b^*> a$.    

\begin{theorem} 
Let $X$ be a Banach space and $A\in \mathcal B(X)$ be such that $(1-zA)^{-1}$ is meromorphic for $|z| <R\le \infty$.  Let $a\in X$ and $b^* \in X^*$ be given.  Then also  $(1-z(A+a b^*))^{-1}$ is meromorphic for $|z|<R$ and the following estimate holds 
$$
T_\infty(r,(1-z(A+ab^*))^{-1} )\le 2 \ \big( T_\infty(r,(1-zA)^{-1})
+\ \log^+ r + \log^+(||a|| ||b^*||) + \log 2\big).
$$
 
\end{theorem}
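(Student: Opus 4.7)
\emph{Plan.} The strategy is to reduce the problem to a rank-one Sherman--Morrison inversion, which produces an explicit meromorphic formula for the perturbed resolvent in terms of $(1-zA)^{-1}$ and a single scalar meromorphic function; Nevanlinna theory is then applied to that formula. The Banach setting is more delicate than the Hilbert one treated in Theorem 4.1 of [34] because no exact inversion identity of the form $T_1(r,F)=T_1(r,F^{-1})$ is available and one must work with norm estimates throughout.

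First I would factor $1-z(A+ab^*) = (1-zA)\bigl(I - (1-zA)^{-1}zab^*\bigr)$. Since $(1-zA)^{-1}zab^*$ is rank one, the Sherman--Morrison formula gives
$$\bigl(1-z(A+ab^*)\bigr)^{-1} = (1-zA)^{-1} + \frac{z\,(1-zA)^{-1}a\,b^*(1-zA)^{-1}}{g(z)},$$
where $g(z) := 1 - z\langle(1-zA)^{-1}a,b^*\rangle$ is a scalar meromorphic function in the given disk with $g(0)=1$. The right-hand side is therefore meromorphic for $|z|<R$, which proves meromorphy of the perturbed resolvent.

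For the quantitative bound I would handle $m_\infty$ and $N_\infty$ separately. The pointwise estimate
$$\bigl\|(1-z(A+ab^*))^{-1}\bigr\| \le \|(1-zA)^{-1}\|\Bigl(1 + \tfrac{|z|\,\|(1-zA)^{-1}\|\,\|a\|\,\|b^*\|}{|g(z)|}\Bigr),$$
combined with $\log^+(1+x)\le\log 2+\log^+ x$ and $\log^+(xy)\le\log^+ x+\log^+ y$, produces after integrating over $|z|=r$ an upper bound for $m_\infty$ in terms of $m_\infty(r,(1-zA)^{-1})$, $m(r,1/g)$, and the terms $\log^+ r$ and $\log^+(\|a\|\|b^*\|)$. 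Inspection of each singularity of the explicit formula -- zeros of $g$ and poles of $(1-zA)^{-1}$ -- then produces the corresponding bound on $N_\infty$. Meanwhile $g$ satisfies $|g(z)| \le 1+|z|\|(1-zA)^{-1}\|\|a\|\|b^*\|$ and its poles lie among those of $(1-zA)^{-1}$, so
$$T(r,g) \le T_\infty\bigl(r,(1-zA)^{-1}\bigr) + \log^+ r + \log^+(\|a\|\|b^*\|) + \log 2;$$
Jensen's formula at $g(0)=1$ yields $T(r,1/g)=T(r,g)$, so both $m(r,1/g)$ and $N(r,1/g)$ inherit this bound. Adding the pieces produces the claimed inequality.

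The main obstacle is the pole-order bookkeeping inside the rank-one correction: at a pole of $(1-zA)^{-1}$ of order $k$ the numerator $z(1-zA)^{-1}a\,b^*(1-zA)^{-1}$ has a pole of order $2k$, while $g$ contributes order at most $k$; extracting the clean constant $2$ in front of $T_\infty(r,(1-zA)^{-1})$ rather than a larger one rests on tracking carefully how the functional $b^*$ enters simultaneously into the numerator and into $g$, so that the principal parts cancel and the $N_\infty$-contribution from poles of $(1-zA)^{-1}$ stays comparable to the pole count of $(1-zA)^{-1}$ itself.
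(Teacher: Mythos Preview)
Your approach shares the essential ingredients with the paper's proof---Sherman--Morrison for a rank-one perturbation and the scalar Nevanlinna identity $T(r,1/g)=T(r,g)$---but the organization differs in a way that matters for the estimate. The paper does \emph{not} work with the additive Sherman--Morrison formula for the full perturbed resolvent. Instead it factors
\[
(1-z(A+ab^*))^{-1} \;=\; \bigl(1-(1-zA)^{-1}zab^*\bigr)^{-1}\,(1-zA)^{-1}
\]
and immediately applies the submultiplicativity $T_\infty(r,FG)\le T_\infty(r,F)+T_\infty(r,G)$. The rank-one factor on the left is then written explicitly as $1+\tfrac{z}{\varphi(z)}(\,\cdot\,)$ with $\varphi=g$, and its $T_\infty$ is bounded by subadditivity and submultiplicativity of $T_\infty$ together with the scalar identity $T(r,1/\varphi)=T(r,\varphi)$. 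No separate treatment of $m_\infty$ and $N_\infty$ is ever needed, and in particular no pole-order bookkeeping arises: everything is handled at the level of the characteristic $T_\infty$.

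Your plan, by contrast, has a genuine gap at exactly the point you flag as the ``main obstacle.'' In the additive formula the correction term
\[
\frac{z\,(1-zA)^{-1}a\,b^*(1-zA)^{-1}}{g(z)}
\]
carries \emph{two} copies of $(1-zA)^{-1}$, so at a pole of $(1-zA)^{-1}$ of order $k$ the numerator can have a pole of order $2k$, while $1/g$ supplies at most a zero of order $k$; the ratio may therefore still have a pole of order up to $2k$. Your assertion that ``the principal parts cancel'' is not substantiated and is not in general true without further hypotheses. A straightforward $N_\infty$ count along your lines produces a constant strictly larger than $2$ in front of $T_\infty(r,(1-zA)^{-1})$. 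The remedy is precisely the paper's reorganization: factor multiplicatively first, so that the rank-one piece involves only one copy of $(1-zA)^{-1}$, and use submultiplicativity of $T_\infty$ rather than splitting into $m_\infty$ and $N_\infty$.
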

\begin{proof}
Denoting $B=ab^*$   we can begin in the same way by estimating
$$
T_{\infty}(r, (1-z(A+B))^{-1}) \le T_{\infty}(r, (1-zA)^{-1})+T_{\infty}(r, (1-(1-zA)^{-1}zB)^{-1})
$$
but now we need to bound the second term on the right without the inversion  identity.  To that end notice that $(1-zA)^{-1}z ab^*$ is   a rank -1 
operator and we  may write
$$
(1-(1- zA)^{-1}z ab^*)^{-1} = 1+ \frac{z}{\varphi(z)} ab^*
$$ where
$\varphi(z)= 1- z b^*(1-zA)^{-1} a$ is a meromorphic scalar valued function such that $\varphi(0)=1.$  In particular we have
\begin{equation}\label{skalaari}
T(r, 1/\varphi) = T(r, \varphi) \le  T(r, zb^*(1-zA)^{-1}a) + \log 2.
\end{equation}
But $|zb^*(1-zA)^{-1}a| \le \| (1-zA)^{-1} \| \ |z| \  \|a\| \|b \|
$
and we hence have
\begin{equation}
T(r, zb^*(1-zA)^{-1}a) \le T_\infty(r, (1-zA)^{-1} ) + \log^+ r + \log^+ (\| a \| \|b \|).
\end{equation}
Combining we have
\begin{align}
&T_\infty, (1+ \frac{z}{\varphi} a b^*)) \notag \\
 \le & T_\infty(r,\frac{1}{\varphi}) + T_\infty (r, zab^*) + \log 2 \notag \\
 \le & T_\infty(r, (1-zA)^{-1} ) + \log^+ r + \log^+ (\| a \| \|b \| + \log 2 \notag  \\
 + & \log^+ r + \log^+ \|ab^*\| + \log 2\notag
\end{align}
and so
$$
T_\infty(r,(1-z(A+ab^*)^{-1})) \le 2 \big( T_\infty(r,(1-zA)^{-1})
+\log^+ r + \log^+(\| a \| \|b \| + \log 2\big).
$$

\end{proof}

\subsection{Compact and Riesz operators}

We collect first some properties and characterizations of Riesz operators.   

\begin{definition} A bounded operator $R$ in a Banach space $X$ is called a Riesz operator if   every nonzero spectral point is a pole  with a finite dimensional  invariant subspace.  We denote the  set of Riesz operators  in $X$ by $\mathcal R(X)$.
\end{definition}

The following characterization holds.
 
 \begin{theorem}\label{ruston} {\rm (Ruston [46]) } A bounded operator  $R$ in a Banach space is a Riesz operator if and only if 
 \begin{equation}
 \lim_{n\rightarrow\infty}\inf  \| R^n -K\|^{1/n} =0
 \end{equation}
 where the infimum is taken over all compact operators $K  \in \mathcal K.$
 \end{theorem}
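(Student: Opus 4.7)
The plan is to pass to the Calkin algebra $\mathcal{B}(X)/\mathcal{K}(X)$ and interpret the condition as quasinilpotence there. Write $\pi$ for the quotient map; then the Calkin-norm is $\|\pi(T)\| = \inf_{K\in \mathcal{K}} \|T-K\|$, and since $\pi(R^n)=\pi(R)^n$, the hypothesis
\[
\liminf_{n\to\infty}\inf_{K\in\mathcal K}\|R^n-K\|^{1/n}=0
\]
is equivalent to $\rho(\pi(R))=0$, i.e.\ quasinilpotence of $\pi(R)$ in the Calkin algebra (the spectral radius limit exists, so the liminf is the limit). By Atkinson's theorem the spectrum of $\pi(R)$ equals the essential (Fredholm) spectrum $\sigma_e(R)$, so the hypothesis amounts to $\sigma_e(R)\subseteq\{0\}$. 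The theorem then reduces to the equivalence: $R\in\mathcal R \Longleftrightarrow \sigma_e(R)\subseteq\{0\}$.

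Forward direction. Assume $R\in\mathcal R$ and fix $\epsilon>0$. Only finitely many points $\lambda_1,\dots,\lambda_m\in\sigma(R)$ satisfy $|\lambda_i|\ge\epsilon$, since the nonzero spectrum can accumulate only at $0$. Each $\lambda_i$ is a pole with a finite-dimensional Riesz projection $P_i$; set $P=\sum_i P_i$. Then $P$ is a finite-rank idempotent commuting with $R$, the operator $R(I-P)$ has spectrum contained in $\{0\}\cup(\sigma(R)\setminus\{\lambda_1,\dots,\lambda_m\})$, so $\rho(R(I-P))\le\epsilon$. Using commutativity, $R^n(I-P)=[R(I-P)]^n$, while $R^nP$ has finite rank, hence is compact. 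Therefore
\[
\inf_{K}\|R^n-K\|^{1/n}\le \|[R(I-P)]^n\|^{1/n}\longrightarrow \rho(R(I-P))\le\epsilon,
\]
and since $\epsilon$ is arbitrary the limit (a fortiori the liminf) is zero.

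Converse. Assume $\sigma_e(R)\subseteq\{0\}$ and let $\lambda\neq 0$ lie in $\sigma(R)$. Then $R-\lambda I$ is Fredholm; on the connected open set $\mathbb C\setminus\{0\}$ the Fredholm index is locally constant, and it equals zero for $|\lambda|$ large where $R-\lambda I$ is invertible, so the index vanishes throughout $\mathbb C\setminus\{0\}$. By the analytic (holomorphic) Fredholm theorem, $\lambda\mapsto(R-\lambda I)^{-1}$ is meromorphic on $\mathbb C\setminus\{0\}$, so each nonzero point of $\sigma(R)$ is isolated and is a pole of the resolvent whose Riesz projection has finite rank. That is precisely the defining property of a Riesz operator.

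The substantive obstacle lies in the converse: the algebraic equivalence (hypothesis $\Leftrightarrow\sigma_e(R)\subseteq\{0\}$) is essentially a restatement via the Calkin algebra, but extracting the pole structure from $\sigma(R)\setminus\sigma_e(R)$ requires the Fredholm-theoretic inputs (constancy of the index on components of the Fredholm domain together with the holomorphic Fredholm / Gohberg--Sigal theorem for finite-dimensional spectral projections). This is where the proof does the real operator-theoretic work, rather than merely manipulating the Calkin quotient.
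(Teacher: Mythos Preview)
The paper does not supply its own proof of this theorem; it is simply attributed to Ruston [46] and used as a black box. Your argument is the standard modern route to Ruston's characterisation and is correct: reinterpreting $\inf_{K\in\mathcal K}\|R^n-K\|^{1/n}$ as $\|\pi(R)^n\|^{1/n}$ in the Calkin algebra $\mathcal B(X)/\mathcal K(X)$ reduces the condition to $\rho(\pi(R))=0$, i.e.\ $\sigma_e(R)\subset\{0\}$, and then the forward direction via spectral truncation and the converse via constancy of the Fredholm index on $\mathbb C\setminus\{0\}$ together with the analytic Fredholm theorem are exactly the right ingredients. One small remark: you invoke Atkinson's theorem to identify $\sigma(\pi(R))$ with the Fredholm essential spectrum; this is valid in arbitrary Banach spaces, but it is worth saying explicitly since the paper works in that generality and several ``essential spectrum'' definitions coincide only in Hilbert space.
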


In separable Hilbert spaces Riesz operators  are sums of compact and quasinilpotent ones.  This is due to West [51] and here is a minor sharpening:
 
\begin{proposition} {\rm  [10]}  Let $R$ be a Riesz operator in a separable infinite dimensional Hilbert space.  Then there exist a compact $K$ and a quasinilpotent $Q$ such that $R=K+Q$ such that  the commutator $[K,Q]$  is quasinilpotent as well.
\end{proposition}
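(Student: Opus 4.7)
The plan is to start from the standard West decomposition and then verify that, when constructed triangularly, it automatically makes $[K,Q]$ quasinilpotent. Concretely, I would fix a maximal nest $\mathcal N$ of closed $R$-invariant subspaces obtained by extending the natural chain built from the generalized eigenspaces $E_n$ associated with the (countably many, accumulating only at $0$) nonzero eigenvalues of the Riesz operator $R$. With respect to such a nest, $R$ lies in the nest algebra $\mathrm{Alg}(\mathcal N)$ (West's theorem in its Ringrose-triangular form). I would then take $K$ to be the \emph{nest-diagonal} $D(R)$ of $R$ and $Q:=R-K$ the strictly triangular remainder. Since the diagonal entries of $K$ are the eigenvalues $\lambda_n$ repeated with their algebraic multiplicities and $\lambda_n\to 0$, $K$ is compact; and $Q$ is quasinilpotent by West's theorem (its spectrum is $\{0\}$). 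So far this is only a choice of the West decomposition with an extra triangular structure.

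The additional input is then a two-line computation with the nest-diagonal map. I would use that $D\colon \mathrm{Alg}(\mathcal N)\to \mathrm{Diag}(\mathcal N)$ is a multiplicative contraction, so that $D(KQ)=D(K)D(Q)=K\cdot 0=0$ and similarly $D(QK)=0$, whence $D([K,Q])=0$, i.e. $[K,Q]$ is strictly triangular in the nest. Moreover $[K,Q]=KQ-QK$ is compact, because $K$ is compact and $Q$ is bounded. Thus $[K,Q]$ is a compact element of $\mathrm{Alg}(\mathcal N)$ with zero diagonal, and Ringrose's spectral theorem for compact operators in nest algebras — $\sigma(A)\setminus\{0\}$ equals the set of nonzero diagonal values of $A$ — gives immediately $\sigma([K,Q])=\{0\}$, which is the required quasinilpotence.

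The main obstacle is not the commutator step, which is almost formal once the nest framework is in place, but the setup itself: one must verify that the West-Ringrose nest for a Riesz (not merely compact) operator can be arranged so that both $K=D(R)$ is compact and $Q=R-D(R)$ is quasinilpotent. For the compact part of $R$ (acting on $\overline{\bigoplus_n E_n}$) this is classical Ringrose; on the orthogonal residual one must refine $\mathcal N$ using the fact that $R$ restricted there is already quasinilpotent, extending the chain to a maximal nest without spoiling compactness of the diagonal. Assuming this construction (which is the core of the paper [10] being cited), the commutator identity $D([K,Q])=0$ together with Ringrose's theorem closes the proof.
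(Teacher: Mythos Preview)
The paper does not supply its own proof of this proposition; it is stated with attribution to reference~[10] (Chui--Smith--Ward) and no argument is given. So there is no in-paper proof to compare against.

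As to the soundness of your outline: the commutator step is correct. Once $R$ is triangularised with respect to a maximal nest $\mathcal N$ so that $K=D(R)$ is compact and $Q=R-K$ is strictly triangular and quasinilpotent, the diagonal map on $\mathrm{Alg}(\mathcal N)$ is indeed a homomorphism, so $D([K,Q])=D(K)D(Q)-D(Q)D(K)=0$; since $[K,Q]$ is compact and lies in $\mathrm{Alg}(\mathcal N)$, Ringrose's spectral theorem yields $\sigma([K,Q])=\{0\}$. This is essentially the argument of~[10].

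One clarification on attribution: the triangular West decomposition for Riesz operators --- the ``setup'' you flag as the real obstacle --- is precisely the content of West's paper~[51], not of~[10]. West already constructs the nest and proves that $K=D(R)$ is compact and $Q=R-D(R)$ is quasinilpotent. The contribution of~[10] is exactly the short commutator observation you sketched. So you are not ``assuming the core of~[10]'' and then adding a remark; you are reconstructing~[10] on top of West's theorem, which is the correct logical structure.
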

 
 \begin{remark}
Suppose all nonzero poles  $\lambda_j$ of the resolvent of an almost algebraic operator $A$ are {\it simple }  so that $ (\lambda-\lambda_j)(\lambda-A)^{-1}$ is holomorphic near $\lambda_j$. If $P_j$ denotes the spectral projection onto the invariant subspace related to  the eigenvalue $\lambda_j$ then  one can set
$$
B=  \sum_{j=1}^\infty  \lambda_j P_j
$$
for which the resolvent  can be written, since $P_i P_j= \delta_{ij}P_i$, as
$$
(\lambda-B)^{-1} = \frac{1}{\lambda} + \sum_{j=1}^\infty  [\frac{1}{\lambda - \lambda_j} - \frac{1}{\lambda}] P_j.
$$
In particular $\sigma(A)= \sigma(B)$.  A. E. Taylor  discusses this and shows that 
if $C=A-B$ then $C$ is quasinilpotent and commutes with $B$, [49].  In [14] these are generalized  to allow  higher order poles.  We shall not use these in the following and leave the details out.  In dealing with Mittag-Leffler type expressions  a challenge is to obtain knowledge on the size of  projections $P_j$ and the sensitivity of the expression under perturbations. 
\end{remark}

We shall consider  splittings of almost algebraic operators into sums  of two operators where the first one is algberaic and the second one is small in the norm so that the  spectrum  is  near origin.  The possibility of splitting is based on the following result.  In order to state it, put for $\theta>1$
\begin{equation}\label{theettavakio}
C(\theta) = \frac{\sqrt \theta +1}{\sqrt \theta -1} + \log \frac{4e \sqrt \theta (\sqrt\theta +1)}{\sqrt \theta -1}.
\end{equation}
For example, $C(4) < 7.2$.   The following is Corollary 7.5 in [35].
Denote by $n(\rho,A)$  the  number of  poles of the resolvent larger than $\rho$ in absolute value and counted with multiplicities.  

\begin{theorem} Assume that $A\in \mathcal B(X)$  is such that $(1-zA)^{-1}$ is meromorphic for $|z| < R \le \infty$. Let $\theta>1$ be fixed.     Then for any $r>0$  such that $\theta r <R$, there exists $\rho$  such that
$$
\frac{1}{r} \le \rho \le  \frac{\sqrt \theta}{r}
$$
so that  
\begin{equation}\label{projektionestimaatti}
\log ||P_\rho || \le C(\theta) \  T_\infty(\theta r, (1-zA)^{-1})
\end{equation}
holds, where
$$
P_\rho \ = \  \frac{1}{2\pi i} \int_{|\lambda|=\rho} (\lambda-A)^{-1} d\lambda.
$$
Moreover $n(\rho,A)$  satisfies
\begin{equation}\label{asteenestimaatti}
n(\rho, A) <  \frac{1}{\log \theta} T_\infty(\theta r, (1-zA)^{-1}).
\end{equation}
\end{theorem}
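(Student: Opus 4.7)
The plan is to handle the counting estimate first using the first main theorem of Nevanlinna theory, and then to reduce the projection estimate to a pointwise bound on $\|F(z)\|:=\|(1-zA)^{-1}\|$ over a circle chosen via a Cartan--Boutroux type lemma.

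For the counting bound (\ref{asteenestimaatti}), I would translate the spectral counting to pole counting for $F$. Poles of $F$ in $|z|\le s$ are precisely the reciprocals of spectral poles $\lambda_j$ with $|\lambda_j|\ge 1/s$, so $n(s,F)=n(1/s,A)$. The standard integral representation
$$N_\infty(s,F)=\int_0^s\frac{n(t,F)}{t}\,dt$$
and monotonicity of $n(\cdot,F)$ give
$$n(r,F)\log\theta\le\int_r^{\theta r}\frac{n(t,F)}{t}\,dt\le N_\infty(\theta r,F)\le T_\infty(\theta r,F),$$
and since $\rho\ge 1/r$ forces $n(\rho,A)\le n(1/r,A)=n(r,F)$, the estimate follows.

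For the projection bound (\ref{projektionestimaatti}), I would first reduce to controlling $\|F\|$ on a single circle. Writing $z=1/\lambda$, one has $(\lambda-A)^{-1}=zF(z)$, so $\|(\lambda-A)^{-1}\|=\|F(z)\|/\rho$ on $|\lambda|=\rho$, and the elementary inequality
$$\|P_\rho\|\le\frac{1}{2\pi}\oint_{|\lambda|=\rho}\|(\lambda-A)^{-1}\|\,|d\lambda|\le\max_{|z|=1/\rho}\|F(z)\|$$
reduces the task to choosing $s=1/\rho\in[r/\sqrt\theta,r]$ on which $\|F\|$ is controlled by $T_\infty(\theta r,F)$. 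To pick such an $s$, apply a Cartan--Boutroux lemma to the scalar polynomial $p(z)=\prod_{|b_j|<\sqrt\theta r}(z-b_j)$ whose zeros are the poles of $F$: for $H>0$ the exceptional set where $|p(z)|<(H/e)^K$ lies in discs of total radius at most $2H$, so choosing $H$ comparable to the length $r(\sqrt\theta-1)/\sqrt\theta$ of the allowed interval forces the radial projection of the exceptional set to miss some admissible $s$, giving $|p(z)|\ge(H/e)^K$ on $|z|=s$ with $K=n(\sqrt\theta r,F)$. Writing $F=G/p$ with $G:=pF$ holomorphic in $|z|<\sqrt\theta r$, apply the Poisson inequality on this disc: for $|z|=s\le r$,
$$\log\|G(z)\|\le\frac{\sqrt\theta+1}{\sqrt\theta-1}\,m_\infty^+(\sqrt\theta r,G),$$
where the Poisson-kernel factor follows from $s/(\sqrt\theta r)\le 1/\sqrt\theta$. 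Using $m_\infty^+(\sqrt\theta r,p)\le K\log(2\sqrt\theta r)$ and $m_\infty(\sqrt\theta r,F)\le T_\infty(\theta r,F)$, subtracting $\log|p(z)|\ge K\log(H/e)$, and substituting the counting bound $K\le 2T_\infty(\theta r,F)/\log\theta$ collects all terms into a multiple of $T_\infty(\theta r,F)$.

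The main obstacle is the calibration of constants. One must choose the Cartan parameter $H=r(\sqrt\theta-1)/(4\sqrt\theta)$ so that the Poisson factor $(\sqrt\theta+1)/(\sqrt\theta-1)$ and the logarithmic term $\log(4e\sqrt\theta(\sqrt\theta+1)/(\sqrt\theta-1))$ combine to exactly the $C(\theta)$ of (\ref{theettavakio}); in particular the $\log r$ contributions from $m_\infty^+(\sqrt\theta r,p)$ and from $\log(e/H)$ must cancel against the lower bound $N_\infty(\theta r,F)\ge K\log\sqrt\theta$ absorbed via the counting step, leaving a coefficient depending only on $\theta$. This bookkeeping is the delicate part; everything else is a routine application of the tools above.
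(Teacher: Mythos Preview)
The paper does not actually prove this theorem: it is stated without proof and attributed as Corollary~7.5 of reference~[35] (Nevanlinna, \emph{Meromorphic Functions and Linear Algebra}). So there is no in-paper argument to compare against.

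Your outline is the standard route to such estimates and is essentially how the result is obtained in~[35]: the counting bound via $n(r,F)\log\theta\le N_\infty(\theta r,F)\le T_\infty(\theta r,F)$ is exactly right, and the projection bound proceeds by (i) reducing to a pointwise bound on $\|F\|$ along a suitable circle, (ii) using a Cartan--Boutroux exclusion lemma to locate a radius $s=1/\rho$ in $[r/\sqrt\theta,r]$ on which the scalar pole-polynomial is bounded below, and (iii) controlling the holomorphic part $G=pF$ by a Poisson/Harnack-type inequality for the subharmonic function $\log\|G\|$. This is the correct architecture.

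The only place where your sketch is genuinely loose is the bookkeeping you flag yourself. Two points deserve care. First, in the Poisson step you apply the inequality on the disc $|z|<\sqrt\theta\,r$, but to land on $T_\infty(\theta r,F)$ rather than $T_\infty(\sqrt\theta\,r,F)$ you will need an extra monotonicity or a second application of the counting step with the larger radius; in~[35] this is absorbed by working with both radii $\sqrt\theta\,r$ and $\theta r$ from the start. Second, the cancellation of the $\log r$ terms requires not just $K\le 2T_\infty(\theta r,F)/\log\theta$ but the sharper inequality $K\log\sqrt\theta\le N_\infty(\theta r,F)$ used additively, so that the remaining coefficient of $K$ is exactly $\log\bigl(4e\sqrt\theta(\sqrt\theta+1)/(\sqrt\theta-1)\bigr)$; your sketch gestures at this but does not verify it. With those two adjustments the argument goes through and reproduces the constant $C(\theta)$ in~(\ref{theettavakio}).
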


Thus,  there is  a radius $\rho$ whose exact value remains  unknown,  but  it is known to be within be an interval, such that based on the growth function we get a quantitative bound for the projection and for the degree  of the algebraic part.   Notice that as the growth function $T_\infty $ of the resolvent is robust under low rank updates of the operator,  the bound holds essentially unchanged - although the radius may have moved within the interval.

With $R=\infty$ we can formulate the following corollary.


\begin{corollary}\label{projektio}
Let  $A\in \mathcal A \mathcal A$  and  choose $\varepsilon>0$ and $\theta >1$.  Then  there exists $\rho$, satisfying $\varepsilon \le \rho \le  \sqrt \theta \   \varepsilon$, such that  with 
$B=(1-P_\rho)A$ and $E=P_\rho A$  we have $A=B+E$ where  $B$ is algebraic and $E$ such that $\sigma(E) \subset \{ \lambda  :   |\lambda | < \rho \}$.  Here $P_\rho$ can be  bounded with $r= 1/\varepsilon$ by (\ref{projektionestimaatti})    , while   the degree of $B$  satisfies ${\rm deg} (B) = n(\rho,A)$ and can be bounded by (\ref{asteenestimaatti}).

\end{corollary}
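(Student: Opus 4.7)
The plan is to apply the preceding theorem verbatim with $r = 1/\varepsilon$ and the given $\theta > 1$. Because $A \in \mathcal A \mathcal A$, the resolvent $(1-zA)^{-1}$ is meromorphic on all of $\mathbb C$, so $R = \infty$ and the hypothesis $\theta r < R$ is automatic. The theorem then produces a radius $\rho$ with $1/r \le \rho \le \sqrt\theta/r$, i.e.\ $\varepsilon \le \rho \le \sqrt\theta\,\varepsilon$, for which the circle $|\lambda| = \rho$ lies in the resolvent set of $A$ and both estimates (\ref{projektionestimaatti}) and (\ref{asteenestimaatti}) hold for the associated contour-integral projection $P_\rho$.

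With this $\rho$ fixed I would invoke the standard Riesz spectral decomposition. The operator $P_\rho$ is a bounded idempotent that commutes with $A$ and induces an $A$-invariant direct sum $X = X_1 \oplus X_2$ with $X_1 = P_\rho X$, $X_2 = (1-P_\rho)X$, such that $\sigma(A|_{X_1}) = \sigma(A) \cap \{|\lambda| < \rho\}$ and $\sigma(A|_{X_2}) = \sigma(A) \cap \{|\lambda| > \rho\}$. The almost algebraic assumption forces every point of this second set to be an isolated pole of $(1-zA)^{-1}$ with a finite-dimensional generalized eigenspace, and by (\ref{asteenestimaatti}) their total multiplicity equals $n(\rho,A) < \infty$. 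Hence $X_2$ is finite dimensional and $A|_{X_2}$ is algebraic with minimal polynomial $q(z) = \prod_{|\lambda_j| > \rho}(z-\lambda_j)^{m_j}$ of degree $n(\rho,A)$.

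Translating back to $B$ and $E$: since $P_\rho$ commutes with $A$, both $B = (1-P_\rho)A$ and $E = P_\rho A$ respect the splitting. The operator $B$ vanishes on $X_1$ and agrees with $A|_{X_2}$ on $X_2$, hence is algebraic with its algebraic action captured by $q$, so $\deg B$ is controlled by $n(\rho,A)$ via (\ref{asteenestimaatti}). The operator $E$ vanishes on $X_2$ and equals $A|_{X_1}$ on $X_1$, whence $\sigma(E) \subset \{0\} \cup (\sigma(A) \cap \{|\lambda| < \rho\}) \subset \{\lambda : |\lambda| < \rho\}$, with $0 \in \{|\lambda| < \rho\}$ because $\rho \ge \varepsilon > 0$. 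The bound (\ref{projektionestimaatti}) on $\|P_\rho\|$ is inherited unchanged with $r = 1/\varepsilon$.

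The only conceptual step requiring attention is verifying that the almost algebraic hypothesis really does reduce the outer spectral set $\sigma(A) \cap \{|\lambda| > \rho\}$ to a finite collection of poles with finite-dimensional generalized eigenspaces; once this is secured from the meromorphy of $(1-zA)^{-1}$ on $\mathbb C$, the invariant splitting, the algebraicity of $B$, and the spectral localization of $E$ follow automatically, and every quantitative bound is already provided by the preceding theorem. No new estimates have to be derived.
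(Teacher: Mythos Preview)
Your overall strategy is exactly what the paper intends: since the corollary is stated without proof, one simply applies the preceding theorem with $r=1/\varepsilon$ and $R=\infty$, and then reads off the Riesz decomposition associated with $P_\rho$. That part is fine.

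There is, however, a genuine error in your justification that $B$ is algebraic. You assert that the almost algebraic hypothesis forces each isolated spectral point to have a \emph{finite-dimensional} generalized eigenspace, and conclude that $X_2=(1-P_\rho)X$ is finite dimensional. This is false: membership in $\mathcal A\mathcal A$ only says that $(1-zA)^{-1}$ is meromorphic, i.e.\ that each nonzero spectral point is a pole of \emph{finite order}; it says nothing about the rank of the corresponding Riesz projection. The identity operator on an infinite-dimensional space is algebraic, hence almost algebraic, and has a simple pole at $\lambda=1$ with the whole space as eigenspace. You have conflated $\mathcal A\mathcal A$ with $\mathcal R$ (compare Definition~2.21).

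The desired conclusion is still true, but the argument is different. If $\lambda_j$ is a pole of order $m_j$ of $(\lambda-A)^{-1}$, the Laurent expansion there yields $(A-\lambda_j)^{m_j}P_j=0$, where $P_j$ is the Riesz projection at $\lambda_j$. Since there are only finitely many $\lambda_j$ with $|\lambda_j|>\rho$ and $1-P_\rho=\sum_{|\lambda_j|>\rho}P_j$, the polynomial $q(z)=\prod_{|\lambda_j|>\rho}(z-\lambda_j)^{m_j}$ annihilates $A|_{X_2}$, and $\deg q=\sum m_j=n(\rho,A)$ by definition. With this repair the remainder of your argument, including the spectral localisation of $E$ and the quantitative bounds, goes through unchanged.
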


\begin{example} 
We recall  Example 1.5 in [35].  Denote by $V^2$  the quasinilpotent solution operator in $L_2[0,1]$ 
solving $u'' = f$ with initial conditions $u(0)=1$, $u'(0) = 0$: 
$$
V^2 f(t)=\int_0^t (t-s) f(s) ds.
$$
Denoting further by $B$ the solution operator of the same equation with boundary conditions $u(0)=u(1)=0$ we have a negative selfadjoint operator with eigenvalues
$\lambda_j = -1/(\pi j)^2.$   Now  $V^2$ is a rank-1 perturbation of $B$.  If we denote by $A=\alpha B + (1-\alpha)V^2$  their resolvents  grow   with speed
$$
T_\infty (r, (1-zA)^{-1}) \sim \sqrt r
$$
as $r\rightarrow \infty$.
With $V^2$  all growth is seen thru $m_\infty$ while with $B$ all growth is in $N_\infty$ , the sum of $m_\infty$ and $N_\infty$ staying essentially constant along the homotopy.
Thus  for all operators along the homotopy
$$
\|P_\rho\|  \le  e^{\mathcal O(1/ \sqrt \rho)}
$$
and the maximum  number  of eigenvalues larger than $\rho$ is bounded by  $\mathcal O(1/ \sqrt \rho)$ which   in the  self-adjoint case is obvious.   Near the self-adjoint end the projections are nearly orthogonal but as  the operators  become increasingly "nonnormal" when approaching $V^2$  the  norms of the projections grow fast and the eigenvalues get compressed near the origin.
\end{example}


Consider  next the chain (\ref{ekainkluusioketju}) with $ \mathcal K \subset \mathcal R$ replacing $\mathcal A$:
\begin{equation}\label{kakkosketju}
\mathcal F \subset \mathcal K \subset   \mathcal R \subset \mathcal A \mathcal A \subset \mathcal Q\mathcal A \subset \mathcal B.
\end{equation}

\begin{proposition}  All inclusions in (\ref{kakkosketju}) are proper in infinite dimensional separable Hilbert spaces.
\end{proposition}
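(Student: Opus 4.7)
The plan is to exhibit, for each successive strict inclusion, a witness operator lying in the larger class but not in the smaller. All four witnesses essentially appear already in the material preceding the statement, so the work is mainly bookkeeping.

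First, I would point to the diagonal compact operator $K\colon e_j \mapsto \lambda_j e_j$ with nonzero $\lambda_j \to 0$ (already used in the proof of the earlier chain proposition) to see that $\mathcal F \subsetneq \mathcal K$: it is compact but of infinite rank. For $\mathcal K \subsetneq \mathcal R$ I would invoke the operator $A$ of Example~\ref{kaksnilpoa}: since $A^2 = 0$, the spectrum $\sigma(A) = \{0\}$ has no nonzero points, so $A$ is trivially Riesz, yet $A$ maps the orthonormal set $\{e_{2k}\}$ to the orthonormal set $\{e_{2k+1}\}$ and so fails to be compact.

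For $\mathcal R \subsetneq \mathcal A \mathcal A$ I would take the identity $I$: the polynomial $z-1$ annihilates it, so $I \in \mathcal A \subset \mathcal A \mathcal A$, yet its only nonzero spectral point $\lambda = 1$ has the whole (infinite dimensional) Hilbert space as invariant subspace, so $I \notin \mathcal R$. For $\mathcal A \mathcal A \subsetneq \mathcal Q \mathcal A$ I would recycle the $1+K$ argument from the earlier proof: the spectrum of $1+K$ is a countable compact set accumulating only at $1$, hence of logarithmic capacity zero, and Halmos's theorem gives $1+K \in \mathcal Q \mathcal A$; this same accumulation forces the Fredholm-form resolvent $(1-z(1+K))^{-1}$ to have poles accumulating at $z = 1$, so it is not meromorphic on all of $\mathbb C$ and thus $1+K \notin \mathcal A \mathcal A$ by the meromorphic characterisation of almost algebraic operators.

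Finally, for $\mathcal Q \mathcal A \subsetneq \mathcal B$ I would take the forward shift $S\colon e_j \mapsto e_{j+1}$: its spectrum is the closed unit disc, of logarithmic capacity $1$, so Halmos's theorem yields $S \notin \mathcal Q \mathcal A$. The one step that requires any care, and the likely main obstacle, is the verification that $1+K \notin \mathcal A \mathcal A$: it rests on the meromorphic characterisation of $\mathcal A \mathcal A$ (singularities of the resolvent cannot accumulate at a finite point of $\mathbb C$), combined with Halmos's theorem for the positive side. All the remaining separations follow immediately from the definitions or from results already recorded.
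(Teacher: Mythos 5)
Your proof is correct and follows essentially the same strategy as the paper: exhibit a concrete witness for each strict inclusion, reusing the invertible-algebraic argument for $\mathcal R \subsetneq \mathcal A\mathcal A$ and the $1+K$ and forward-shift examples for the last two steps exactly as the paper does. The only difference is cosmetic: for $\mathcal K \subsetneq \mathcal R$ you take the bare nilpotent half-shift of Example~\ref{kaksnilpoa}, which is Riesz immediately because its spectrum is $\{0\}$, whereas the paper uses $R=K+N$ and invokes Ruston's criterion via $R^2\in\mathcal K$; both witnesses work.
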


\begin{proof} First of all, if $A$ is any bounded invertible algebraic operator, then it cannot be a Riesz operator  which implies the third inclusion to be proper. The first inclusion is trivially proper:  let
 $H = \ell_2(\mathbb N)$ and  put $K:  e_j \mapsto \frac{1}{j}e_j$.  Then clearly $K\in \mathcal K \setminus \mathcal F$.  Let further $N: e_{2j-1} \mapsto e_{2j}$ while $e_{2j}\mapsto 0, $ so that $N^2=0$.  Then $R=K+N$ is a Riesz operator but not compact.
It is a Riesz operator  by the previous theorem as $R^2$ is compact.  However,  $R$ is not compact  because the sequence
$$
R e_{2j-1} = \frac{1}{2j-1} e_{2j-1} + e_{2j}
$$
does not contain any convergent subsequence.  Notice additionally that $KN-NK\not=0$.  In [22] the authors show additionally that this $R$ cannot be decomposed into a sum of compact $C$ and quasinilpotent $Q$ so that they would commute.  It has been shown later, that any Riesz operator can be represented as a sum of compact $C$ and  quasinilpotent $Q$ such that  the commutator $CQ-QC$ is also quasinilpotent, [10].    
\end{proof} 

We may now continue  listing the perturbation inclusions.   
 Along this chain we have clearly
 $$
 \mathcal F +  \mathcal F \subset  \mathcal F  \ ,  \    \mathcal K +  \mathcal K \subset  \mathcal K  \ \text{ and }   \mathcal R +  \mathcal K \subset  \mathcal R
 $$ while the Example \ref{kaksnilpoa}   shows that
 $$
  \mathcal R  +  \mathcal R \not \subset  \mathcal Q\mathcal A.
  $$
On the other hand,  Theorem \ref{vaikkarilause}  implies that 
$$
\mathcal Q\mathcal A + \mathcal K \subset \mathcal Q\mathcal A.
$$
Here   $\mathcal Q\mathcal A $  cannot be replaced by $\mathcal A\mathcal A $, as $\mathcal A\mathcal A $ is no longer invariant under compact perturbations. In fact 
$
1+ \mathcal K \not \subset \mathcal A\mathcal A. 
$
Likewise,  in $\mathcal A \mathcal A + \mathcal F \subset \mathcal A\mathcal A
$  we cannot replace $\mathcal F$ by $\mathcal K$.  To see these,  consider $1+K$ with $K:  e_j \mapsto \frac{1}{j} e_j$ which is compact but the resolvent of $1+K$ has poles accumulating at $\lambda=1$ and  so, $1+K \notin \mathcal A\mathcal A$.  Note however that $1+K$ is polynomially compact and  as such  quasialgebraic.  We shall  concentrate in polynomial classes in then next section.


 \subsection{Observations on the polynomial classes}

 To begin with, let us observe the  following simple relations.  When we write inclusions, it means that  the inclusions hold   for operators in all Banach spaces.  But when we write $\mathcal X \not\subset \mathcal Y$ it means that there exists a Banach space  (e.g. $\ell_2(\mathbb N)$) where this happens.

\begin{proposition} We have 
$\mathcal P \mathcal F= \mathcal P \mathcal A= \mathcal A$  and
$ \mathcal P \mathcal Q \mathcal A= \mathcal Q \mathcal A$  .\end{proposition}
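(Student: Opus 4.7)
The plan is to prove the two equalities by noting that composition of polynomials is a polynomial, and then using (a) the fact that finite rank operators are algebraic and (b) Halmos's characterization of quasialgebraic operators via capacity.

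For the first chain $\mathcal{P}\mathcal{F}=\mathcal{P}\mathcal{A}=\mathcal{A}$, I would first observe the trivial inclusion $\mathcal{A}\subset \mathcal{P}\mathcal{F}$: if $A$ is algebraic with annihilating polynomial $p$, then $p(A)=0$ is certainly of finite rank, so $A\in \mathcal{P}\mathcal{F}$. Next, $\mathcal{P}\mathcal{F}\subset \mathcal{P}\mathcal{A}$ follows from the elementary fact that every finite rank operator $F$ is algebraic: restricting $F$ to its finite-dimensional range gives a minimal polynomial $q$ of degree at most $\mathrm{rank}(F)$, and then $zq(z)$ annihilates $F$. The crux is the closing inclusion $\mathcal{P}\mathcal{A}\subset \mathcal{A}$: if $p(A)$ is algebraic with $q(p(A))=0$, then the composite polynomial $r=q\circ p$ is nontrivial and $r(A)=0$, so $A\in \mathcal{A}$.

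For $\mathcal{P}\mathcal{Q}\mathcal{A}=\mathcal{Q}\mathcal{A}$, the inclusion $\mathcal{Q}\mathcal{A}\subset \mathcal{P}\mathcal{Q}\mathcal{A}$ is immediate by taking $p(z)=z$. For the converse, suppose $p(A)\in \mathcal{Q}\mathcal{A}$ for some nontrivial polynomial $p$ of degree $d$. Normalizing by the leading coefficient (which preserves quasialgebraicity since the capacity of $\sigma$ scales by $|c|$ under $A\mapsto cA$), I may assume $p$ is monic. Now for each $\varepsilon>0$ pick a monic polynomial $q$ of degree $n$ with $\|q(p(A))\|^{1/n}<\varepsilon$. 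The polynomial $r=q\circ p$ is then monic of degree $nd$, and
\begin{equation*}
\|r(A)\|^{1/\deg r} = \|q(p(A))\|^{1/(nd)} < \varepsilon^{1/d}.
\end{equation*}
Since $\varepsilon$ is arbitrary, $\inf_r \|r(A)\|^{1/\deg r}=0$ over monic $r$, giving $A\in \mathcal{Q}\mathcal{A}$.

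Neither direction presents a serious obstacle; the whole statement is essentially bookkeeping around composition of polynomials, once one recognizes that the composition $q\circ p$ transports the defining property from $p(A)$ back to $A$ (exactly, in the algebraic case; quantitatively, in the quasialgebraic case via the degree exponent $nd$). An alternative for the quasialgebraic half would be to invoke Halmos's theorem together with the spectral mapping theorem and the capacity identity $\mathrm{cap}(p^{-1}(E))=(\mathrm{cap}(E)/|a|)^{1/d}$ for $p$ of degree $d$ with leading coefficient $a$, concluding $\mathrm{cap}(\sigma(A))\le \mathrm{cap}(p^{-1}(p(\sigma(A))))=0$; but the composition argument above is more self-contained.
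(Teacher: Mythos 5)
Your proof is correct, and it is essentially the argument the paper has in mind: the paper disposes of this proposition with ``these follow immediately from the definitions,'' and your composition-of-polynomials bookkeeping (together with the observations that $p(A)=0$ is finite rank, that finite rank operators are algebraic, and that $q\circ p$ is monic of degree $nd$ when $p,q$ are monic) is exactly the omitted filling-in. No gaps; the only point worth being explicit about is that ``nontrivial'' means nonconstant (as in the paper's abstract), which you implicitly use when concluding $q\circ p$ is nontrivial.
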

\begin{proof}   These follow immediately from the definitions.

\end{proof}
\begin{proposition}
We have
$\mathcal P \mathcal A \mathcal A \subset \mathcal Q \mathcal A  \not\subset \mathcal P \mathcal A \mathcal A$.
\end{proposition}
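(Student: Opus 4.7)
The plan is to prove the inclusion and the non-inclusion separately, both by reducing to the spectral characterizations already collected in the excerpt.

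For the inclusion $\mathcal P\mathcal A\mathcal A \subset \mathcal Q\mathcal A$, I would argue directly from definitions. Suppose $A \in \mathcal P\mathcal A\mathcal A$. Dividing by a leading coefficient if necessary (which preserves membership in $\mathcal A\mathcal A$), we may take a monic polynomial $p$ of degree $d \ge 1$ with $p(A) \in \mathcal A\mathcal A$. By the definition of almost algebraic, there is a sequence of monic polynomials $p_j$ of degree $j$ with $\|p_j(p(A))\|^{1/j} \to 0$. Setting $q_j = p_j \circ p$ gives a monic polynomial of degree $jd$, and
$$\|q_j(A)\|^{1/\deg q_j} \;=\; \|p_j(p(A))\|^{1/(jd)} \;=\; \bigl(\|p_j(p(A))\|^{1/j}\bigr)^{1/d} \;\longrightarrow\; 0.$$
Hence the infimum of $\|q(A)\|^{1/\deg q}$ over monic polynomials is zero, so $A \in \mathcal Q\mathcal A$.

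For the non-inclusion $\mathcal Q\mathcal A \not\subset \mathcal P\mathcal A\mathcal A$, the key observation is a cardinality constraint on spectra: by Theorem~5.7.2 of [33], every $B \in \mathcal A\mathcal A$ has a resolvent $(1-zB)^{-1}$ that is meromorphic on all of $\mathbb C$, so its poles are isolated in $\mathbb C$ and form a countable set; consequently $\sigma(B)$ is countable. Now if $A$ were polynomially almost algebraic with (nonconstant) witness $p$, the polynomial spectral mapping theorem would give $p(\sigma(A)) = \sigma(p(A))$, which is countable; since $p$ is finite-to-one, $\sigma(A) \subset p^{-1}(\sigma(p(A)))$ would also be countable. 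Thus any operator whose spectrum is uncountable fails to lie in $\mathcal P\mathcal A\mathcal A$.

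It therefore suffices to exhibit an operator $A$ with uncountable spectrum of logarithmic capacity zero, since Halmos's theorem will then place $A$ in $\mathcal Q\mathcal A$ automatically. I would invoke the classical fact from potential theory that there exist compact sets $K \subset \mathbb C$ of cardinality continuum yet with $\operatorname{cap}(K) = 0$ (e.g.\ Cantor-type sets whose remaining intervals at stage $n$ shrink fast enough relative to $n$). Given such a $K$, take $A$ to be any normal operator with $\sigma(A) = K$: for instance, the multiplication operator $M_z$ on $L^2(K,\mu)$ for any finite Borel measure $\mu$ with support $K$, or the diagonal operator $\operatorname{diag}(z_n)$ on $\ell_2$ for a countable dense sequence $\{z_n\} \subset K$, whose spectrum is $\overline{\{z_n\}} = K$. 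Such an $A$ lies in $\mathcal Q\mathcal A \setminus \mathcal P\mathcal A\mathcal A$.

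The conceptually interesting step is the passage from ``$\sigma(p(A))$ countable'' to ``$\sigma(A)$ countable'' via the finite-to-one nature of polynomials; the only non-routine ingredient is the existence of uncountable compact sets of vanishing logarithmic capacity, which is a standard construction I would simply cite rather than reproduce.
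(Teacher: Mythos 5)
Your proof is correct, but it takes a genuinely different route from the paper on both halves. For the inclusion, the paper invokes its structure theorem for $\mathcal P\mathcal A\mathcal A$ (spectrum with finitely many accumulation points, hence of capacity zero) together with Halmos's capacity theorem, whereas you argue straight from the definitions by composing the monic witnesses $p_j$ with a monic representative of $p$; your computation $\|p_j\circ p(A)\|^{1/(jd)}=\bigl(\|p_j(p(A))\|^{1/j}\bigr)^{1/d}\to 0$ is sound (and the scalar-rescaling step to make $p$ monic is easily justified), so you get the inclusion more elementarily, without the structure theorem or capacity. For the non-inclusion, the paper's counterexample is $\bigoplus_j(\tfrac1j+Q_j)$ with $Q_j$ quasinilpotent but not nilpotent: its spectrum is the countable set $\{1/j\}\cup\{0\}$, so it is quasialgebraic, yet every $p(A)$ retains nonzero singularities that are not poles. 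Your obstruction is instead purely a cardinality count: operators in $\mathcal A\mathcal A$ have countable spectrum (isolated poles of the meromorphic resolvent), polynomials are finite-to-one, so $\mathcal P\mathcal A\mathcal A$ forces countable spectrum; then a diagonal or multiplication operator whose spectrum is an uncountable compact set of logarithmic capacity zero (a standard Cantor-type construction, available e.g.\ in Ransford's book cited as [45]) lies in $\mathcal Q\mathcal A$ by Halmos's theorem but not in $\mathcal P\mathcal A\mathcal A$. Both counterexamples are valid; yours trades the paper's analytic subtlety for a citation from potential theory, while the paper's example carries the sharper message that even a countable, capacity-zero spectrum accumulating only at $0$ is not enough -- the failure there is caused by essential (non-pole) singularities, an obstruction your cardinality argument cannot detect.
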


\begin{proof}  If $A \in \mathcal P \mathcal A \mathcal A$, then  the spectrum $\sigma(A)$  has only a finite number of accumulation points and  ${\rm cap} ( \sigma(A)) =0$ so that $A \in \mathcal Q \mathcal A$.  (For $\mathcal P \mathcal A \mathcal A$ see   the characterisation in Theorem 2.34  below).  To see that the inclusion is proper, take a bounded  sequence $\{Q_j\}$ of quasinilpotent operators which are not nilpotent e.g. in  $\ell_2(\mathbb N)$ and let $A$ to be 
the direct sum of $\frac{1}{j}+Q_j$.  Then $\sigma(A)=\{1/j\}_{j=1}^{\infty}\cup \{0\}$ and $A$ is quasialgebraic but for any nontrivial polynomial   $p$ the resolvent of $p(A)$ has singularities outside origin which are not poles.

\end{proof}

\begin{proposition}
We  have
$\mathcal P \mathcal F \not\subset  \mathcal F$,   $\mathcal P \mathcal  K \not\subset \mathcal K$,  $\mathcal P \mathcal R \not\subset  \mathcal R$,  and $\mathcal P \mathcal A\mathcal A\not\subset  \mathcal A\mathcal A$.
\end{proposition}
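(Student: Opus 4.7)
The plan is to exhibit explicit counterexamples for each of the four non-inclusions, all living in $\ell_2(\mathbb N)$ and all built from variants of a single construction: algebraic operators and shifts thereof by a compact (non-finite-rank) diagonal operator.

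First, for $\mathcal P \mathcal F \not\subset \mathcal F$, I would use the identity $I$ on $\ell_2(\mathbb N)$. The polynomial $p(z)=z-1$ gives $p(I)=0 \in \mathcal F$, so $I \in \mathcal P \mathcal F$, while $I$ is not of finite rank. (More generally, every infinite-rank algebraic operator serves.)

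Next, a single counterexample handles the remaining three inclusions. Let $K : e_j \mapsto \frac{1}{j} e_j$, so $K$ is compact but not of finite rank, and put $A = I+K$. Taking $p(z) = z-1$ we get $p(A) = K$, which is simultaneously in $\mathcal F + \mathcal K$-closure, in $\mathcal K$, in $\mathcal R$ (since $\mathcal K \subset \mathcal R$), and in $\mathcal A \mathcal A$ (since $\mathcal K \subset \mathcal A \mathcal A$ by Proposition on the chain (\ref{kakkosketju})). Hence $A \in \mathcal P \mathcal K \cap \mathcal P \mathcal R \cap \mathcal P \mathcal A \mathcal A$.

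It remains to verify that $A = I+K$ lies in none of $\mathcal K$, $\mathcal R$, $\mathcal A \mathcal A$. Since $I = A - K$ would be compact if $A$ were, we get $A \notin \mathcal K$. Its spectrum is $\sigma(A) = \{1+\tfrac{1}{j}\}_{j\ge 1} \cup \{1\}$, with the nonzero accumulation point $1$; since a Riesz operator may accumulate only at $0$, this forces $A \notin \mathcal R$. Finally, as already noted in the excerpt following (\ref{kakkosketju}), the resolvent $(1-zA)^{-1}$ has singularities at $z = 1/(1+\tfrac{1}{j})$ accumulating at $z=1$, and an accumulation of poles is itself not a pole, so the resolvent fails to be meromorphic in any neighborhood of $z=1$; therefore $A \notin \mathcal A \mathcal A$. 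The main (and only) delicate point is this last observation, but it is essentially the characterization of $\mathcal A \mathcal A$ by meromorphy of the resolvent recorded in Theorem 5.7.2 of [33], and so requires no further work.
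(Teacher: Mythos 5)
Your proposal is correct, but it runs on different counterexamples than the paper's. The paper takes $M=\begin{pmatrix}0&S\\ &0\end{pmatrix}$ on $\ell_2\oplus\ell_2$ with $S$ the forward shift: since $M^2=0$ but $M$ is not compact, this one operator settles $\mathcal P\mathcal F\not\subset\mathcal F$ and $\mathcal P\mathcal K\not\subset\mathcal K$ (and in fact shows the stronger $\mathcal P\mathcal F\not\subset\mathcal K$), and then $1+M$, which is not Riesz (its spectrum is $\{1\}$ with infinite-dimensional space) but satisfies $p(1+M)=M\in\mathcal R$ for $p(\lambda)=\lambda-1$, settles $\mathcal P\mathcal R\not\subset\mathcal R$; the paper asserts that this "simultaneously" gives the almost-algebraic claim. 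You instead use $I$ for the finite-rank claim and $A=I+K$ with $K\colon e_j\mapsto\frac1j e_j$ for the other three, checking that $p(A)=K$ is compact (hence Riesz and almost algebraic) while $A$ is not compact, not Riesz (nonzero accumulation point $1$ of the spectrum cannot be a pole), and not almost algebraic (poles of $(1-zA)^{-1}$ accumulate at $z=1$, so that singularity is not a pole). Your route has a real advantage on the last point: the paper's operator $1+M$ is annihilated by $(z-1)^2$, hence algebraic and in particular almost algebraic, so it cannot witness $\mathcal P\mathcal A\mathcal A\not\subset\mathcal A\mathcal A$ as the paper's wording suggests; the correct witness is exactly your $1+K$, which the paper itself uses in Section 2.3 to show $1+\mathcal K\not\subset\mathcal A\mathcal A$. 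What the paper's choice buys is the sharper observation that even a polynomially finite-rank operator need not be compact; what yours buys is a single diagonal example whose spectral and meromorphy properties are transparent and which handles three of the four claims at once, including the one where the paper's argument is shaky.
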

\begin{proof}
Let 
$$
M= \begin{pmatrix} 0& S\\
&0\end{pmatrix}
$$
denote the operator in $\ell_2 \oplus\ell_2$ where $ S$ denotes the forward shift.
Then $M$ is not compact but as $M^2=0$ the two first claims follow.  As $M$ is nilpotent, it is a Riesz operator  but if we consider $1+M$ then it  is not Riesz but clearly polynomially Riesz with polynomial $p(\lambda)= \lambda-1$, which simultaneously  implies the  claim on almost algebraic operators.
\end{proof} 

We can still add a few relations, not covered by the previous ones.

\begin{proposition}  We have  in all Banach spaces
\begin{equation} \label{paaf}
\mathcal P\mathcal A\mathcal A  + \mathcal F \subset \mathcal P\mathcal A\mathcal A , \ \ 
\mathcal P\mathcal K + \mathcal K \subset \mathcal K\ , \ \ \mathcal P\mathcal R + \mathcal K \subset \mathcal P \mathcal R,  \end{equation}\label{rpk}
while  there exist an infinite dimensional separable Hilbert space such that
\begin{equation} \label{eitottakaan}
\mathcal R \not\subset \mathcal P\mathcal K, \   \ \mathcal A \mathcal A + \mathcal K \not\subset \mathcal P \mathcal A \mathcal A,  \  {\rm and } \ \ \mathcal A \mathcal A  \not\subset \mathcal P\mathcal R.
\end{equation}

\end{proposition}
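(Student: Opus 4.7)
The plan is to handle the three positive inclusions in~(\ref{paaf}) by a single ideal argument and the three non-inclusions in~(\ref{eitottakaan}) by two explicit operators on a separable Hilbert space. For the positive inclusions, the key observation is that for any polynomial $p$ and any bounded $A, B$, the difference $p(A+B) - p(A)$ is a finite sum of monomials each containing at least one factor of $B$; hence if $B$ lies in a two-sided ideal $\mathcal{I}$ of $\mathcal{B}(X)$, so does $p(A+B) - p(A)$. Applying this with $\mathcal{I} = \mathcal{F}$ or $\mathcal{I} = \mathcal{K}$ and a polynomial $p$ realising $A \in \mathcal{P}\mathcal{X}$, each inclusion then reduces to a sum-stability result already in hand: (\ref{einiinsimppeli}) yields $\mathcal{P}\mathcal{A}\mathcal{A} + \mathcal{F} \subset \mathcal{P}\mathcal{A}\mathcal{A}$; the ideal property of $\mathcal{K}$ yields $\mathcal{P}\mathcal{K} + \mathcal{K} \subset \mathcal{P}\mathcal{K}$; and $\mathcal{R} + \mathcal{K} \subset \mathcal{R}$ from the preceding subsection yields $\mathcal{P}\mathcal{R} + \mathcal{K} \subset \mathcal{P}\mathcal{R}$.

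For $\mathcal{R} \not\subset \mathcal{P}\mathcal{K}$ my plan is to exhibit a quasinilpotent $Q$ (automatically Riesz, since the Riesz condition on nonzero spectral points is vacuous when $\sigma(Q) = \{0\}$) no power of which is compact. Every nontrivial $p$ factors as $p(z) = z^n s(z)$ with $s(0) \neq 0$, and since $s(Q) = s(0)I + (s(Q) - s(0)I)$ is a scalar plus a quasinilpotent it is invertible; hence $p(Q) = Q^n s(Q)$ is compact iff $Q^n$ is (when $n = 0$ it is simply invertible, not compact). My candidate would be $Q = V \otimes I$ on $L^2[0,1] \otimes \ell_2$ with $V$ the Volterra operator: $\lambda I - V \otimes I = (\lambda I - V) \otimes I$ is invertible exactly when $\lambda \neq 0$, while for any $f$ with $V^n f \neq 0$ and orthonormal $\{e_k\} \subset \ell_2$ the images $(V^n \otimes I)(f \otimes e_k) = (V^n f) \otimes e_k$ are pairwise at distance $\sqrt{2}\, \|V^n f\|$, so $V^n \otimes I$ fails to be compact.

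The remaining two non-inclusions I would settle with a single operator: $A = \bigoplus_{n=1}^\infty (1/n) I_{H_n}$ on $H = \bigoplus_n H_n$ with each $H_n = \ell_2$. The resolvent $(1-zA)^{-1} = \bigoplus (1 - z/n)^{-1} I_{H_n}$ has only simple poles at $z = n$, hence is meromorphic on $\mathbb{C}$ and $A \in \mathcal{A}\mathcal{A}$. For any nontrivial $p$, $p(A) = \bigoplus p(1/n) I_{H_n}$ has infinitely many nonzero eigenvalues each carrying the infinite-dimensional eigenspace $H_n$, ruling out $p(A) \in \mathcal{R}$ and establishing $\mathcal{A}\mathcal{A} \not\subset \mathcal{P}\mathcal{R}$. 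For $\mathcal{A}\mathcal{A} + \mathcal{K} \not\subset \mathcal{P}\mathcal{A}\mathcal{A}$ I would add the compact block-diagonal perturbation $K = \bigoplus_n \epsilon_n \operatorname{diag}(1/m)_{m \geq 1}$ with $\epsilon_n \to 0$ (compact as the norm-limit of its finite-rank truncations), so that $\sigma(A+K)$ accumulates at every $1/n$; for any nontrivial $p$, spectral mapping then places the infinite set $\{p(1/n) : p(1/n) \neq 0\}$ among the accumulation points of $\sigma(p(A+K))$, precluding $p(A+K) \in \mathcal{A}\mathcal{A}$. The main real obstacle is the Volterra tensor example for $\mathcal{R} \not\subset \mathcal{P}\mathcal{K}$, since natural finite-dimensional direct-sum constructions tend to become polynomially compact; the rest is ideal bookkeeping and spectral mapping.
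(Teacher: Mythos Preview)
Your argument is correct and for the three inclusions in (\ref{paaf}) and for $\mathcal{A}\mathcal{A}\not\subset\mathcal{P}\mathcal{R}$ it is essentially identical to the paper's proof. The differences lie in the other two counterexamples.

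For $\mathcal{R}\not\subset\mathcal{P}\mathcal{K}$ the paper does not construct an operator itself but quotes the Foias--Pearcy weighted backward shift on $\ell_2(\mathbb N)$ (a quasinilpotent no power of which is compact) together with the lemma from Han--Lee--Lee that a Riesz operator is polynomially compact iff some power is compact. Your Volterra tensor $Q=V\otimes I$ is a genuinely different and more self-contained example: you prove the reduction lemma yourself via the factorisation $p(z)=z^n s(z)$ and the invertibility of $s(Q)$, and the non-compactness of $Q^n=V^n\otimes I$ is immediate from the orthogonal sequence $(V^nf)\otimes e_k$. The trade-off is only that your Hilbert space is $L^2[0,1]\otimes\ell_2$ rather than $\ell_2(\mathbb N)$, which is harmless since the two are unitarily equivalent.

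For $\mathcal{A}\mathcal{A}+\mathcal{K}\not\subset\mathcal{P}\mathcal{A}\mathcal{A}$ the paper works on a single copy of $\ell_2$: it takes $K:e_j\mapsto \tfrac{1}{j}e_j$ and, writing $j=2^m(1+2n)$, sets $A:e_j\mapsto\tfrac{1}{m+1}e_j$, so that every $1/(m+1)$ becomes an accumulation point of $\sigma(A+K)$. Your block-diagonal construction $A=\bigoplus_n\tfrac1n I_{H_n}$ with $K=\bigoplus_n\epsilon_n\operatorname{diag}(1/m)$ achieves the same effect on $\bigoplus_n\ell_2$ and is equally valid; it has the advantage of reusing the operator from the $\mathcal{A}\mathcal{A}\not\subset\mathcal{P}\mathcal{R}$ example, while the paper's version stays on $\ell_2(\mathbb N)$ throughout.
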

\begin{proof}
Let $p$ be such that $p(A)$ is almost algebraic and $F$ of finite rank. Then
$p(A+F)=p(A) + G$ where $G$ is of finite rank and (\ref{paaf}) follows from (\ref{einiinsimppeli}).  The two  other claims in (\ref{paaf}) follow  similarly.

 Let now $A_j=\frac{1}{j}$ denote  the scalar multiplication in $\ell_2$ and let $A=\oplus A_j$. Then  all nonzero spectral points corresponds to poles of the resolvent and $A\in \mathcal A\mathcal A$. On the other hand,  with any nontrivial polynomial $p$ there are eigenvalues $1/j$ such that $p(1/j)\not=0$ and  for such the invariant subspace is  infinite dimensional. Hence $A\notin \mathcal P\mathcal R$.
 
Consider now the middle claim of (\ref{eitottakaan}).  We shall define two diagonal operators $A\in \mathcal A\mathcal A$ and $K\in \mathcal K$  such that for all    integers $m\ge 0 $   the point $1/{(m+1)}$ is an accumulation point of  $\sigma(A+K)$.  Thus, for any  nontrivial polynomial $p$  the spectrum of $p(A+K)$ contains an infinite amount of accumulation points and hence $A+K$  is  not in $\mathcal P \mathcal A \mathcal A$.  In fact, for $K$ we choose simply the diagonal operator $K$ mapping $e_j \mapsto \frac{1}{j} e_j$.
In order to define $A$, notice that every positive integer $j$ can uniquely be given by a pair $(m,n)$ in the form
$$
j=2^m (1+2n).
$$
We  denote the  $j^{th}$ coordinate vector  by $e_{(m,n)}$ and can define another, noncompact, almost algebraic  diagonal operator by
$$
A: e_{(m,n)} \mapsto \frac{1}{m+1} \ e_{(m,n)}.
$$
Hence $A+K$ is again a diagonal operator such that
$$
A+K: e_{(m,n)} \mapsto [\frac{1}{m+1} + \frac{1}{2^m (1+2n)} ] \ e_{(m,n)},
$$
which shows that $1/(m+1)$ is an accumulation point  of eigenvalues for every $m\ge 0$, completing the example.
 
 Finally,  $\mathcal R \not\subset \mathcal P\mathcal K$ is due to [26].  A Riesz operator in a separable Hilbert space is polynomially compact  if  and only if  there is a positive integer $n$ such that $R^n$ is compact, Lemma 2 [26].  Foias and Pearcy [19], [43]  have   an example of a quasinilpotent operator $T$  in $\ell_2(\mathbb N)$ such that $T^n$ is not compact for any $n\ge1$.   The operator is  a weighted backward shift $ Te_1=0$ while $Te_{j+1}=\omega_j e_j$ where
the weight sequence $\{\omega_j\}$ begins
$$
\{2^{-1}, 2^{-4},2^{-1}, 2^{-16}, 2^{-1}, 2^{-4}, 2^{-1}, 2^{-64}, 2^{-1}, 2^{-4}, \cdots\}.
$$

\end{proof}

\begin{remark}  Recall that in $X_{AH}$, the infinite dimensional Banach space constructed by Argyros and Haydon [8],  all bounded operators are sums of  a multiple of identity plus a compact one.  Hence  $\mathcal P \mathcal K =\mathcal B$ and only the inclusions  $\mathcal F \subset \mathcal A \subset \mathcal P \mathcal K$ and  $\mathcal F \subset \mathcal K \subset \mathcal P \mathcal K$   are proper.

\end{remark}

We  begin with a  structure theorem for polynomially almost algberaic operators, as polynomially Riesz and polynomially compact   ones are sublasses of  these.

\begin{theorem} \label{AAstructuuri}A bounded operator $A$ in a Banach space $X$ is polynomially almost algebraic if and only if there exists a decomposition 
$X=X_0 \oplus \cdots \oplus X_d$ such that $AX_i \subset X_i$ and denoting by $A_i$ the restriction of $A$ to $X_i$, $A_0$  is algebraic while for $i=1,\dots, d$,  there exist points $\lambda_i$ such that each $A_i -\lambda_i$ is almost algebraic in $X_i$.
\end{theorem}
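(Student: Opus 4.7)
The plan is to prove the two implications separately, using throughout the characterisation (Theorem 5.7.2) that an operator is almost algebraic precisely when its spectrum accumulates only at $0$ and every nonzero spectral point is a pole of finite order of the resolvent.

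For the \emph{if} direction, given the decomposition, set $p(z) := \prod_{i=1}^{d}(z - \lambda_i)$ (after collapsing any repetitions among the $\lambda_i$). Since the $X_i$ are $A$-invariant, $p(A)$ respects the decomposition and it suffices to show that $p(A)|_{X_i}$ is almost algebraic on each summand. On $X_0$ this is automatic, as $p(A_0)$ is a polynomial in the algebraic $A_0$. On $X_i$ for $i \geq 1$, $\sigma(A_i)$ accumulates only at $\lambda_i$ (since $A_i - \lambda_i$ is almost algebraic), so $p(\sigma(A_i))$ accumulates only at $p(\lambda_i)=0$; for each nonzero $p(\mu) \in \sigma(p(A_i))$ the point $\mu$ is a finite-order pole of the resolvent of $A_i$, so $(A_i - \mu)^{n}=0$ on the Riesz subspace for $\{\mu\}$ for some $n$, and factoring $p(z)-p(\mu)=(z-\mu)g(z)$ yields $(p(A_i)-p(\mu))^{n}=0$ on that subspace. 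Summing over the finitely many preimages of $p(\mu)$ inside $\sigma(A_i)$ gives the finite pole order of $p(A_i)$ at $p(\mu)$.

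For the \emph{only if} direction, suppose $p(A)$ is almost algebraic and let $\lambda_1,\ldots,\lambda_d$ be the distinct roots of $p$. Since $\sigma(p(A)) = p(\sigma(A))$ accumulates only at $0$, the accumulation points of $\sigma(A)$ lie in $\{\lambda_1,\ldots,\lambda_d\}$; in particular $\sigma(A)$ is at most countable. Pick pairwise disjoint open disks $D_i\ni \lambda_i$ of radius $r_i < \tfrac{1}{2}\min_{j\neq i}|\lambda_i-\lambda_j|$ whose boundaries miss $\sigma(A)$ (generic radii work). Then $\sigma(A)\setminus\bigcup_i D_i$ is finite, and the Riesz projections $P_i := \frac{1}{2\pi i}\oint_{\partial D_i}(\lambda-A)^{-1}\,d\lambda$ for $i\geq 1$ together with $P_0 := I - \sum_{i\geq 1}P_i$ are commuting idempotents summing to $I$; set $X_i:=P_iX$. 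The key uniform observation is that for any $\mu\in\sigma(A)$ with $p(\mu)\neq 0$, the point $p(\mu)$ is a nonzero isolated point of $\sigma(p(A))$, hence a pole of finite order $n$ of the resolvent of $p(A)$; on the Riesz subspace of $A$ for $\{\mu\}$ (a summand of the Riesz subspace of $p(A)$ for $\{p(\mu)\}$) we therefore have $(p(A)-p(\mu))^{n}=0$, and factoring $p(z)-p(\mu)=(z-\mu)^{m}q(z)$ with $q(\mu)\neq 0$ makes $q(A)$ invertible there, yielding $(A-\mu)^{mn}=0$. Applied to the finitely many $\mu\in\sigma(A|_{X_0})$ (all with $p(\mu)\neq 0$) this produces a single polynomial annihilating $A|_{X_0}$, so $A_0$ is algebraic; applied to each $\mu\in\sigma(A|_{X_i})\setminus\{\lambda_i\}$ (all with $p(\mu)\neq 0$, since only $\lambda_i$ among the roots of $p$ lies in $D_i$) it exhibits every such point as a finite-order pole of the resolvent of $A_i$, so $\sigma(A_i-\lambda_i)$ accumulates only at $0$ with every nonzero point a finite-order pole, i.e., $A_i-\lambda_i$ is almost algebraic.

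The main technical obstacle I anticipate is the pole-order transfer in the third paragraph: one must verify cleanly that the Riesz subspace of $p(A)$ at the isolated point $p(\mu)$ decomposes as a finite direct sum of the Riesz subspaces of $A$ over the preimages of $p(\mu)$ in $\sigma(A)$, and that on each summand the ``other factor'' $q(A)$ is genuinely invertible. Everything else is standard holomorphic functional calculus combined with the meromorphy characterisation of $\mathcal A\mathcal A$.
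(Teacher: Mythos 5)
Your proof is correct, and its skeleton -- spectral projections that isolate a neighbourhood of each root of $p$ together with a complementary piece of $\sigma(A)$ carrying only finitely many spectral points -- is the same as the paper's (the paper uses the components of the $p$-preimage of a small neighbourhood of $0$, you use small disks around the roots; this is cosmetic). The genuine difference is in how meromorphy is transported through $p$. The paper does this globally and tersely: in one direction it simply asserts that the spectral points of $A$ outside the $D_j$ are finitely many poles, and in the other it writes $(w-p(A))^{-1}=\frac{1}{2\pi i}\int_\gamma \frac{1}{w-p(\lambda)}(\lambda-A)^{-1}\,d\lambda$ and defers the analytic work to the proof of Theorem 5.9.2 in [33]. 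You instead carry out the transfer pointwise, in both directions, using only the spectral mapping theorem, the composition rule $e(p(A))=(e\circ p)(A)$ for Riesz projections at isolated spectral points, and the equivalence between a finite-order pole and nilpotency of the operator minus the spectral value on the corresponding Riesz subspace. What this buys is a self-contained argument with explicit pole-order bounds ($mn$, respectively $\max_j n_j$ over the finitely many preimages), at the cost of the verification you yourself flag -- that the Riesz subspace of $p(A)$ at $p(\mu)$ is the direct sum of the Riesz subspaces of $A$ over the preimages of $p(\mu)$ in $\sigma(A)$ -- which does follow from the composition rule applied to an idempotent-valued germ, so the gap is only one of bookkeeping, not of substance. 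Two small points to tidy: in the \emph{if} direction exclude the degenerate case $d=0$ (there $A=A_0$ is algebraic and its minimal polynomial serves as $p$, since your product over an empty index set would be constant), and record explicitly that a finite direct sum of almost algebraic operators is almost algebraic, which is immediate from the meromorphy characterisation you quote because the resolvent of the direct sum is the direct sum of the resolvents.
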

\begin{proof}
Assume $p(A)$ is almost algebraic and denote by $\lambda_1, \cdots, \lambda_d$ the roots of $p(\lambda)=0.$  We need to show that there exist  invariant subspaces  $X_i$ for $A$ such that each restriction $A_i - \lambda_i$ is almost algebraic in $X_i$.  

Thus, $(w-p(A))^{-1}$ is meromorphic  for $w\not=0$. Thus there exists a small neighbourhood $V$ of $0$ such that $\partial V$ does not contain any singularities of $ (w-p(A))^{-1}$ and such that $p^{-1}(\sigma(p(A)))$
splits into different components $D_j$, each containing one root $\lambda_j$ of $p$. We may also assume that the closures $\overline D_j$ do not intersect.  Then put $D_0= \mathbb C \setminus \bigcup \overline D_j$.  Let    $P_j$ to denote, for $j=0, \dots, d$, the spectral projection of $A$ wrt the spectrum inside $D_j$ and put $X_j=P_jX$.  Then  $A$ restricted to $X_0$ is algebraic, as $D_0$ contains only a finite number of poles while, when restricted to $X_j$ for $j\not=0$,  all singularities are inside $D_j$ with $\lambda_j$ as the only possible accumulation point.  Thus $A-\lambda_j$ restricted to $X_j$ is almost algebraic.  

In the other  direction, by assumption,  all singularities of $(z-A)^{-1}$  are poles in $\mathbb C \setminus \{\lambda_1, \cdots, \lambda_d\}$.  Consider $ w\mapsto (w-p(A))^{-1}$ where $p(z) = \prod (z-\lambda_j)$.
We should conclude that all nonzero singularities are poles.
This can be based on the Cauchy  integral
\begin{equation}
(w-p(A))^{-1}= \frac{1}{2\pi i}\int_\gamma \frac{1}{w-p(\lambda)} (\lambda -A)^{-1} d\lambda,
\end{equation}
where $\gamma$ surrounds the spectrum of $A$ and $w\not=p(\lambda)$ for all $\lambda$ inside and on $\gamma$.  In fact, fix a nonzero $w_0$ in $\sigma(p(A))$. 
Now follow the proof of Theorem 5.9.2, [33].

\end{proof}

\begin{remark}  This is essentially a reformulation of Theorem 5.9.2 in [33]. In [16]   this has appeared as well.

\end{remark}

The structure theorems for polynomially compact and polynomially Riesz operators can be formulated  by specifying in Theorem \ref{AAstructuuri} the operators $A_i -\lambda_i$ as compact and Riesz, respectively.  The structure theorem for polynomially compact operators is due to Gilfeather [20].  Similar  result for Riesz operators in Banach spaces is contained in [52].

Here is the original formulation  of Gilfeather.

\begin{theorem} {\rm (Gilfeather [20] )}  Let $A$ be a polynomially compact operator with minimal polynomial $p(z) = (z- \lambda_1)^{n_1} \cdots (z- \lambda_k)^{n_k}$. Then the Banach space $X$ is decomposed into the direct sum $X = X_1 \oplus \cdots \oplus X_k$ and $A= A_1 \oplus \cdots \oplus A_k$  where $A_i$  is the restriction of $X$ to $X_i$.  The operators $(A_i -\lambda_i)^{n_i}$ are all compact. The spectrum of $A$  consists of countably many points with $\{\lambda_1, \dots, \lambda_k\}$ as the only possible limit points and such that all but possibly $\{\lambda_1, \dots, \lambda_k\}$ are eigenvalues with finite dimensional generalized eigenspaces. Each point $\lambda_i \in \{\lambda_1, \dots, \lambda_k\}$  is either the limit of eigenvalues of $A$ or else  $A_i -\lambda_i$  is quasinilpotent with $X_i$  infinite dimensional.
\end{theorem}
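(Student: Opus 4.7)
\emph{Decomposition.} The plan is to build the block decomposition directly from Riesz spectral projections, deduce compactness of each $(A_i-\lambda_i)^{n_i}$ by dividing out the factors of $p$ that are invertible on each piece, and then read the block spectral picture from the compact-operator spectral theory. Since $p(A)$ is compact, $\sigma(p(A))=\{0\}\cup S$ with $S$ countable and accumulating only at $0$, so the spectral mapping inclusion $\sigma(A)\subset p^{-1}(\sigma(p(A)))$ makes $\sigma(A)$ countable with $\{\lambda_1,\dots,\lambda_k\}$ as its only possible accumulation points. Choose pairwise disjoint open discs $D_i\ni\lambda_i$ with $\partial D_i\cap\sigma(A)=\emptyset$ (possible because $\lambda_i$ is the only possible accumulation point inside $D_i$), and form the Riesz projections
\[
P_i=\frac{1}{2\pi i}\int_{\partial D_i}(\lambda-A)^{-1}\,d\lambda.
\]
Since $\sigma(A)\subset\bigcup_i D_i$, the $P_i$ are mutually annihilating and sum to $I$; set $X_i=P_iX$ and $A_i=A|_{X_i}$, so that $\sigma(A_i)\subset\overline{D_i}$ and $X=X_1\oplus\cdots\oplus X_k$, $A=A_1\oplus\cdots\oplus A_k$.

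\emph{Compactness of $(A_i-\lambda_i)^{n_i}$.} For $j\ne i$ we have $\lambda_j\notin\overline{D_i}\supset\sigma(A_i)$, hence $(A_i-\lambda_j)^{n_j}$ is invertible in $\mathcal B(X_i)$. The restriction of the compact operator $p(A)$ to the $A$-invariant subspace $X_i$ equals
\[
p(A_i)=(A_i-\lambda_i)^{n_i}\prod_{j\ne i}(A_i-\lambda_j)^{n_j}
\]
and is compact; multiplying by the bounded inverse of the commuting product over $j\ne i$ isolates $(A_i-\lambda_i)^{n_i}$ as compact. Minimality of $p$ prevents any exponent $n_i$ from being reduced.

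\emph{Spectral structure and dichotomy.} Let $B_i:=(A_i-\lambda_i)^{n_i}$. By Riesz--Schauder theory, $\sigma(B_i)=\{0\}\cup T_i$ with $T_i$ a countable set of finite-algebraic-multiplicity eigenvalues accumulating only at $0$. Pulling back via the polynomial $\mu\mapsto(\mu-\lambda_i)^{n_i}$ and matching Riesz projections through holomorphic functional calculus shows that every preimage in $\sigma(A_i)$ of a nonzero point of $T_i$ is an isolated eigenvalue of $A_i$ with finite-dimensional generalized eigenspace, and these preimage eigenvalues accumulate at most at $\lambda_i$. At $\lambda_i$ itself either those eigenvalues cluster there, or $\lambda_i$ is isolated in $\sigma(A_i)$; in the latter case $\sigma(B_i)=\{0\}$ and the compact operator $B_i$ is quasinilpotent, so $A_i-\lambda_i$ is quasinilpotent on $X_i$ (this is informative exactly when $X_i$ is infinite-dimensional, the finite-dimensional case giving a trivial algebraic block). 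I expect this last dichotomy to be the main bookkeeping hurdle --- checking precisely enough that the Riesz projections of $A_i$ correspond to those of $B_i$ so finite-dimensionality of generalized eigenspaces transfers, and phrasing the two alternatives so they are cleanly exhaustive. Everything upstream is standard holomorphic functional calculus together with the Riesz--Schauder decomposition.
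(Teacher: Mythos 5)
The paper itself does not prove this statement --- it quotes it from Gilfeather [20]; its own closely related argument is the proof of Theorem \ref{AAstructuuri}, which uses spectral projections attached to the components of $p^{-1}(\sigma(p(A)))$ around the roots of $p$ and parks everything else in an extra algebraic block $X_0$. Measured against what the quoted statement actually requires, your proposal has a genuine gap already at the decomposition step. You choose pairwise disjoint discs $D_i\ni\lambda_i$ whose boundaries avoid $\sigma(A)$ and then assert $\sigma(A)\subset\bigcup_i D_i$; this does not follow from your choice. In general $\sigma(A)$ contains isolated eigenvalues $\mu$ with $p(\mu)\neq 0$ (these are exactly the points producing the nonzero eigenvalues of the compact operator $p(A)$), and such points need not lie in any admissible small discs around the $\lambda_i$; then $P_1+\cdots+P_k\neq I$ and you have not decomposed $X$. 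The repair is to partition $\sigma(A)$ itself into $k$ clopen subsets $S_i\ni\lambda_i$ (possible because $\lambda_1,\dots,\lambda_k$ are the only accumulation points: assign each isolated point to, say, the nearest $\lambda_i$ and check closedness) and take the Riesz projections of these spectral sets along general Cauchy contours --- discs are neither forced by the construction nor always adequate as shapes. Your compactness step (inverting the commuting factors $(A_i-\lambda_j)^{n_j}$, $j\neq i$, on $X_i$) and the transfer of finite-rank Riesz projections from $B_i=(A_i-\lambda_i)^{n_i}$ back to $A_i$ are fine once the blocks exist.

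The second gap is the one you yourself flagged, and it is not mere bookkeeping: from ``$\lambda_i$ is isolated in $\sigma(A_i)$'' you conclude $\sigma(B_i)=\{0\}$, i.e.\ $\sigma(A_i)=\{\lambda_i\}$, but this is a non sequitur, because the block $X_i$ must also absorb whatever stray isolated eigenvalues were assigned to it, and then $B_i$ is not quasinilpotent. Concretely, let $N$ on $\ell_2$ map $e_{2j-1}\mapsto e_{2j}$, $e_{2j}\mapsto 0$ (so $N^2=0$, $N$ not compact) and put $A=N\oplus 2$ on $\ell_2\oplus\mathbb C$; the minimal polynomial is $z^2$, so $k=1$ and $X_1=X$ is forced, $0$ is isolated in $\sigma(A)=\{0,2\}$ and is not a limit of eigenvalues, yet $A_1-\lambda_1=A$ is not quasinilpotent. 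So to get the dichotomy in the quoted form you must explain how the finitely many eigenvalues away from the $\lambda_i$ are routed into blocks whose $\lambda_j$ actually is a limit of eigenvalues (and the example shows the formulation needs exactly this care); your argument as written does not establish that, and this is the point where your route and the paper's (which sidesteps it by allowing the separate block $X_0$ in Theorem \ref{AAstructuuri}) genuinely diverge.
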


If $A$ is algebraic and $K$ is compact, then $T=A+K$ is polynomially compact. 
In fact, if $p$ is the minimal polynomial of $A$, then $p(T)= p(A)+ C = C$ where $C$ is compact.    Catherine L. Olsen  [42]  showed  1971 that in separable Hilbert spaces the decomposition is always possible.  

\begin{theorem}\label{olsen} {\rm (Olsen [42])} 
 Each  polynomially compact $A$ in a separable Hilbert space is the sum of an algebraic operator plus a compact one.
\end{theorem}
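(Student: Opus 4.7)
The plan is to combine Gilfeather's structure theorem with a lifting lemma specific to Hilbert spaces. By Gilfeather's theorem, the separable Hilbert space decomposes as $H = X_1 \oplus \cdots \oplus X_k$, an algebraic direct sum of closed $A$-invariant subspaces, with $A = A_1 \oplus \cdots \oplus A_k$ and each $(A_i - \lambda_i)^{n_i}$ compact on $X_i$. This reduces the theorem to the following single-block statement, which is the crux: if $X$ is a separable Hilbert space and $T \in \mathcal B(X)$ satisfies $T^n$ compact, then there exist $N, K \in \mathcal B(X)$ with $T = N + K$, $N^n = 0$, and $K$ compact.

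Granting this statement, apply it to $T_i := A_i - \lambda_i I_{X_i}$ on $X_i$ to obtain $T_i = N_i + K_i$ with $N_i^{n_i} = 0$ and $K_i$ compact. Then $A_i = (\lambda_i I_{X_i} + N_i) + K_i$, where the first summand is annihilated by $(z-\lambda_i)^{n_i}$. Taking direct sums, $B := \bigoplus_i (\lambda_i I_{X_i} + N_i)$ is algebraic (annihilated by the minimal polynomial $p$) and $K := \bigoplus_i K_i$ is compact (a finite direct sum of compacts); thus $A = B + K$ is the desired decomposition.

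The main obstacle is the single-block statement, which asserts that a nilpotent element of the Calkin algebra $\mathcal B(X)/\mathcal K(X)$ lifts to a nilpotent of the same order in $\mathcal B(X)$. I would attack it by exploiting that $T^n$ compact forces the essential spectrum of $T$ to equal $\{0\}$; combined with Voiculescu or Weyl--von Neumann--Berg type approximation, this allows one to inductively construct an increasing chain of finite-dimensional subspaces on which $T$ is, up to finite-rank corrections, of a standard $n$-nilpotent form. The delicate step is to coordinate these corrections so that their sum converges in norm to a single compact operator $K$ for which $(T-K)^n$ vanishes \emph{exactly}, rather than merely modulo a compact operator. This exact-vanishing requirement is what distinguishes Olsen's theorem from the immediate observation that $T$ is a compact perturbation of \emph{some} operator which is nilpotent modulo compacts, and it is the reason the result is genuinely Hilbert-space specific, relying on the abundance of orthogonal projections available in $H$.
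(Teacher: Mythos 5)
Your reduction via Gilfeather's structure theorem is correct as far as it goes: the spectral (Riesz) projections give a topological direct sum $H = X_1\oplus\cdots\oplus X_k$ of $A$-invariant subspaces with $(A_i-\lambda_i)^{n_i}$ compact, and if each block could be written as $\lambda_i+N_i+K_i$ with $N_i^{n_i}=0$ and $K_i$ compact, then summing the blocks does produce an algebraic-plus-compact decomposition of $A$. The problem is that everything after that reduction is missing. The ``single-block statement'' you isolate --- that $T^n$ compact implies $T=N+K$ with $N^n=0$ exactly and $K$ compact, i.e.\ that a nilpotent element of the Calkin algebra lifts to a nilpotent of the same order --- is not a lemma you can quote or dispatch in a sentence; it \emph{is} Olsen's theorem, up to the routine Gilfeather reduction you carried out. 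So in effect you have reduced the statement to itself and then described, in the conditional mood (``I would attack it by\dots''), a strategy whose hard step you yourself flag as unresolved: making the finite-rank corrections sum to a single compact $K$ with $(T-K)^n$ vanishing \emph{exactly}, not merely modulo compacts. Invoking ``Voiculescu or Weyl--von Neumann--Berg type approximation'' does not close this: those results concern approximation of normal operators or of representations, and knowing only that the essential spectrum of $T$ is $\{0\}$ gives nothing like the block-triangular structure with controlled corrections that the lifting requires. Olsen's actual argument is a delicate construction with increasing chains of projections, and none of that construction appears in your proposal.

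For what it is worth, the paper does not prove this statement either: it cites Olsen [42] (and [32] for the non-separable case) and uses the theorem as a black box, e.g.\ in the proof that $M_C\in\mathcal P\mathcal R$ iff $A,B\in\mathcal P\mathcal R$. So your write-up would be acceptable as a commentary on how the result reduces to the nilpotent-lifting lemma, but as a proof it has a genuine gap precisely at the point where all the content of the theorem lies.
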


  The decomposition holds in fact in all Hilbert spaces, [32], in the same way as West decomposition.

Using Corollary \ref{projektio}  we can have  quantitative bounds for decomposed parts of polynomially compact  operators. In fact, if $ B= p(A)$ is in the Schatten class $\mathcal S_p$ then 
$$
T_{\infty }( r, (1- z B)^{-1} ) \le  \frac{k+1} {p}  \| B \|_p^p  \   r^p  + k  \log (1+ r\|B\|)
$$
where $k$ is a nonnegative integer such that  $k< p \le k+1$ and $\| . \|_p$  denotes the Schatten norm,  see Theorem 6.5 in [35].    The following formulation is for all polynomially almost algebraic operators.




If  $A\in \mathcal P \mathcal A\mathcal A$  and  $p$  is the minimal polynomial such that $p(A)$ is almost algebraic,  denote by  $\lambda_j$ for $j=1, \cdots, d$  its roots.   Let then $\rho_0$ be small enough so that  each  
$$\gamma_\rho =\{\lambda \ : \ |p(\lambda)|=\rho\}$$ 
consists of $d$ components $\gamma_\rho^j$, each surrounding one root of $p$, when $\rho < \rho_0$.   Then denote the  contour of integration in defining the spectral projection $P_\rho$:
\begin{equation}\label{lemnisprojektio}
P_\rho = \frac{1}{2\pi i} \int_{\gamma_\rho} (\lambda-A)^{-1} d\lambda.
\end{equation}   
and further 
\begin{equation}\label{osaprojektio}
P_{\rho,j} = \frac{1}{2\pi i} \int_{\gamma_\rho^j} (\lambda-A)^{-1} d\lambda
\end{equation}
so that $P_\rho = \sum_{j=1}^d P_{\rho,j}.$   We conclude that, based on  the growth of $T(r, (1-w\ p(A))^{-1})$ that there exists a $\rho$ which allows us to obtain a bound for these projections.  We use  again the factorization  $p(\lambda)-p(A)
= (\lambda-A) q(\lambda,A)$ and  denote
$$
C_0 = \| \frac{1}{2\pi \rho} \int_{\gamma_\rho} \frac{q(\lambda,A) }{p(\lambda)} d\lambda \|
$$
and  
$$
C_j = \| \frac{1}{2\pi \rho} \int_{\gamma_\rho^j} \frac{q(\lambda,A) }{p(\lambda)} d\lambda \|.
$$
Then we have the following.
\begin{theorem} Let $A$ be polynomially almost algebraic and $p, \varepsilon, \theta, \rho_0$ as above.  Then there exists $\rho$ satisfying $\varepsilon \le \rho \le \sqrt \theta  \  \varepsilon$ such that $P_{\rho,j} $ and $P_\rho$ in (\ref{osaprojektio}) and (\ref{lemnisprojektio}) satisfy
$$
\| P_{\rho,j}\| \le C_j   M  \  {\text  and }  \ \| P_{\rho}\| \le C_0   M
$$
where
$$ \log M = C(\theta) T_\infty ( \theta/\varepsilon, (1-w \  p(A))^{-1}).
$$
With $X_0= (1-P_\rho)X$ and  $X_j= P_{\rho,j} X$ for $j=1,\cdots, d$  the space gets splitted into invariant subspaces, $X= X_0 \oplus \cdots \oplus X_d$, such that $A$ restricted to $X_0$ is algebraic with ${\rm deg} A_0 \le d  \  n(\rho,p(A))$   while $A-\lambda_j$ restricted to $X_j$ is almost algebraic.  Here $C(\theta)$  satisfy (\ref{theettavakio})  and 
$$
n(\rho, p(A)) < \frac{1}{\log \theta} T_\infty( \theta/\varepsilon, (1- w  \ p(A))^{-1}).
$$
\end{theorem}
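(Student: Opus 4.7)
The plan is to apply the preceding projection estimate theorem to the almost algebraic operator $B := p(A)$ and then transfer the resulting bounds back to $A$ through the substitution $w = p(\lambda)$ together with the polynomial identity
\begin{equation*}
p(\lambda) I - p(A) = (\lambda I - A)\, q(\lambda, A).
\end{equation*}

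First, since $p$ witnesses $A \in \mathcal P \mathcal A \mathcal A$, the operator $B$ is almost algebraic and $w \mapsto (1 - wB)^{-1}$ is entire meromorphic. I would apply the preceding theorem (the one giving $\log\|P_\rho\|\le C(\theta)T_\infty(\theta r,(1-zA)^{-1})$) to $B$ with $r = 1/\varepsilon$; this produces a radius $\rho \in [\varepsilon,\sqrt{\theta}\,\varepsilon]$ such that the circle $|w|=\rho$ avoids $\sigma(B)$, the Riesz projection $\tilde P := \frac{1}{2\pi i}\int_{|w|=\rho}(w-B)^{-1}\,dw$ satisfies
\begin{equation*}
\log \|\tilde P\| \le C(\theta)\, T_\infty\!\big(\theta/\varepsilon, (1-w\,p(A))^{-1}\big) =: \log M,
\end{equation*}
and the count $n(\rho, p(A)) = n(\rho, B)$ obeys (\ref{asteenestimaatti}) with $A$ replaced by $B$. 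This immediately yields the final bound on $n(\rho, p(A))$ in the statement.

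Second, because $\rho<\rho_0$, the lemniscate $\gamma_\rho = \{\lambda : |p(\lambda)|=\rho\}$ splits into $d$ disjoint Jordan components $\gamma_\rho^j$, each enclosing exactly one root $\lambda_j$ of $p$. Using $(\lambda-A)^{-1} = q(\lambda,A)\,(p(\lambda)-B)^{-1}$ on $\gamma_\rho$, I would rewrite
\begin{equation*}
P_\rho = \frac{1}{2\pi i}\int_{\gamma_\rho} q(\lambda, A)\,(p(\lambda)-B)^{-1}\,d\lambda,
\end{equation*}
and analogously for each $P_{\rho,j}$. Splitting $(p(\lambda)-B)^{-1} = p(\lambda)^{-1}(1 - B/p(\lambda))^{-1}$ and noting that $|p(\lambda)|=\rho$ on the lemniscate, I would absorb the operator-valued factor $q(\lambda,A)/p(\lambda)$ into the definitions of $C_0$ and $C_j$, and bound the residual Fredholm-form factor $(1-uB)^{-1}$ for $|u|=1/\rho$ by $M$ using the estimate on $\tilde P$ produced in the first step. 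This delivers $\|P_\rho\| \le C_0 M$ and $\|P_{\rho,j}\| \le C_j M$.

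Third, since $P_\rho = \sum_{j=1}^d P_{\rho,j}$ commutes with $A$, the splitting $X_0 := (1-P_\rho)X$, $X_j := P_{\rho,j}X$ yields an $A$-invariant decomposition $X = X_0 \oplus X_1 \oplus \cdots \oplus X_d$. By the spectral mapping theorem, $\sigma(A|_{X_0}) = p^{-1}\!\big(\sigma(B) \cap \{|w|\ge \rho\}\big)$ is a finite set of at most $d\cdot n(\rho, p(A))$ poles of $(\lambda-A)^{-1}$, so $A|_{X_0}$ is algebraic with $\deg A_0 \le d\cdot n(\rho, p(A))$. On each $X_j$, $\sigma(A|_{X_j})$ sits inside the $\lambda_j$-component of $\{|p(\lambda)|<\rho\}$ and hence accumulates only at $\lambda_j$; since $(\lambda-A|_{X_j})^{-1}$ inherits meromorphy from $(1-wB)^{-1}$ through the factorization, the characterization of $\mathcal A \mathcal A$ by entire meromorphic resolvents gives $A|_{X_j} - \lambda_j \in \mathcal A \mathcal A$.

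The hard part will be the quantitative separation in the second step. The preceding theorem is phrased as a bound on the \emph{projection} $\tilde P$ of $B$, not directly on the resolvent $(w-B)^{-1}$; extracting from it enough control of $(1-uB)^{-1}$ on $|u|=1/\rho$ to reproduce exactly the form $C_0 M$ and $C_j M$, while tracking the $d$-to-$1$ map $\lambda \mapsto p(\lambda)$ across the components of $\gamma_\rho$, is the real work. Once this bookkeeping is done, the remaining structural statements are routine consequences of standard Riesz-projection and spectral-mapping theory.
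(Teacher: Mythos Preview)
Your plan coincides with the paper's own proof: write
\[
(\lambda-A)^{-1}=q(\lambda,A)\,(p(\lambda)-p(A))^{-1}=\frac{q(\lambda,A)}{p(\lambda)}\,(1-w\,p(A))^{-1},\qquad w=1/p(\lambda),
\]
apply the preceding projection/pole-count theorem to $B=p(A)$ with $r=1/\varepsilon$, and read off the degree bound from the fact that each pole $w_k$ of $(1-wB)^{-1}$ has at most $d$ preimages under $p$. The paper's proof is literally two sentences doing exactly this.

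One remark on the difficulty you flag in your last paragraph. You worry that the earlier theorem yields only a bound on the Riesz projection $\tilde P$ of $B$, not on the resolvent $(1-wB)^{-1}$ along $|w|=1/\rho$, and that passing from the former to the latter is the real work. In the paper this is a non-issue: the quoted result is Corollary~7.5 of [35], whose proof goes through a Cartan-type estimate that first bounds $\log\|(1-wB)^{-1}\|$ uniformly on a good circle and then integrates to get the projection bound. Thus the quantity $M$ with $\log M=C(\theta)\,T_\infty(\theta/\varepsilon,(1-wB)^{-1})$ is in fact a pointwise bound for the resolvent on $|w|=1/\rho$, and the constants $C_0,C_j$ absorb precisely the remaining factor $q(\lambda,A)/p(\lambda)$ along the lemniscate, as you anticipated. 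So the ``hard part'' you identify is already contained in the machinery being cited; the paper disposes of it with the phrase ``returning to the earlier discussion.''
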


\begin{proof} This follows from writing  with $w=1/p(\lambda)$ 
$$
(\lambda-A)^{-1} = q(\lambda,A) (p(\lambda)-p(A))^{-1} = \frac{q(\lambda,A)}{p(\lambda)} (1-w \ p(A))^{-1}
$$   
and  returning to the earlier discussion.  Note that the number of  poles $w_k$  of $(1- w \ p(A))^{-1}$  satisfying $|w_k | > \rho$ is given by $n(\rho,p(A))$ and $p^{-1}(w_k)$ contains at most $d$ points.

\begin{example}  {\bf Elastic Neumann-Poincar\'e operators}  give examples of "real life" polynomially compact operators.  In short, in bounded domains in dimensions 2 and 3 with smooth boundaries the operators are  polynomially compact   but not compact.  If the boundary has  corners,  continuous spectrum appears.  For details, see  Section 6  of  the survey article [1].   
\end{example}

\end{proof}

\section{Special results in Hilbert spaces}

\subsection{Special classes}

In this section  we assume  that the space is a separable Hilbert space.  The first observation to be made is that the classes $\mathcal X$  we have discussed  above are such that in Hilbert spaces $A \in \mathcal X$ typically imply $A^*\in \mathcal X$.    We start with a class where this does {\it not} hold.

\begin{definition}   A bounded operator $A$ in a separable Hilbert space is quasitriangular, if there exists an increasing sequence $\{P_n\}$ of finite rank  orthogonal projections converging pointwise to the identity such that
$$
\lim_{n\rightarrow\infty} \|(1-P_n)AP_n\| =0.
$$
We denote then $A \in \mathcal Q\mathcal T$. It is  biquasitriangular,
$A \in \mathcal Bi \mathcal Q\mathcal T$, if both $A$ and $A^*$ are quasitriangular.
\end{definition}

Quasitriangularity was introduced by Halmos in [25], where he in particular proved that
\begin{equation}
\mathcal Q \mathcal T + \mathcal K \subset \mathcal Q \mathcal T.
\end{equation}
He later proved,    that $\mathcal Q \mathcal A  \subset \mathcal Q \mathcal T$.  Since $A^*\in \mathcal Q \mathcal A $  if and only if $A\in \mathcal Q \mathcal A $  we have   in fact 
\begin{equation}
 \mathcal Q \mathcal A  \subset \mathcal B i\mathcal Q \mathcal T,
 \end{equation} 
and this is again proper.

Another  proper subclass of $\mathcal B i\mathcal Q \mathcal T$  is provided by normal operators and their perturbations.   We denote the normal operators by $\mathcal N_{orm}$ and these are in particular quasidiagonal.

\begin{definition}   A bounded operator $A$ in a separable Hilbert space is quasidiagonal, if there exists an increasing sequence $\{P_n\}$ of finite rank  orthogonal projections converging pointwise to the identity such that
$$
\lim_{n\rightarrow\infty} \|AP_n - P_n A\| =0.
$$
We denote then $A \in \mathcal Q\mathcal D$.  
\end{definition}

The following chain holds
\begin{equation}
\mathcal N_{orm} \subset \mathcal N_{orm}+\mathcal K \subset \mathcal Q\mathcal D \subset \mathcal B i\mathcal Q \mathcal T.
\end{equation}

\begin{theorem} \label{Voi} {\rm (Voiculescu [49])}

In separable Hilbert spaces the norm closure of algebraic operators equals the biquasitriangular ones: $$ {\rm cl} \ \mathcal A=\mathcal B i\mathcal Q \mathcal T .$$
\end{theorem}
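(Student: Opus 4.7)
The plan is to prove the two inclusions separately. For $\mathrm{cl}\,\mathcal{A}\subseteq\mathcal{B}i\mathcal{Q}\mathcal{T}$ I would start from the observation that every algebraic operator has finite spectrum, hence vanishing logarithmic capacity, so by Halmos' theorem $\mathcal{A}\subseteq\mathcal{Q}\mathcal{A}$; combined with the chain $\mathcal{Q}\mathcal{A}\subseteq\mathcal{B}i\mathcal{Q}\mathcal{T}$ already recorded in this section, this gives $\mathcal{A}\subseteq\mathcal{B}i\mathcal{Q}\mathcal{T}$. To close the inclusion under norm limits I would show $\mathcal{B}i\mathcal{Q}\mathcal{T}$ is norm-closed: given $A_n\to A$ with $A_n\in\mathcal{Q}\mathcal{T}$, for each $\varepsilon>0$ pick $n$ large with $\|A-A_n\|<\varepsilon/2$ and then a finite-rank orthogonal projection $P$ from the defining sequence for $A_n$ with $\|(1-P)A_n P\|<\varepsilon/2$, which forces $\|(1-P)AP\|<\varepsilon$; a routine separability argument arranges such projections into an increasing sequence converging strongly to $I$, so $A\in\mathcal{Q}\mathcal{T}$, and the same applied to $A^*$ closes $\mathcal{B}i\mathcal{Q}\mathcal{T}$.

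For the deeper reverse inclusion $\mathcal{B}i\mathcal{Q}\mathcal{T}\subseteq\mathrm{cl}\,\mathcal{A}$ I would invoke the Apostol--Foias--Voiculescu spectral characterization of biquasitriangularity: $A\in\mathcal{B}i\mathcal{Q}\mathcal{T}$ if and only if ${\rm ind}(A-\lambda)=0$ whenever $A-\lambda$ is semi-Fredholm. This recasts biquasitriangularity as a purely spectral/index-theoretic condition and rules out the only genuine obstruction to algebraic approximation, namely Fredholm points of nonzero index (as exemplified by the unilateral shift, which sits at positive distance from every algebraic operator).

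Given such $A$ and $\varepsilon>0$ the construction of an algebraic approximant proceeds in two stages. First, cover $\sigma(A)$ by finitely many discs of radius comparable to $\varepsilon$ and form the associated Riesz contour-integral projections in the spirit of Corollary~\ref{projektio}, decomposing $A$ as a finite direct sum $\bigoplus_{i=1}^N A_i$ with each $\sigma(A_i)$ clustered near a single point $\lambda_i$; biquasitriangularity and the vanishing-index hypothesis pass to each summand $A_i-\lambda_i$. Second, approximate each $A_i$ in norm by an algebraic operator using Voiculescu's non-commutative Weyl--von Neumann theorem: the theorem, together with the index-$0$ condition, allows a compact perturbation of $A_i$ to a direct sum of finite-dimensional Jordan blocks, which can then be norm-approximated by a truly finite-dimensional algebraic operator. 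Since a finite direct sum of algebraic pieces is algebraic (the product of the minimal polynomials annihilates the whole), concatenating the $N$ approximants yields an algebraic operator within $O(\varepsilon)$ of $A$.

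The first stage is essentially routine given the spectral-growth technology already developed in this paper; the main obstacle is the second stage. Reducing a single biquasitriangular summand with spectrum in a small disc to a norm-limit of nilpotents is the deep core of the theorem and requires both the non-commutative Weyl--von Neumann theorem and essential use of the vanishing Fredholm index hypothesis --- this is precisely the ingredient that distinguishes $\mathcal{B}i\mathcal{Q}\mathcal{T}$ from the strictly larger class $\mathcal{Q}\mathcal{T}$ in the closure statement, and without it the shift-type obstructions would prevent algebraic approximation.
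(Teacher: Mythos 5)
The paper does not prove this statement at all: it is quoted as Voiculescu's theorem (the relevant reference is [50], ``Norm-limits of algebraic operators''), so the only fair comparison is between your sketch and the actual literature. Your easy half is fine in spirit: algebraic operators are quasialgebraic, $\mathcal Q\mathcal A\subset \mathcal Bi\mathcal Q\mathcal T$ is recorded in the paper, and norm-closedness of $\mathcal Q\mathcal T$ (hence of $\mathcal Bi\mathcal Q\mathcal T$) follows from the continuity of Halmos' quantity $q(A)=\liminf_P\|(I-P)AP\|$, since $q(A)\le q(A_n)+\|A-A_n\|$; your projection-juggling is just a more laborious version of that.

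The hard inclusion $\mathcal Bi\mathcal Q\mathcal T\subset {\rm cl}\,\mathcal A$ is where your outline breaks. Your first stage --- ``cover $\sigma(A)$ by finitely many discs of radius comparable to $\varepsilon$ and form the associated Riesz contour-integral projections in the spirit of Corollary~\ref{projektio}'' --- is not available for a general biquasitriangular operator. Riesz projections exist only for spectral sets, i.e.\ when the contour misses $\sigma(A)$, and Corollary~\ref{projektio} applies to almost algebraic operators, whose resolvents are meromorphic and whose spectra accumulate only at $0$. A typical $A\in\mathcal Bi\mathcal Q\mathcal T$ (for instance a normal operator whose spectrum is the closed unit disc) has connected spectrum with no nontrivial spectral subsets, so no such finite direct-sum decomposition into pieces with spectrum in small discs exists; any honest proof must first perturb or model $A$ (Apostol's triangular representation, the Apostol--Morrel simple models, or similarity/compact-perturbation results) before spectral localization is possible, and that is already a substantial part of the real argument. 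Your second stage then asserts that Voiculescu's Weyl--von Neumann theorem plus index zero gives a compact perturbation to a direct sum of finite-dimensional Jordan blocks which ``can then be norm-approximated by a truly finite-dimensional algebraic operator.'' A compact perturbation of unspecified norm gives no norm approximation, and an infinite direct sum of finite Jordan blocks is in general not algebraic (block sizes and eigenvalues need not be uniformly bounded), nor is it obviously a norm-limit of algebraic operators --- that last claim is essentially the theorem you are trying to prove, restated. So the genuinely deep content (passing from the index-zero spectral picture to actual norm approximation by algebraic, equivalently by finite-spectrum, operators) is missing rather than proved; as it stands the proposal is a plausible reading guide to Voiculescu's and Apostol--Foias--Voiculescu's papers, not a proof.
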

Observe that the set of algebraic operators  of  uniformly bounded degree is,  on the other hand, closed.
See more e.g. [27].  

An important subclass of $\mathcal Q \mathcal T$ consists of  the so called  {\it thin} operators, that is sums,  of scalar and compact operators.  Douglas and Pearcy showed that  a bounded operator $A$ is thin iff
$$
Q(A)= \limsup_{P} \| (I-P)AP\| =0,
$$
where $P$ runs over all  increasing sequences of finite rank orthogonal projections, while  $A$ being quasitriangular  can be written  likewise as
$$
q(A)= \liminf_{P} \| (I-P)AP\| =0. 
$$
These were considered in 1970's  and classifying nonquasitriangular operators the ratio of $q(A)/Q(A)$  turned out  important;   on these developments see e.g. [18].  Observe that it is essential that one requires the sequence of projections to  be ordered.  In fact for  every bounded $A$ and any $n$ and $\varepsilon$ there exists a  rank-n orthogonal projector  $P_n$ such that $ \| (I-P_n)AP_n\| < \varepsilon$, [25]. 

To end this list of particular classes, we  still mention the following results. 

\begin{theorem} {\rm (Apostol, Voiculescu, [7])} 

 In separable Hilbert spaces  every quasinilpotent bounded operator is a norm limit of nilpotent ones: $ \mathcal Q \mathcal N \subset {\rm cl } \ \mathcal N$.
\end{theorem}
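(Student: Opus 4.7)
The plan is to deduce this from the Apostol-Foia\c{s}-Voiculescu characterization of the norm closure of the nilpotents in $\mathcal B(H)$ on a separable infinite-dimensional Hilbert space $H$. The AFV theorem states that $T \in {\rm cl}\,\mathcal N$ if and only if (a) both the spectrum $\sigma(T)$ and the essential spectrum $\sigma_e(T)$ are connected and contain $0$, and (b) $T$ is biquasitriangular, i.e.\ ${\rm ind}(T - \lambda) = 0$ whenever $T - \lambda$ is semi-Fredholm.

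The verification for $Q \in \mathcal Q \mathcal N(H)$ is then straightforward. Since $\sigma(Q) = \{0\}$, condition (a) for the spectrum is trivial; and $\sigma_e(Q)$ is a nonempty subset of $\sigma(Q) = \{0\}$ (nonempty because $\dim H = \infty$), so $\sigma_e(Q) = \{0\}$ as well, again connected and containing $0$. For (b), I would use the inclusion chain $\mathcal Q \mathcal N \subset \mathcal R \subset \mathcal A \mathcal A \subset \mathcal Q \mathcal A \subset \mathcal B i \mathcal Q \mathcal T$ already recorded in the paper (quasinilpotent operators are Riesz because every nonzero ``spectral point'' is vacuously a pole with a finite-dimensional invariant subspace), which yields biquasitriangularity of $Q$ immediately. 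As a hands-on alternative: for $\lambda \neq 0$, $Q - \lambda$ is invertible with index $0$; if $Q$ itself were semi-Fredholm, then the open semi-Fredholm resolvent set would contain $\mathbb C \setminus \{0\}$ and hence all of $\mathbb C$, a connected set on which the index is locally constant, forcing ${\rm ind}(Q) = 0$.

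Having verified the AFV conditions, one concludes $Q \in {\rm cl}\,\mathcal N$.

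The main obstacle is of course establishing the AFV characterization itself -- a deep structural result in the approximation theory of Hilbert-space operators, which is the substantive content of [7]. A direct constructive proof in the spirit of [7] would, for each $\varepsilon > 0$, exhibit a finite Schauder-type decomposition of $H$ in which $Q$ is block upper triangular with small diagonal blocks, and then nullify those diagonal blocks to produce a nilpotent operator within $\varepsilon$ of $Q$. The delicate point is arranging the decomposition so that the diagonal blocks are genuinely small in norm; this is where the quasinilpotency $\|Q^n\|^{1/n} \to 0$ enters decisively. The AFV route bypasses this difficulty via spectrum-and-index computations, at the cost of being less explicit.
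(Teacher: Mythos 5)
The paper offers no argument for this statement at all: it is quoted as a known theorem with the attribution to Apostol and Voiculescu [7], so there is no internal proof to match yours against. Your route is to derive it from the stronger Apostol--Foia\c{s}--Voiculescu characterization ${\rm cl}\,\mathcal N=\mathcal C$ (stated in the same section of the paper, attributed to [6]), and the verification you give is correct: for quasinilpotent $Q$ one has $\sigma(Q)=\{0\}$, hence $\sigma_e(Q)=\{0\}$ since the essential spectrum is nonempty in infinite dimensions, and biquasitriangularity follows either from the paper's own chain $\mathcal Q\mathcal N\subset\mathcal Q\mathcal A\subset\mathcal B i\mathcal Q\mathcal T$ (capacity of $\{0\}$ is zero, then Halmos's quasitriangularity result plus closure under adjoints) or from your index argument, which is sound. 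What this buys is a two-line deduction entirely within results the paper already quotes; what it costs is that all the substance is shifted onto a black box that is strictly deeper than the statement being proved, and one whose original proof is intertwined with precisely the quasinilpotent approximation problem (the answer to Halmos's question and the norm-closure papers form one body of work), so your argument is not an independent proof and runs the logical development backwards. You acknowledge this honestly in your last paragraph; the genuinely hard content --- producing, for each $\varepsilon>0$, a block upper triangular representation of $Q$ with norm-small diagonal blocks that can be nullified --- is exactly what [7] supplies and what neither the paper nor your proposal reproduces.
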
 

\begin{theorem} {\rm (Apostol, Foias [5]) }

   An operator $T\in \mathcal Bi\mathcal Q \mathcal T$ if and only if it is unitarily similar to  a block operator  $\begin{pmatrix} A&C\\
D&B \end{pmatrix}$
where $A$ and $B$ are block  diagonal (direct sums of finite size operators ) and at least one of $C$ and $D$ is compact.
\end{theorem}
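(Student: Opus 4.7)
The plan is to prove the two implications separately, the reverse direction being the more substantial one.

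For $(\Leftarrow)$, suppose $T$ has the stated block form; by applying the unitary swap interchanging $H_1$ and $H_2$ if necessary I may assume the compact off-diagonal entry is in position $(1,2)$, i.e.\ $C \in \mathcal{K}$. Since $A$ is an exact direct sum of finite-dimensional blocks on $H_1$, there exist finite-rank projections $R_{n,1} \uparrow I_{H_1}$ commuting \emph{exactly} with $A$, and similarly $R_{m,2} \uparrow I_{H_2}$ commuting with $B$. Take $R_n = R_{n,1} \oplus R_{m(n),2}$ with $m(n)$ growing rapidly. A block computation gives
\begin{equation*}
(1-R_n) T R_n = \begin{pmatrix} 0 & (I-R_{n,1}) C R_{m(n),2} \\ (I-R_{m(n),2}) D R_{n,1} & 0 \end{pmatrix}.
\end{equation*}
The $(1,2)$-entry tends to zero in norm because $C$ is compact and $I - R_{n,1} \to 0$ strongly on $H_1$, forcing $\|(I-R_{n,1}) C\| \to 0$. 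The $(2,1)$-entry equals $I - R_{m(n),2}$ applied to the finite-rank operator $DR_{n,1}$, whose image lies in a fixed finite-dimensional subspace of $H_2$; choosing $m(n)$ sufficiently large makes this summably small. Thus $T \in \mathcal{QT}$; an entirely symmetric argument on $T^*$ (using compactness of $C^*$ with the roles of $H_1, H_2$ swapped) gives $T^* \in \mathcal{QT}$, so $T \in \mathcal{BiQT}$.

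For $(\Rightarrow)$, starting from $T \in \mathcal{QT}$, pick finite-rank projections $P_n \uparrow I$ with $\|(1-P_n) T P_n\| < 2^{-n}$ and let $G_n = P_n - P_{n-1}$. The strictly sub-diagonal part in the staircase $\{G_n\}$,
\begin{equation*}
L := \sum_{n \ge 1} (1-P_n) T G_n,
\end{equation*}
is a norm-convergent series of finite-rank operators with $\|(1-P_n) T G_n\| \le 3 \cdot 2^{-n}$, hence $L \in \mathcal{K}$. The same reasoning applied to $T^*$ produces compactness of a super-diagonal part in its own basis. The core technical step is to merge these into a common staircase: by an inductive interlacing argument one constructs finite-rank projections $R_n \uparrow I$ satisfying both $\|(1-R_n) T R_n\| < 2^{-n}$ and $\|R_n T (1-R_n)\| < 2^{-n}$. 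In this common decomposition $T = D + K$ with $D = \bigoplus_n G_n T G_n$ block diagonal (finite blocks) and $K \in \mathcal{K}$.

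To extract the $2 \times 2$ block form, partition the index set into odd and even parts, $H_1 = \bigoplus_k G_{2k-1} H$ and $H_2 = \bigoplus_k G_{2k} H$. Then $D = A' \oplus B'$ with $A', B'$ exactly block diagonal (finite blocks), while the compact $K$ contributes entries to all four positions of the $2 \times 2$ representation of $T$ with respect to $H_1 \oplus H_2$. It remains to absorb the two diagonal compact contributions into a single off-diagonal entry: by inductively coarsening the block grouping (amalgamating consecutive $G_n$'s into larger but still finite-dimensional super-blocks) and applying local unitaries on each super-block, all compact corrections can be routed into, say, the $(2,1)$-block while the diagonal is restored to exact block-diagonal form. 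The theorem's allowance that only one of $C$, $D$ need be compact provides exactly the flexibility for this absorption. The principal obstacles are (i) the interlacing step producing a single sequence $R_n$ witnessing quasitriangularity of both $T$ and $T^*$, since naive projection joins do not preserve the required estimates; and (ii) the absorption step, which requires a careful inductive unitary construction to keep the diagonal blocks exactly block diagonal while relegating all compact perturbations to one off-diagonal corner.
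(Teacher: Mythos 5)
Your $(\Leftarrow)$ direction is essentially fine: with projections commuting exactly with the block diagonal entries and a suitably lopsided choice of the two indices, the two off-diagonal corners of $(1-R_n)TR_n$ are controlled by compactness of $C$ (respectively finite rank of $DR_{n,1}$), and the symmetric argument for $T^*$ works. (Note the paper itself offers no proof at all; it simply cites Apostol--Foias [5], so the substantive question is whether your forward direction can stand on its own.)

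It cannot: the ``core technical step'' of your $(\Rightarrow)$ direction is false. Producing a single increasing sequence of finite-rank projections $R_n\uparrow I$ with both $\|(1-R_n)TR_n\|\to 0$ and $\|R_nT(1-R_n)\|\to 0$ gives $\|TR_n-R_nT\|\to 0$, i.e.\ it asserts that every biquasitriangular operator is quasidiagonal, and your resulting intermediate claim $T=D+K$ with $D$ block diagonal and $K$ compact is precisely the Halmos characterization of quasidiagonality. But the inclusion $\mathcal Q\mathcal D\subset\mathcal Bi\mathcal Q\mathcal T$ is proper (this is stated in the paper's chain $\mathcal N_{orm}\subset\mathcal N_{orm}+\mathcal K\subset\mathcal Q\mathcal D\subset\mathcal Bi\mathcal Q\mathcal T$), so no interlacing argument can merge the two separate staircases coming from $T\in\mathcal Q\mathcal T$ and $T^*\in\mathcal Q\mathcal T$ into one; this is exactly the obstruction the Apostol--Foias theorem is designed to get around. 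Indeed, if $T$ were unitarily equivalent to a form with block-diagonal $A,B$ and \emph{both} corners compact, then $T$ would be block diagonal plus compact, hence quasidiagonal; the entire content of the theorem is the asymmetry that only one corner can be made compact, so any proof that first lands in $\mathcal Q\mathcal D$ has proved too much and must be wrong. The subsequent ``absorption by local unitaries'' step is also only gestured at, but the argument already fails before reaching it; a correct proof needs the genuinely deeper Apostol--Foias machinery (e.g.\ the spectral/index characterization of biquasitriangularity and the associated triangular representation theory), not a refinement of the Halmos staircase construction.
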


\begin{definition}  If $A\in \mathcal B i\mathcal Q \mathcal T$   has connected spectrum and essential spectrum with $0 \in \sigma_e(A)$, then we denote it as
$A\in \mathcal C$.
\end{definition}
This class is interesting as every operator in $\mathcal B \setminus \mathcal C $ has a nontrivial hyperinvariant subspace.

\begin{theorem} {\rm (Apostol, Foias, Voiculescu [6]) } 

In separable Hilbert spaces  we have
$ {\rm cl} \ \mathcal N=\mathcal C.
$
\end{theorem}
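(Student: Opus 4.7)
For the forward inclusion $\mathrm{cl}\,\mathcal{N}\subset\mathcal{C}$, suppose $N_k\to A$ in norm with each $N_k$ nilpotent. Since $\mathcal{N}\subset\mathcal{A}$ and $\mathrm{cl}\,\mathcal{A}=\mathcal{B}i\mathcal{Q}\mathcal{T}$ by Theorem~\ref{Voi}, the limit $A$ is biquasitriangular. For each nilpotent $N_k$, $\sigma(N_k)=\{0\}$ and the image of $N_k$ in the Calkin algebra is again nilpotent, hence $\sigma_e(N_k)\subset\{0\}$; upper semicontinuity of spectrum and of essential spectrum then forces $0\in\sigma_e(A)\subset\sigma(A)$. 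If $\sigma(A)$ were disconnected, I would pick disjoint open neighborhoods $U_1,U_2$ of its two pieces; the Riesz projections $P_i(B)=\tfrac{1}{2\pi i}\int_{\partial U_i}(z-B)^{-1}\,dz$ depend continuously on $B$, are nonzero at $A$, and so remain nonzero at $N_k$ for large $k$, forcing $\sigma(N_k)$ to meet both $U_1$ and $U_2$ and contradicting $\sigma(N_k)=\{0\}$.

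For the reverse inclusion $\mathcal{C}\subset\mathrm{cl}\,\mathcal{N}$ I begin with a key reduction: every algebraic operator lying in $\mathcal{C}$ is already nilpotent, since its spectrum is finite and connected (hence a singleton) and must contain $0$ because $0\in\sigma_e\subset\sigma$, so the minimal polynomial is a power of $z$. It therefore suffices to show that $\mathcal{A}\cap\mathcal{C}$ is norm-dense in $\mathcal{C}$. Theorem~\ref{Voi} already supplies algebraic $A_k\to A$; the task is to arrange the $A_k$ to lie in $\mathcal{C}$, or equivalently to perturb each $A_k$ by a small amount into a nilpotent. Decomposing $A_k=\bigoplus_j(\mu_j^{(k)}+N_j^{(k)})$ along its generalised eigenspaces with spectral projections $P_j^{(k)}$, I would form $\widetilde A_k=A_k-\sum_j\mu_j^{(k)}P_j^{(k)}$, which is nilpotent, with perturbation norm at most $\sum_j |\mu_j^{(k)}|\,\|P_j^{(k)}\|$. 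Both hypotheses on $A$ do real work here: connectedness of $\sigma(A)$ allows each $\mu_j^{(k)}$ to be joined to $0$ along a path remaining in a small neighborhood of $\sigma(A)$, while $0\in\sigma_e(A)$ supplies the room in the Calkin algebra needed to realise this deformation by small-norm corrections. Equivalently, one can combine the cited Apostol--Voiculescu result $\mathcal{Q}\mathcal{N}\subset\mathrm{cl}\,\mathcal{N}$ with a proof that $\mathcal{C}\subset\mathrm{cl}\,\mathcal{Q}\mathcal{N}$, reducing to the quasinilpotent case.

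The principal obstacle is the quantitative control of the projections $P_j^{(k)}$: away from normality their norms can blow up, and neither connectedness of $\sigma(A)$ nor $0\in\sigma_e(A)$ alone is sufficient. What is needed is a non-commutative Weyl--von Neumann type refinement of Theorem~\ref{Voi} that permits the eigenvalue lists of the $A_k$ to be prescribed inside any chosen neighborhood of $\sigma(A)$ while keeping the $P_j^{(k)}$ tame. Producing such a refinement is the technical heart of the Apostol--Foias--Voiculescu argument; once in hand, the reduction in the previous paragraph closes the proof.
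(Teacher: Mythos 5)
There is a genuine gap: the hard inclusion $\mathcal{C}\subset\mathrm{cl}\,\mathcal{N}$ is not actually proved. Note first that the paper itself offers no proof of this statement --- it is quoted from Apostol--Foias--Voiculescu [6] --- so the only question is whether your argument stands on its own, and it does not. Your reduction (algebraic operators in $\mathcal{C}$ are nilpotent, hence it suffices to show $\mathcal{A}\cap\mathcal{C}$ is dense in $\mathcal{C}$) is correct as a logical step, but the proposed way of producing nilpotent approximants fails quantitatively: if $A_k\to A$ with $A_k$ algebraic, the eigenvalues $\mu_j^{(k)}$ of $A_k$ need not be small --- $\sigma(A)$ is connected and contains $0$ but may be large, e.g.\ a disc --- so $\sum_j|\mu_j^{(k)}|\,\|P_j^{(k)}\|$ is not small even when the spectral projections are perfectly tame, and "joining $\mu_j^{(k)}$ to $0$ along a path inside a neighborhood of $\sigma(A)$" is not a norm-small perturbation of the operator; moving an eigenvalue by a distance $\delta$ costs at least $\delta$ in norm on the corresponding spectral subspace. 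Likewise, invoking $\mathcal{Q}\mathcal{N}\subset\mathrm{cl}\,\mathcal{N}$ only shifts the burden to proving $\mathcal{C}\subset\mathrm{cl}\,\mathcal{Q}\mathcal{N}$, which is of essentially the same depth. You acknowledge this yourself by deferring the "technical heart" (a Weyl--von Neumann/Voiculescu-type refinement with controlled eigenvalue placement); but that deferred step \emph{is} the theorem, so what you have is a correct framing plus the easy half, not a proof.

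On the easy half, your argument for $\mathrm{cl}\,\mathcal{N}\subset\mathcal{C}$ is essentially sound: biquasitriangularity of the limit follows from $\mathrm{cl}\,\mathcal{A}=\mathcal{B}i\mathcal{Q}\mathcal{T}$, $0\in\sigma_e(A)$ follows from upper semicontinuity (equivalently, openness of the Fredholm operators, since the image of a nilpotent in the Calkin algebra cannot be invertible), and the Riesz-projection continuity argument rules out disconnection of $\sigma(A)$. One omission: the class $\mathcal{C}$ as defined in the paper also requires the \emph{essential} spectrum to be connected, and you never address that; the fix is the same Riesz-projection argument carried out in the Calkin algebra, but it should be stated.
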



\subsection {Polynomially normal   operators}

In  [21]  Gilfeather   asks  what operators $A$ satisfy equations of the form $f(A)=N$ where $N$ is a normal operator.  In particular, if $f$ is a polynomial it leads to a classification of polynomially normal operators.  Kittaneh [30] discusses this further and it contains the following formulation of Gilfeather's theorem. 

\begin{theorem} Let $A$ be a bounded operator  in a separable infinite dimensional Hilbert space and assume there exists a  nontrivial polynomial $p$ such that $p(A)$ is normal.  Then there exist reducing subspaces $\{ H_n\}_{n=0}^\infty$ for $A$ such that  
$H= \bigoplus_{n=0}^\infty H_n,$  $A_0= A_{| H_0}$ is algebraic, and $A_n=A_{| H_n}$ is similar to a normal operator for $n>0$.
\end{theorem}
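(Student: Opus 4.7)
The plan is to use normality of $N:=p(A)$ to split $H$ via spectral projections of $N$: one piece absorbs the finitely many critical values of $p$ (and carries the algebraic part of $A$), while the rest is cut into countably many Borel pieces on each of which $p^{-1}$ has $d:=\deg p$ single-valued branches; on each such piece I use Lagrange interpolation over commuting normal operators to exhibit $A$ as similar to a normal operator.

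First note that $A$ commutes with $N=p(A)$, so by the double-commutant theorem $A$ commutes with every element of the abelian von Neumann algebra $W^{\ast}(N)$; in particular every spectral projection $E(\Delta)$ of $N$ commutes with $A$, so spectral subspaces of $N$ automatically reduce $A$. Let $B_p:=p(\{z:p'(z)=0\})$, a finite set, and let $H_0$ be the spectral subspace of $N$ for $\sigma(N)\cap B_p$. Decomposing $H_0$ further into spectral subspaces $F_jH_0$ of $N$ for the individual eigenvalues $w_j\in\sigma(N)\cap B_p$, one has $p(A)=w_jI$ on $F_jH_0$, so $A|_{F_jH_0}$ is annihilated by $p(z)-w_j$; hence $A_0:=A|_{H_0}$ is algebraic with annihilator $\prod_j(p(z)-w_j)$. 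Put $H':=H_0^{\perp}$, so $\sigma(N|_{H'})\subset\mathbb{C}\setminus B_p$. Covering $\mathbb{C}\setminus B_p$ by simply connected open disks carrying $d$ analytic branches of $p^{-1}$, taking a countable subcover, and disjointifying produces a countable Borel partition $\{\Delta_n\}_{n\geq 1}$ of $\sigma(N|_{H'})$ together with Borel functions $\phi_n^{(1)},\dots,\phi_n^{(d)}$ satisfying $p\circ\phi_n^{(i)}=\mathrm{id}$ and $\phi_n^{(i)}\neq\phi_n^{(j)}$ on $\Delta_n$ for $i\neq j$.

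Define $H_n:=E(\Delta_n)H'$; these reduce $A$ and exhaust $H'$ orthogonally. Fix $n\geq 1$ and form the normal operators $M_n^{(i)}:=\phi_n^{(i)}(N|_{H_n})\in W^{\ast}(N|_{H_n})$ via Borel functional calculus. By construction $p(M_n^{(i)})=N|_{H_n}$, and $M_n^{(i)}-M_n^{(j)}$ is invertible for $i\neq j$, so
\begin{equation*}
p(z)-N|_{H_n}=\prod_{i=1}^{d}(z-M_n^{(i)})
\end{equation*}
holds as an identity of polynomials in $z$ with pairwise commuting coefficients. Substituting $z=A|_{H_n}$, which commutes with every $M_n^{(i)}\in W^{\ast}(N)$ by the opening remark, yields $\prod_{i}(A|_{H_n}-M_n^{(i)})=0$. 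Commutative Lagrange interpolation then produces idempotents
\begin{equation*}
E^{(i)}:=\Bigl(\prod_{j\neq i}(M_n^{(i)}-M_n^{(j)})\Bigr)^{-1}\prod_{j\neq i}(A|_{H_n}-M_n^{(j)})
\end{equation*}
satisfying $E^{(i)}E^{(k)}=\delta_{ik}E^{(i)}$, $\sum_{i}E^{(i)}=I_{H_n}$, and $A|_{H_n}E^{(i)}=M_n^{(i)}E^{(i)}$.

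To finish, I verify that $\mathrm{Ran}(E^{(i)})$ reduces the normal operator $M_n^{(i)}$. The algebra generated by $A|_{H_n}$ and $W^{\ast}(N|_{H_n})$ is abelian (since $W^{\ast}(N|_{H_n})$ is abelian and $A$ commutes with it), so $E^{(i)}$ commutes with both $M_n^{(i)}$ and $M_n^{(i)\ast}\in W^{\ast}(N|_{H_n})$; taking adjoints of the second relation, $E^{(i)\ast}$ also commutes with $M_n^{(i)}$, which gives invariance of $\mathrm{Ran}(E^{(i)})^{\perp}=\ker E^{(i)\ast}$ under $M_n^{(i)}$. Hence $M_n^{(i)}|_{\mathrm{Ran}(E^{(i)})}$ is normal, and the bounded invertible map $S\colon\bigoplus_{i=1}^{d}\mathrm{Ran}(E^{(i)})\to H_n$, $(x_i)\mapsto\sum_{i}x_i$, conjugates the normal operator $\bigoplus_{i}M_n^{(i)}|_{\mathrm{Ran}(E^{(i)})}$ to $A|_{H_n}$, proving that $A_n:=A|_{H_n}$ is similar to a normal operator. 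The main obstacle is the Borel-branch construction: monodromy of $p^{-1}$ around the critical values in $B_p$ prevents a global choice of $d$ branches on $\sigma(N|_{H'})$, so one must cut it into simply connected Borel pieces and then verify that Borel functional calculus produces operators $M_n^{(i)}$ that fit into the Lagrange identity.
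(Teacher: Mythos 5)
The paper itself offers no proof of this statement: it is quoted as Gilfeather's theorem in Kittaneh's formulation, with references [21] and [30]. Your argument reconstructs essentially the classical route (spectral subspaces of $N=p(A)$ reduce $A$; the critical values of $p$ carry the algebraic summand; away from them one uses branches of $p^{-1}$ and a commutative Lagrange/partition-of-unity identity), and most of it is sound: the reduction of $A$ by spectral projections of $N$, the algebraicity of $A_0$, the idempotent identities, the reducing-subspace verification for $M_n^{(i)}$, and the similarity via $S(x_i)=\sum_i x_i$ all check out. Two small blemishes: the commutation of $A$ with $W^*(N)$ is Fuglede's theorem, not a consequence of the double-commutant theorem alone (you must first get $AN^*=N^*A$ from $AN=NA$); and $\sigma(N|_{H'})$ need not avoid $B_p$ (it can contain critical values as accumulation points) -- harmless, since what you actually use is that the spectral measure of $N|_{H'}$ is concentrated on $\sigma(N)\setminus B_p$. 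Also, the factorization $p(z)-w=\prod_i(z-\phi^{(i)}(w))$ needs the leading coefficient of $p$, so normalize $p$ to be monic.

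The genuine gap is the unproved assertion that $M_n^{(i)}-M_n^{(j)}$ is invertible for $i\neq j$. Invertibility requires the symbol $\phi_n^{(i)}-\phi_n^{(j)}$ to be bounded away from zero almost everywhere with respect to the spectral measure on $\Delta_n$, and your construction ("open disks carrying $d$ branches, countable subcover, disjointify") does not guarantee this: a disk may avoid $B_p$ while its closure touches a critical value, and there two branches merge. Concretely, for $p(z)=z^2$ and $N\geq 0$ with $0$ in the essential spectrum but not an eigenvalue, the disk $\{|w-\tfrac12|<\tfrac12\}$ avoids $B_p=\{0\}$, yet on the corresponding piece $M^{(1)}-M^{(2)}=2\sqrt{N|_{H_n}}$ fails to be invertible, the Lagrange idempotents are unbounded, and in fact $A|_{H_n}$ need not be similar to a normal operator for such a piece -- this is exactly why the classical proof cuts the plane into countably many pieces staying at positive distance from the critical values. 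The fix is cheap and stays within your scheme: cover $\mathbb{C}\setminus B_p$ by disks whose \emph{closures} lie in $\mathbb{C}\setminus B_p$ (say radius at most half the distance of the center to $B_p$); then the branches extend continuously to a compact neighborhood of each piece, $|\phi_n^{(i)}-\phi_n^{(j)}|$ has a positive minimum there, and the invertibility you need follows. With that modification (and Fuglede properly invoked) your proof is correct.
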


For further results we refer to [30]. 

\begin{remark}
In finite dimensional spaces similarity  to diagonal matrices is  an important subset of matrices for which e.g. a functional calculus  can be naturally defined pointwise:  If $A=SDS^{-1}$ with $D= {\rm diag} (d_i)$, then defining $f(D)= {\rm diag} (f(d_i))$one can set $f(A)=Sf(D)S^{-1}$.  Of course,  using  characteristic polynomials  all matrices are "polynomially diagonalisable".  However, in practise the natural question is  about a simplifying polynomial  with smallest degree such that $p(A)$ is diagonalisable.  This is easy to describe by assuming a Jordan form  of $A$ to be known.  In fact, if $J$ denotes a $k \times k$  matrix
$$
J= \begin{pmatrix}  \lambda&1& \\
 &.&. &  &\\
 & &.&. &\\ 
 &  & & \lambda &1\\
  & & & & \lambda
\end{pmatrix}
$$
then  we want  $p(J)=   p(\lambda) I$ which  requires that $p^{(\nu)}(\lambda) = 0 $ for $\nu =1, ..., k-1$. Thus, if the minimal polynomial of $A$  is
$$
m_A(z) = \prod (z - \lambda_j) ^{n_j}
$$
then  with
$$
s_A(z) = \int_0 ^z (\zeta-\lambda_j)^{n_j -1} d\zeta  + c
$$
the matrix $s_A(A)$ is diagonalisable.
 A related Banach  algebra  and functional calculus was discussed in [39].
\end{remark}

\begin{remark}
Observe that if $A$ is polynomially normal,  $p(A)^* p(A)=p(A) p(A)^*$, then   also 
$p(A)-p(0)$ is normal and  we could restrict our attention to polynomials of the form $p(z)=zq(z)$.  This is a natural requirement in particular when considering polynomially unitary operators.
\end{remark}

Normal operators $A$ have the property that  $p(A)$ is normal for all polynomials $p$. If $A$ is self-adjoint, then $p(A)$ is self-adjoint if all coefficients of $p$ are real.  This is in contrast with the unitary case as  pointed out in Lemma  \ref{vainympyra} below.   We close this by a simple example of  a self-adjoint operator which pertubed by a  diagonal one is  no longer self-adjoint but still polynomially self-adjoint.

\begin{example}
Let $S$ denote the unitary shift operator in $\ell_2(\mathbb Z)$  so that $S+S^*$ is self-adjoint.  Let $D$ denote the diagonal operator mapping $e_j  \mapsto (-1)^j e_j$  and set
$$
A= S+S^* + i D 
$$
which is normal.   Take $p(\lambda)= \lambda^2 +1$ for which $p(i D) =0$.     Then  $p(A) = S^2 + (S^*)^2$ is selfadjoint. \end{example}

\subsection{Polynomially unitary operators}

Let $A$ be an algebraic operator  and $p$  such that $p(A)=0$.  Then  $(p+1)(A)=I$ is unitary. However, it is more useful to consider polynomials  of the form $p(z)= zq(z)$, as then it is more naturally related to  e.g. solving linear equations. In fact,
consider solving $Ax=b$   and assume that there is a known polynomial $q$ such that
$Aq(A)$ is unitary.   Then  $q(A)^{-1} A^{-1} = q(A)^* A^*$ and we have  an explicit expression for the inverse:
$$
A^{-1} = q(A) q(A)^* A^*.
$$
\begin{definition}  We say that $A$ is polynomially unitary if  there exists $q$ such that
$A q(A)$ is unitary  and denote  $A \in \mathcal P_0 \mathcal U$.
\end{definition}
 
We write $\mathcal P_0$ to make the restriction that   the polynomial must vanish at origin.  Then we can formulate  the following
\begin{proposition}
An algebraic operator $A$  is  polynomially unitary if and only if it is invertible.
\end{proposition}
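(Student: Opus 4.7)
The plan is to prove both implications directly; the forward one holds in general (no algebraicity needed), while the backward one exploits the algebraic hypothesis in an essential way.

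For the forward direction, suppose there is a polynomial $q$ with $U:=Aq(A)$ unitary, hence invertible. Since $A$ and $q(A)$ are both polynomials in $A$ they commute, so $Aq(A)=q(A)A$. Setting $W:=q(A)U^{-1}$ one has $AW=Aq(A)U^{-1}=UU^{-1}=I$, and similarly $WA=U^{-1}q(A)A=U^{-1}U=I$. So $A$ is invertible with $A^{-1}=W$. Note this argument nowhere uses that $A$ is algebraic; it is simply the fact that in a commutative product, if the product is invertible each factor is invertible up to the other.

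For the backward direction, suppose $A$ is algebraic and invertible. Let $m$ be the minimal polynomial of $A$. Because $A$ is algebraic, $\sigma(A)$ equals the set of roots of $m$ (by the spectral mapping theorem $\sigma(A)\subset\{\lambda:m(\lambda)=0\}$, and strict inclusion would contradict minimality since one could then divide out the offending factor). Since $A$ is invertible, $0\notin\sigma(A)$, so $m(0)\neq 0$. Write $m(z)=m(0)+z\tilde m(z)$ for a polynomial $\tilde m$; substituting $A$ gives $m(0)I+A\tilde m(A)=0$, whence
\begin{equation}
A\cdot\bigl(-\tfrac{1}{m(0)}\tilde m(A)\bigr)=I.
\end{equation}
Defining $q(z):=-\tilde m(z)/m(0)$, we obtain $Aq(A)=I$, and the identity is unitary. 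This produces a polynomial $q$ certifying $A\in\mathcal P_0\mathcal U$.

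The proof is essentially routine; the only step that deserves attention is justifying $m(0)\neq 0$ for an invertible algebraic operator, which I would derive from the characterization of $\sigma(A)$ as the root set of $m$. There is no real obstacle, since the existence of the polynomial $q$ representing $A^{-1}$ has already been flagged in the paper's earlier example on invertible algebraic operators.
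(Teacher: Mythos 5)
Your proof is correct and follows essentially the same route as the paper: invertibility of $A$ from the invertibility of the unitary $Aq(A)$, and in the converse direction the standard fact that an invertible algebraic operator has its inverse given by a polynomial in $A$ (which you derive explicitly from the minimal polynomial via $m(0)\neq 0$, where the paper simply cites this fact). The extra detail on why $m(0)\neq 0$ is sound, so there is nothing to correct.
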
   
\begin{proof}
Indeed, if $A q(A)$ is unitary,  $A$ must be invertible.  On the other hand, if $A$ is invertible and   algebraic, then there exists a polynomial $q$ such that $q(A) = A^{-1}$.
But then $Aq(A) = I$ is unitary.

\end{proof}
In [29] there is a discussion on how close $Aq(A)$ can be to unitary.   

\begin{proposition} {\rm (Theorem 2.5 in [29] ) }   Let $A$ be an $n \times n$ complex matrix.  Then for every eigenvalue of $A$ and for any polynomial $q$   
\begin{equation}\label {huh}
| \ |\lambda q(\lambda)|- 1 \ | \le \min_{U \in \mathcal U} \| Aq(A) - U \|| =
\max_{\|x\| =1} | \  \| Aq(A)x \| - 1 \ |.
\end{equation}
\end{proposition}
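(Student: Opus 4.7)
The plan is to treat the stated chain as one inequality plus one identity, and to handle them separately using an eigenvector test on the left and the singular value decomposition of $B:=Aq(A)$ on the right.

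First I would verify the leftmost inequality by spectral mapping. If $\lambda\in\sigma(A)$, then $\mu:=\lambda q(\lambda)\in\sigma(B)$, so there is a unit vector $x$ with $Bx=\mu x$. For any unitary $U$ the triangle inequality and $\|Ux\|=1$ give
\begin{equation*}
\|B-U\|\ \ge\ \|Bx-Ux\|\ \ge\ \bigl|\|Bx\|-\|Ux\|\bigr|\ =\ \bigl||\mu|-1\bigr|,
\end{equation*}
and taking the infimum over unitaries yields $||\lambda q(\lambda)|-1|\le \min_{U\in\mathcal U}\|Aq(A)-U\|$. This step is routine.

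Next I would establish the identity by reducing everything to singular values. Write a singular value decomposition $B=V\Sigma W^{*}$ with $\Sigma=\mathrm{diag}(\sigma_{1},\dots,\sigma_{n})$. Unitary invariance of the operator norm and the change of variables $y=W^{*}x$ give
\begin{equation*}
\min_{U\in\mathcal U}\|B-U\|\ =\ \min_{U\in\mathcal U}\|\Sigma-U\|,\qquad \max_{\|x\|=1}\bigl|\|Bx\|-1\bigr|\ =\ \max_{\|y\|=1}\bigl|\|\Sigma y\|-1\bigr|.
\end{equation*}
For the right hand side, $\|\Sigma y\|^{2}=\sum\sigma_{i}^{2}|y_{i}|^{2}$ is a convex combination of $\sigma_{i}^{2}$, so $\|\Sigma y\|$ takes values exactly in $[\sigma_{\min},\sigma_{\max}]$ and the supremum of $|\|\Sigma y\|-1|$ is $\max_{i}|\sigma_{i}-1|$, attained on a standard basis vector. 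For the left hand side, taking $U_{0}=VW^{*}$ gives $\|B-U_{0}\|=\|\Sigma-I\|=\max_{i}|\sigma_{i}-1|$, providing the upper bound; the matching lower bound follows from the displayed inequality above applied with the unit eigenvector of $\Sigma-U$ at the coordinate realising $\max_i|\sigma_i-1|$, or more cleanly by observing that for any unitary $U$ and any unit vector $y$,
\begin{equation*}
\|\Sigma-U\|\ \ge\ \|(\Sigma-U)y\|\ \ge\ \bigl|\|\Sigma y\|-\|Uy\|\bigr|\ =\ \bigl|\|\Sigma y\|-1\bigr|,
\end{equation*}
and maximising over $y$. This pinches both quantities to the common value $\max_{i}|\sigma_{i}(B)-1|$.

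The only subtle point is the direction $\min_{U}\|B-U\|\le \max_{\|x\|=1}|\|Bx\|-1|$, which needs the explicit near-unitary $U_{0}=VW^{*}$ from the polar decomposition; this is the step I expect a reader might find non-obvious, so I would state it as a small lemma (the Fan--Hoffman / polar factor characterisation of the nearest unitary in the operator norm). Everything else is a direct consequence of the SVD and the elementary inequality $\|Bx\|-\|Ux\|\le\|(B-U)x\|$.
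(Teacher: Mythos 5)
Your argument is correct and complete: the left inequality follows from the spectral mapping theorem plus the eigenvector test, and the identity $\min_{U\in\mathcal U}\|B-U\|=\max_{\|x\|=1}\bigl|\,\|Bx\|-1\,\bigr|$ is correctly pinched to the common value $\max_i|\sigma_i(B)-1|$ via unitary invariance, the SVD, and the polar factor $U_0=VW^*$ (the Fan--Hoffman nearest-unitary fact you flag as a lemma). Note that the paper itself gives no proof of this proposition --- it simply quotes Theorem 2.5 of [29] --- so your write-up actually supplies a self-contained argument where the survey only cites; the route you take (distance to the unitary group equals $\max(\|B\|-1,\,1-\sigma_{\min}(B))$, read off from the singular values) is the standard one and is essentially what the cited reference relies on. The only cosmetic point is the first phrasing of the lower bound ("unit eigenvector of $\Sigma-U$ at the coordinate realising $\max_i|\sigma_i-1|$"), which is muddled as stated; the cleaner displayed inequality you give immediately afterwards is the right way to say it, so keep that version and drop the first.
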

For numerical solution of  $Ax=b$  it  is then of interest to consider the minimum of the right hand side of (\ref{huh}) over all $q$ of a given degree and how fast that would decay with increasing the degree.  We refer to [29] for further discussion  on this direction.

Suppose now that $A$ is invertible (in an arbitary Banach space)  then there exists a sequence of polynomials $q_j$ such that $Q_j:= A q_j(A) \rightarrow I$ if and only if  the spectrum $\sigma(A)$ does not separate $0$ from $\infty$, see [33].   Note, however, that there are unitary operators like the  unitary shift for which  for all nonzero polynomials $q$ we have
$\|  S q(S) - I \| >1$  as   by  maximum principle $\max_{|z|!=1}  |zq(z)-1|>1$.  

We shall now consider  in more detail some examples related to  special cases  where $\sigma(A)= \mathbb T$, the unit circle.  This means that we may restrict our considerations to the polynomials $z\mapsto z^n$, which   follows from the following simple facts.

\begin{lemma} \label{vainympyra}Let $q$ be a polynomial of degree $n$ such that
 $|q(z)| =1$ on $\mathbb T$. Then $q(z)=   \alpha z^{n}$  where $|\alpha|=1$.
\end{lemma}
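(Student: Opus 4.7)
The plan is to use the standard reciprocal polynomial trick. Write $q(z) = a_0 + a_1 z + \cdots + a_n z^n$ with $a_n \neq 0$ and introduce the polynomial
\[
r(z) = z^n \, \overline{q(1/\bar z)} = \overline{a_0} z^n + \overline{a_1} z^{n-1} + \cdots + \overline{a_n},
\]
which is a polynomial of degree at most $n$ with constant term $r(0) = \overline{a_n} \neq 0$. This construction is tailored so that on the unit circle $\mathbb{T}$, where $1/\bar z = z$, one has $\overline{q(1/\bar z)} = \overline{q(z)}$, and therefore
\[
r(z) \, q(z) = z^n \, \overline{q(z)} \, q(z) = z^n \, |q(z)|^2 = z^n, \qquad z \in \mathbb{T},
\]
using the hypothesis $|q(z)|=1$ on $\mathbb{T}$.

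Next I would invoke the identity theorem for polynomials: $r(z)q(z)$ and $z^n$ are polynomials agreeing on the infinite set $\mathbb{T}$, hence they coincide identically in $\mathbb{C}$. Comparing degrees in $r(z)q(z) = z^n$ and using $\deg q = n$ forces $\deg r = 0$, so $r$ is the constant $\overline{a_n}$. Substituting back gives $q(z) = z^n / \overline{a_n}$, i.e. $q(z) = \alpha z^n$ with $\alpha = 1/\overline{a_n}$. Finally, evaluating at $z=1$ yields $|\alpha| = |q(1)| = 1$.

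There is no real obstacle here; the only minor point to check is that $r$ genuinely has degree at most $n$ (so that the degree count is conclusive), which follows from the explicit formula above, and that $r$ is nonzero (its constant term $\overline{a_n}$ is nonzero precisely because $\deg q = n$). The argument is essentially two lines once the reciprocal polynomial is introduced, and it isolates exactly what makes $z \mapsto z^n$ special among polynomials: being a finite Blaschke product with all zeros at the origin.
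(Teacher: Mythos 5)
Your proof is correct, but it takes a genuinely different route from the paper's. The paper expands $|q(e^{i\theta})|^2=q(e^{i\theta})\overline{q}(e^{-i\theta})$ as a trigonometric polynomial and, since this must be identically $1$, kills the nonconstant Fourier terms recursively: the top-frequency coefficient $\alpha_n\overline{\alpha_0}$ forces $\alpha_0=0$, and then successively $\alpha_j=0$ for all $j<n$, so $q(z)=\alpha_n z^n$. You instead encode the same circle identity algebraically via the reciprocal polynomial $r(z)=z^n\,\overline{q(1/\bar z)}$: on $\mathbb{T}$ one has $r(z)q(z)=z^n$, hence identically by the identity theorem for polynomials, and a degree count together with $r(0)=\overline{a_n}\neq 0$ forces $r$ to be the nonzero constant $\overline{a_n}$, so $q(z)=\alpha z^n$, with $|\alpha|=|q(1)|=1$ from the hypothesis. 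The two arguments carry the same information (comparing Laurent/Fourier coefficients of $q(z)\overline{q}(1/z)$), but yours replaces the paper's coefficient recursion by a single degree comparison, which is a bit slicker and highlights that $z\mapsto\alpha z^n$ is the only polynomial finite Blaschke product; the paper's computation is more pedestrian but completely self-contained, needing only multiplication of the two sums and reading off coefficients. A cosmetic remark: from $r\equiv\overline{a_n}$ you could also read off $a_0=\cdots=a_{n-1}=0$ directly from the explicit formula for $r$, and the consistency $\alpha=a_n=1/\overline{a_n}$ gives $|\alpha|=1$ without evaluating at $z=1$; either way the conclusion is sound.
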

\begin{proof}
Write $q(z)= \alpha_n z^n + lower$ with $\alpha_n\not=0$.  Then
$$
q(e^{i\theta})\overline{q}(e^{-i\theta})= \sum_j |\alpha_j|^2 + \dots + 2 Re\{  [ \alpha_n \overline \alpha_1+ \alpha_{n-1} \overline \alpha_0] e^{i(n-1)\theta} \} + 2 Re \{ \alpha_n \overline \alpha_0 e^{i n \theta} \}.
$$
Since this has to be identically 1,  all nonconstant terms  have to vanish. Beginning from the last term we obtain $\alpha_0=0$ and then recursively $\alpha_j=0$ for all $j<n$.  Thus $q(z)= \alpha_n z^n$.
\end{proof}

 \begin{lemma}  If $q(z)= z^n + \alpha_{n-1} z^{n-1} + \cdots + \alpha_0$, then 
 $$
 |q(e^{i\theta})| \le 1
 $$
 implies $q(z) = z^n$.
 \end{lemma}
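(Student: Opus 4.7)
The plan is to reduce the statement to an application of Parseval's identity on the unit circle. Write $q(e^{i\theta}) = \sum_{j=0}^{n} \alpha_j e^{ij\theta}$ with $\alpha_n = 1$. Since the functions $\{e^{ij\theta}\}_{j \in \mathbb Z}$ are orthonormal in $L^2(\mathbb T)$, Parseval gives
\begin{equation*}
\frac{1}{2\pi}\int_0^{2\pi}|q(e^{i\theta})|^2\,d\theta \;=\; \sum_{j=0}^{n}|\alpha_j|^2.
\end{equation*}

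On the other hand, the hypothesis $|q(e^{i\theta})| \le 1$ implies that the left-hand side is at most $1$. Combining these with $|\alpha_n|^2 = 1$ yields
\begin{equation*}
1 + \sum_{j=0}^{n-1} |\alpha_j|^2 \;\le\; 1,
\end{equation*}
which forces $\alpha_0 = \alpha_1 = \cdots = \alpha_{n-1} = 0$. Hence $q(z) = z^n$.

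This route avoids invoking the preceding Lemma \ref{vainympyra}; alternatively one could observe that the strict inequality $|q(e^{i\theta})| < 1$ on a set of positive measure would already force the $L^2$-norm to drop below $1$, contradicting $\|q\|_{L^2(\mathbb T)}^2 \ge |\alpha_n|^2 = 1$, so $|q|\equiv 1$ on $\mathbb T$, and then Lemma \ref{vainympyra} applied to the monic $q$ gives $q(z)=z^n$. Either way, the only step requiring care is the identification of $\alpha_n = 1$ as a genuine contribution to the $L^2$-norm via orthogonality of the exponentials, and there is no real obstacle beyond this routine use of Parseval.
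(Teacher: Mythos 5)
Your Parseval argument is exactly the paper's proof: the paper likewise notes that $\frac{1}{2\pi}\int_{-\pi}^{\pi}|q(e^{i\theta})|^2\,d\theta = 1+\sum_{j=0}^{n-1}|\alpha_j|^2$ and concludes from $|q|\le 1$ on the circle that all lower coefficients vanish. Your proposal is correct and essentially identical to the paper's reasoning; the alternative route via Lemma \ref{vainympyra} is an unnecessary detour.
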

 \begin{proof} We have 
 $$
 \frac{1}{2\pi} \int_{-\pi}^\pi |q(e^{i\theta})|^2 d\theta = 1+ \sum_{j=0}^{n-1} |\alpha_j|^2.
 $$
 \end{proof}

The following theorem is  due to Sz.-Nagy [48].
\begin{theorem}
A  bounded operator is similar to unitary if and only if  it is invertible and the set $\{ A^n\}_{n\in\mathbb Z}$ is bounded.
\end{theorem}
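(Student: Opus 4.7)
The plan is to handle the two directions separately. The easy direction is necessity: if $A=SUS^{-1}$ with $U$ unitary, then $A$ is invertible with $A^{-1}=SU^{*}S^{-1}$, and $A^{n}=SU^{n}S^{-1}$ for every $n\in\mathbb Z$, so $\|A^{n}\|\le\|S\|\,\|S^{-1}\|=:M$ uniformly.

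For sufficiency, suppose $A$ is invertible and $\sup_{n\in\mathbb Z}\|A^{n}\|=M<\infty$. The first step is to extract the two-sided estimate $M^{-1}\|x\|\le\|A^{n}x\|\le M\|x\|$ from the bound, using $\|x\|=\|A^{-n}A^{n}x\|\le M\|A^{n}x\|$. Then I would manufacture a new inner product in which $A$ acts as an isometry. Fix a translation-invariant mean $\operatorname{LIM}$ on $\ell^{\infty}(\mathbb Z)$ (a Banach limit), and set
\begin{equation*}
[x,y]:=\operatorname{LIM}_{n\in\mathbb Z}\,\langle A^{n}x,A^{n}y\rangle.
\end{equation*}
The sesquilinearity and the two-sided estimate above yield $M^{-2}\|x\|^{2}\le[x,x]\le M^{2}\|x\|^{2}$, so $[\cdot,\cdot]$ is an inner product whose induced norm is equivalent to the original Hilbert norm. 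Translation invariance of $\operatorname{LIM}$ gives $[Ax,Ay]=\operatorname{LIM}_{n}\langle A^{n+1}x,A^{n+1}y\rangle=[x,y]$, so $A$ is an isometry in $[\cdot,\cdot]$, and since $A$ is invertible it is in fact a surjective isometry, i.e.\ a unitary for the new inner product.

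The final step is the standard conversion from an equivalent inner product to a similarity. By the Lax--Milgram / Riesz representation argument, there exists a bounded, positive, boundedly invertible operator $P$ with $[x,y]=\langle Px,y\rangle$, so unitarity of $A$ in the new inner product reads $A^{*}PA=P$. Setting $S:=P^{1/2}$ (well defined and invertible by the spectral theorem since $P$ is positive and bounded below) and $U:=SAS^{-1}$, one checks $U^{*}U=S^{-1}A^{*}PAS^{-1}=S^{-1}PS^{-1}=I$, and $UU^{*}=I$ follows from invertibility. Hence $A=S^{-1}US$ is similar to the unitary $U$, completing the proof.

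The main obstacle is the construction of the new inner product: one must invoke a translation-invariant mean on $\mathbb Z$ (or equivalently a Banach limit, whose existence uses the Hahn--Banach theorem applied to the amenability of $\mathbb Z$), and one must verify that the induced norm really is equivalent, which depends crucially on having the bound on $\|A^{n}\|$ for negative as well as positive $n$. The rest is bookkeeping.
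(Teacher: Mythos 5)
Your proof is correct and is essentially the classical Sz.-Nagy argument: the paper itself offers no proof, merely citing Sz.-Nagy [48], and your construction (invariant mean on $\ell^{\infty}(\mathbb Z)$, equivalent inner product $[x,y]=\operatorname{LIM}_n\langle A^nx,A^ny\rangle$, then $P^{1/2}$ to convert $A^*PA=P$ into an explicit similarity) is exactly that standard proof. The only bookkeeping worth making explicit is that the invariant mean is positive and extends to complex-valued bounded sequences, which gives the Hermitian symmetry and the two-sided bound you use.
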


If an operator $A$ is similar to a normal one, then   it satisfies a  Linear Resolvent Growth ({" LRG "}) condition with some constant $C$:
 
 \begin{equation} \label{LRG}
 \|(\lambda-A)^{-1} \|  \le \frac{C}{ {\rm dist} (\lambda,\sigma(A))} \ \ \ \  {\rm for }  \ \ \lambda \notin \sigma(A).
 \end{equation}

The backward shift $T$ is a contraction  satisfying LRG but is  not normal as $T^*T \not= TT^*$.  
 
 Benamara and Nikolski   have the following result. 
   
 \begin{theorem} {\rm [9]} Let $A=U+F$, where $U$ is unitary and $F$ of finite rank, be  a contraction $\|A\| \le 1$.  Then $A$ is similar to a normal operator if and only if $A$ satisfies (\ref{LRG}) and $\mathbb D \not\subset  \sigma(A).$

\end{theorem}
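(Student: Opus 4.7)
The necessity direction is essentially automatic. The LRG bound for operators similar to a normal one is recorded in the text preceding the theorem. For the spectral condition $\mathbb{D}\not\subset\sigma(A)$, observe that $F=A-U$ is compact, so by Weyl's theorem $\sigma_e(A)=\sigma_e(U)\subset\mathbb{T}$; hence every point of $\sigma(A)\cap\mathbb{D}$ is an isolated eigenvalue of finite multiplicity, so this set is at most countable and cannot fill the open disk.

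For sufficiency, the plan is to work inside the Sz.-Nagy--Foias functional model for contractions with finite defect indices. First observe the identity
$$ I - A^*A = -U^*F - F^*U - F^*F, $$
which shows that $I-A^*A$ (and similarly $I-AA^*$) has rank at most $3\,\mathrm{rank}(F)$, so $A$ is a weak contraction with finite defect indices. Split $H = H_u \oplus H_{cnu}$ into its canonical unitary plus completely non-unitary parts; the unitary summand is already normal, so the problem reduces to proving that $T=A|_{H_{cnu}}$ is similar to a unitary operator. Associated with $T$ is its characteristic function $\Theta_T$, an analytic contractive function on $\mathbb{D}$ with values in the operators between the two finite-dimensional defect spaces $\mathcal{D}_T$ and $\mathcal{D}_{T^*}$. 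The hypothesis $\mathbb{D}\not\subset\sigma(A)$ forces $\det\Theta_T\not\equiv 0$, so $\Theta_T$ is a nondegenerate matrix-valued inner function.

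The crucial step is to translate the LRG estimate into a uniform bound $\|\Theta_T(\lambda)^{-1}\|\le C$ on $\mathbb{D}\setminus\sigma(T)$, i.e.\ $\Theta_T^{-1}\in H^\infty$. Using the model-theoretic resolvent formula, which expresses $(\lambda-T)^{-1}$ in terms of $\Theta_T(\lambda)^{-1}$ and the projection onto the model subspace, LRG is equivalent to $H^\infty$-invertibility of $\Theta_T$. Combined with inner-ness and finite-dimensionality of the defect spaces, this forces $\Theta_T$ to be a finite Blaschke--Potapov product, which by classical Sz.-Nagy--Foias theory is precisely the criterion for $T$ to be similar to a unitary operator. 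Taking the direct sum with $A|_{H_u}$ yields a normal operator similar to $A$.

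The technically delicate step is the equivalence between LRG and the $H^\infty$-invertibility of $\Theta_T$. One direction estimates $\|\Theta_T(\lambda)^{-1}\|$ by $|\det\Theta_T(\lambda)|^{-1}$ times a power of $\|\Theta_T(\lambda)\|$ via the finite-dimensional Cramer rule, and the other direction uses the model decomposition of the resolvent. This is precisely where finite rank of $F$ enters essentially: without it one could not reduce the analysis of the matrix-valued inner function $\Theta_T$ to scalar estimates on $\det\Theta_T$, nor control the size of $\Theta_T(\lambda)^{-1}$ uniformly on $\mathbb{D}$.
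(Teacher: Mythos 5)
First, note that the paper does not prove this statement at all: it is quoted from Benamara--Nikolski [9], so your sketch has to stand on its own, and as it stands it has genuine gaps. You choose the right framework (Sz.-Nagy--Foias model, finite defect indices via $I-A^*A=-(U^*F+F^*U+F^*F)$, the role of $\det\Theta_T$), but the central claims are wrong. The characteristic function of the c.n.u.\ part need not be inner: $\Theta_T$ inner is equivalent to $T\in C_{\cdot 0}$ (i.e.\ $T^{*n}\to 0$ strongly), which is incompatible with $T$ being similar to a unitary except on the zero space -- so precisely in the situation the theorem describes, $\Theta_T$ is \emph{not} inner. Moreover the similarity criterion you invoke is misstated: the Sz.-Nagy--Foias test for similarity to a unitary is that $\Theta_T$ be invertible in $H^\infty$, not that it be a finite Blaschke--Potapov product; a finite Blaschke--Potapov product has a finite-dimensional model space whose operator has spectrum inside $\mathbb D$ and is certainly not similar to a unitary, and an inner function that is invertible in $H^\infty$ is necessarily a unitary constant, so the three properties you combine (inner, $H^\infty$-invertible, finite Blaschke--Potapov) are mutually inconsistent outside trivial cases.

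The deeper gap is the asserted equivalence between the LRG condition and $\Theta_T^{-1}\in H^\infty$, together with the reduction to ``similar to unitary''. The theorem allows $\sigma(A)\cap\mathbb D\neq\emptyset$: every such point is a zero of $\det\Theta_T$, where $\Theta_T(\lambda)$ is not invertible at all, and there may be infinitely many of them accumulating at $\mathbb T$, while $A$ is still similar to a \emph{normal} operator having these eigenvalues. So no uniform bound on $\Theta_T^{-1}$ can hold, and the actual proof must factor out a Blaschke--Potapov piece carrying the point spectrum in $\mathbb D$ and then show that the corresponding spectral subspaces, together with the ``unitary-like'' part, form an unconditional (Riesz) decomposition; this is where LRG combined with the finite-dimensionality of the defect spaces (determinant/Cramer estimates, Carleson-type and free-interpolation conditions) does real work. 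Had a soft equivalence of the kind you propose been available, the result would extend to weak contractions, contradicting the Kupin--Treil example cited in [31]. Finally, a small repair in the necessity direction: Weyl's theorem gives that each $\lambda\in\sigma(A)\cap\mathbb D$ is an eigenvalue of finite multiplicity at which $A-\lambda$ is Fredholm of index zero, but not that it is isolated; to rule out $\mathbb D\subset\sigma(A)$ you should instead use that similarity to a normal operator would produce uncountably many eigenvalues, hence uncountably many mutually orthogonal eigenvectors of the normal operator, which is impossible in a separable Hilbert space.
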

On the sharpness on this, see [31].   Further, Nikolski and Treil have the following.

\begin{theorem} {\rm [41]}
Let  $U$ be unitary such that its spectrum contains a nontrivial absolutely continuous part.  Then there exists a rank - 1   perturbation  $b a^*$ such that the operator $A=U+ b a^*$ satisfies (\ref{LRG}),  $\sigma(A) \subset \mathbb T$ but $A$ is not similar to a unitary operator.
\end{theorem}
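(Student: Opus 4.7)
The plan is to use the spectral theorem to realize the absolutely continuous part of $U$ as multiplication by $\zeta$ on $L^2(E, d\mu)$, where $E \subset \mathbb T$ is an arc and $\mu$ has a nontrivial absolutely continuous component, and to choose $a, b \in L^2(E, d\mu)$ explicitly. By the Sherman--Morrison inversion, for $\lambda \notin \sigma(U)$,
\[
(\lambda - A)^{-1} \;=\; R_\lambda \;+\; \frac{R_\lambda\, b\, a^*\, R_\lambda}{\varphi(\lambda)},
\qquad
\varphi(\lambda) \;=\; 1 - \int_E \frac{\overline{a(\zeta)}\, b(\zeta)}{\lambda - \zeta}\, d\mu(\zeta),
\]
where $R_\lambda = (\lambda - U)^{-1}$. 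Thus $\varphi$ is a Cauchy transform, analytic on $\mathbb C \setminus \mathbb T$, and the spectrum of $A$ off $\sigma(U)$ coincides with its zero set.

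First I would arrange $\sigma(A) \subset \mathbb T$. Taking $\overline{a}\,b = h \ge 0$ bounded with sufficiently small $L^1(d\mu)$-norm makes the Cauchy transform of $h\,d\mu$ have modulus strictly less than $1$ off $\mathbb T$, so $\varphi$ has no zeros there; combined with Weyl's theorem (rank-$1$ perturbations preserve the essential spectrum) this yields $\sigma(A) \subset \mathbb T$. Next I would check (\ref{LRG}): since $U$ is normal, $\|R_\lambda\| = 1/{\rm dist}(\lambda, \sigma(U))$, so it suffices to bound the rank-one correction $\|R_\lambda b\|\,\|a^* R_\lambda\|/|\varphi(\lambda)|$ by a constant times $1/{\rm dist}(\lambda,\sigma(A))$. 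This is achieved by picking $a, b$ with H\"older-continuous weights vanishing at the endpoints of $E$, so that standard Privalov/Cauchy-integral estimates control the numerator, while $|\varphi(\lambda)|$ stays bounded below by a positive multiple of ${\rm dist}(\lambda, \mathbb T)$, equivalently of ${\rm dist}(\lambda,\sigma(A))$ once $\sigma(A) \subset \mathbb T$ is in hand.

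The principal obstacle is the failure of similarity to a unitary operator. By Sz.-Nagy's theorem quoted above, this is equivalent to $\sup_{n\in\mathbb Z}\|A^n\| = \infty$. I would exhibit a vector $x$ whose orbit $\{A^n x\}_{n\in\mathbb Z}$ is unbounded, using the Sz.-Nagy--Foias model of the contraction $A/\|A\|$: with $h$ chosen so that $\varphi$ has singular boundary behavior on a subset of $E$ of positive Lebesgue measure, the characteristic function of the completely non-unitary part of $A/\|A\|$ becomes a nontrivial inner function. This places $A$ in a class $C_0$ with prescribed minimal function, and such operators cannot be similar to a normal operator because their $H^\infty$ functional calculus cannot be isometrically identified with a scalar one. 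The delicate trade-off, which is the real content of the Nikolski--Treil theorem, is that the same $h$ must simultaneously make $|\varphi|$ small enough on $\mathbb T$ to obstruct unitary similarity while keeping $1/|\varphi|$ bounded off $\mathbb T$ for (\ref{LRG}); this compromise is possible precisely because $\mu$ carries a nontrivial absolutely continuous part, providing enough boundary regularity to separate the two requirements.
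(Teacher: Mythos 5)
This theorem is quoted in the paper from Nikolski--Treil [41] without proof, so there is no internal argument to compare with; judged on its own, your sketch outlines a reasonable framework (Sherman--Morrison resolvent formula, zeros of the perturbation determinant $\varphi$, Sz.-Nagy's power-boundedness criterion), but the two steps that carry the actual content of the theorem do not work as stated. First, the quantitative plan for (\ref{LRG}) is internally inconsistent: with $a,b$ supported on an arc with bounded (even H\"older) densities with respect to absolutely continuous measure, one has $\|R_\lambda b\|\,\|R_\lambda^* a\| \asymp {\rm dist}(\lambda,\mathbb T)^{-1}$ near the interior of the arc, so if you only secure $|\varphi(\lambda)| \gtrsim {\rm dist}(\lambda,\mathbb T)$, the correction term is of order ${\rm dist}(\lambda,\mathbb T)^{-2}$, which violates (\ref{LRG}) rather than proves it; you would need $|\varphi|$ bounded below by a constant wherever the numerator is of order ${\rm dist}^{-1}$, and arranging that simultaneously with the degeneracy needed for non-similarity is exactly the hard trade-off of [41], not something that follows from ``standard Privalov/Cauchy-integral estimates.'' (Also, smallness of $\|h\|_{L^1}$ alone does not make the Cauchy transform of $h\,d\mu$ small near the support, so the claimed proof that $\varphi$ has no zeros off $\mathbb T$ is incomplete.)

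Second, the non-similarity argument is not a proof. The operator $A=U+ba^*$ is in general not a contraction, and passing to $A/\|A\|$ destroys the relation between $\sigma(A)\subset\mathbb T$ and the unit circle, so the Sz.-Nagy--Foias characteristic-function/$C_0$ machinery does not attach to $A$ in the way you describe; moreover, membership in $C_0$ with a nontrivial inner minimal function is not by itself an obstruction to similarity to a normal operator (finite Blaschke products give $C_0$ operators that are diagonalizable, hence similar to normal), and nothing in your construction shows the relevant characteristic function is inner. What is needed is a concrete witness of $\sup_{n\in\mathbb Z}\|A^n\|=\infty$ (or a failure of the two-weight estimates that similarity to a unitary would force), produced by the same choice of $a,b$ that preserves (\ref{LRG}); identifying such a choice and verifying both properties at once is the substance of the Nikolski--Treil construction, and it is missing here. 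As it stands the proposal is a research plan with a genuinely incorrect quantitative step, not a proof of the theorem.
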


We shall now go through a list of simple examples.   

Our first example deals with a diagonal  unitary operator perturbed with  a nilpotent rank-1 operator.  Depending on  whether $\varphi/2\pi$ is rational or not,  the operator is polynomially unitary. 

\begin{example}
Let
$A=  \sum_{j=1}^\infty \lambda_j e_j e_j^* + (\lambda_2-\lambda_1) e_1 e_2^*$. 
Assume then that  cl$\{\lambda_j\}= \mathbb T$  so that the spectrum of $A$ is the unit circle $\mathbb T$.
Hence, we may  consider $z\mapsto z^n$. 
We have
$$
A^n = D^n + (\lambda_2^n-\lambda_1^n) e_1 e_2^*
$$
where the off-diagonal term measures the distance from $A^n$ to be unitary.  Thus we may set e.g. $\lambda_1=1$  while $\lambda_2=e^{i\varphi}$.  
Now  we have $|\lambda_2^n-1|=0$  for some $n$ if and only if $\varphi/2\pi $ is rational.
In particular, if $\lambda_2=-1$, then $A^2$ is unitary and $A$ is similar to unitary  if we write $A= B \oplus D_3$ with 
$$ 
B= \begin{pmatrix} 1 & -2 \\ 
 &-1 \end{pmatrix}
$$ 
and $D_3  =  {\rm diag} (\lambda_3, \lambda_4, ...)$. Then  $TAT^{-1}$ is diagonal, unitary where
$$
T= \begin{pmatrix} 1&-1\\
&1\end{pmatrix} \oplus I. 
$$
Assume now that $\varphi/2\pi $ is not rational.  Then   $Aq(A)$ is not unitary for any polynomial $q$ as 
$$
\|Aq(A) (Aq(A))^* -I \| >0
$$
but still
$$
\inf_n \|A^n (A^n)^* -I\| =0.
$$ 
 \end{example}

The next example  presents an operator $A$ which is not normal  but $A^2$ is unitary.

\begin{example}
Let 
$$ 
B=\begin{pmatrix} 1&1\\
0&-1
\end{pmatrix}
$$
and with $\{e^{i \theta_j} \}$ dense on  $\mathbb T$ we set
$$
A= \bigoplus_{j=1}^\infty e^{i\theta_j}B.
$$
With $I$ denoting the 2-dimensional identity we have, since $B^2=I$,
$$
A^2= \bigoplus_{j=1}^\infty e^{2 i\theta_j}I
$$
and hence $A^2$ is unitary.  
\end{example}

\begin{example}
Let $\{\lambda_j\}$ de dense in $\mathbb D$  and if $B$ is as in the previous example, then set
$$
A= \bigoplus_{j=1}^\infty  \lambda_j B.
$$
Then again $A^2$ is normal, and $A$ has large spectrum: $\sigma(A)=  \overline {\mathbb D}$.

\end{example}

 
 \begin{example}
 
In this example \footnote{I must have seen this one somewhere but cannot  find the reference} we  meet an operator $A  \in l_2(\mathbb N)$ with the following properties

\bigskip

(i)  $A$ is similar to unitary

\bigskip

(ii)  $\|A^n\| = 2$   for all $n \not= 0$.

\bigskip

(iii)  $\sigma(A) = \mathbb T$.

\bigskip 

\noindent Denote by $C_n$ the circulant unitary matrix in $\mathbb C^n$ such that  $c_{i+1,i}=c_{1,n}=1
$  and let $D_n(r)$ be the diagonal matrix such that $d_{i,i}=r^{-1}$ ($i<n$) while $d_{nn}=r^{n-1}$.   Then 
$
A_n(r)= C_n D_n(r)
$
is given in its polar form, with eigenvalues at the roots of unities.
In particular 
$$
\|A_n(r)^k\|= r^{-k}  \text{ for }  k<n
$$
while 
$
A_n(r)^n=I.
$
Choose $r=r_n= 2^{\frac{-1}{n-1}}$ so that $\|A_n(r_n)^{n-1}\| =2$. 
Finally set
$
A= \bigoplus_{n=2}^\infty  A_n(r_n). 
$
Now this one has the properties asked for, as  we have likewise, for $k\le n$
$$
\|A_n(r)^{-k}\| =r^{k-n}
$$
and hence, $\|A^{-k}\| =2$,  $k>0$. Summarizing we have
$
\| A^n\| =2   \   \text{ for }  n \not=0.
$

\end{example}

  The next example shows an operator $T$ such that $p(T)$ is unitary.  

\begin{example}
Let $S$ denote the unitary shift in $\ell_2(\mathbb Z)$ and $p(\lambda)=(\lambda - \lambda_1)(\lambda-\lambda_2)$.  Note that we may set e.g. $\lambda_2=0$ to obtain $p$ of the form $zq(z)$. 
Define $T$ as  follows
$$
e_k \mapsto e_{k+1} + \lambda_1 e_k   \  \text{  for  } \  k   \  \text{  odd  }  
$$
$$
e_k \mapsto e_{k+1} + \lambda_2 e_k   \  \text{  for  } \  k   \  \text{  even}.
$$
Denoting by $D$ the diagonal operator with  $\lambda_1$ and $\lambda_2$ alternating on  the diagonal we have  $p(D)=0$ and $T=S+D$.  Now we have
$$
p(T)=S^2.
$$
In fact, suppose  $k$ is odd. Then
$$
T^2 e_k = T (e_{k+1} + \lambda_1 e_k  ) = e_{k+2}+ (\lambda_1+\lambda_2) e_{k+1} +\lambda_1^2 e_k
$$
$$
-(\lambda_1+\lambda_2) T e_k = -(\lambda_1+\lambda_2) e_{k+1}-(\lambda_1+\lambda_2) \lambda_1e_k
$$
and we obtain
$$
p(T)e_k = (T^2 -(\lambda_1+\lambda_2) T + \lambda_1\lambda_2) e_k=e_{k+2}.
$$
With $k$ even the computation is analogous.   Further, if $p(\lambda)-p(z)=(\lambda-z)p[\lambda,z]= (\lambda-z)(\lambda+ z - \lambda_1-\lambda_2)$ then
$$
(\lambda-T)^{-1} = p[\lambda, T] (p(\lambda)-p(T))^{-1} = p[\lambda, T] 
(\lambda^2-(\lambda_1+\lambda_2)\lambda + \lambda_1\lambda_2 - S^2)^{-1}.
$$
Notice that this holds as such if we   denote  by $S$ the   forward shift in $\ell_2(\mathbb N)$.
By spectral mapping theorem the spectrum of $T$  is the lemniscate  $\{ \lambda \ :  |p(\lambda)|=1 \}$   (or together with the inside if $S$  not invertible).

We may modify the operator $T$ as follows.
Let   $T=D+\rho S$  where $\rho >0$.   Then  the same calculation gives
$$
p(T)=  (\rho S)^2.
$$
In particular we may have  the spectrum of  $T$ to equal the lemniscate, with any level $\rho$. 

\end{example}

\begin{example}
 Now let $S$ be again the unitary shift and set $T=S+ \alpha e_0 e_0^*$.    By Weyl's  theorem $\sigma(T) \subset \sigma(S) \cup \sigma_p(T)$.   So, assume $|\lambda|\not=1$. 
 We have
 $$
 (\lambda-T)^{-1}= (1-\alpha f_0 e_0^*)^{-1} (\lambda-S)^{-1}
 $$ where
 $$
 f_0= (\lambda-S)^{-1} e_0.
 $$
Since
$$
(1-\alpha f_0 e_0^*)^{-1} = 1+ \frac{\alpha}{1- \alpha e_0^* f_0} f_0 e_0^*
$$ 
$\lambda$ is an eigenvalue iff $ \alpha e_0^*f_0 =1$  and $f_0$ is likewise an eigenvector, if
$$
(\lambda-T) f_0= e_0 - \alpha e_0 e_0^* f_0 =0.
$$
When $|\lambda|>1$ we have $$f_0 = (\lambda-S)^{-1}e_0 = \lambda^{-1} e_0 + \lambda^{-2} e_1 + \cdots
$$ and thus $ \alpha e_0^* f_0= 1$ iff $\alpha = \lambda$.   

When $|\lambda|<1$ we have 
$$
f_0=  - S^{-1}(1- \lambda S^{-1})^{-1}e_0 = - e_{-1} - \lambda e_{-2} - \cdots
$$
and thus $\alpha e_0^* f_0 = 0$.

Thus
$$
\sigma(T) = \sigma(S) \cup \{ \alpha \} 
$$
if $|\alpha | >1$,  while  otherwise $\sigma(T)=\sigma(S)$.

Notice that again the claim stays the same if we replace $S$ by the forward shift with  the unit disc as the spectrum.

\end{example}

\begin{example}
Now $T=S+ \alpha e_0 e_k^*$ with $k \ge 1$.  For $|\lambda|>1$  the condition now is $\alpha e_k^* f_0= \alpha \lambda^{-k-1}=1$  while for $|\lambda| <1$ we have $ \alpha e_k^* f_0 = 0$.  Thus
$$
\sigma(T) = \sigma(S) \cup \{ \lambda_1, \cdots, \lambda_{k+1}\}
$$
 for $|\alpha|>1$,   where $ \lambda_j$ denote the $k+1$ roots of $\lambda^{k+1} = \alpha$,  while  for $|\alpha|<1$  we have
 $$
 \sigma(T)=\sigma(S).
 $$
\end{example}

\begin{example}
Consider now $T=S+ \alpha e_0 e_{-k}^*$ with $k \ge 1$.   The condition  for $|\lambda| >1$ now reads
$$
\alpha e_{-k}^* f_0 = \alpha e_{-k}^* (\lambda^{-1} e_0 + \lambda^{-2} e_2 + \cdots )=0$$ and $(\lambda - T)$ is invertible for $|\lambda| >1$.   On the other hand , for $|\lambda| <1$ the condition takes the form 
$$
\alpha e_{-k}^* f_0 = \alpha e_{-k}^* ( -e_{-1} - \cdots - \lambda^{k-1}e_{-k}  - \cdots)= - \alpha \lambda^{k-1} =1
$$
Hence, with $k=1$ the operator $T$ is invertible except  when $\alpha= -1$ -  and then $T$ actually splits into a  sum of forward and backward  shifts  with  the open disc consisting of eigenvalues. 

For $k >1$ we obtain eigenvalues $\{ \lambda_1, \cdots, \lambda_{k-1}\}$ where $\lambda_j$ are the roots of $\lambda^{k-1} = 1/ \alpha$.  
Thus we summarize  with $k=1$:
$$
\sigma(T) = \sigma(S)  \text { for }  \alpha \not=-1   
$$
$$
\sigma(T)= \sigma(S)  \cup \mathbb D, \text { for }  \alpha = -1
$$
while for $k \ge 2$
$$
\sigma(T)= \sigma(S)   , \text { for }  |\alpha| \ge 1
$$
$$
\sigma(T)= \sigma(S)  \cup \{\lambda_1, \cdots, \lambda_{k-1}\} , \text { for }  |\alpha|<1.
$$
 \end{example}

\bigskip

\section{Block triangular  operators}

\subsection{Notation and spectrum}

Let $ X$ and $Y$ be Banach spaces.  We consider  triangular operators  of the form
\begin{equation}
M_C= \begin{pmatrix} A & C \\
&B\end{pmatrix}
\end{equation}
where $A \in \mathcal B(X)$, $B\in \mathcal B(Y)$ and $C\in \mathcal B(Y,X)$.  Thus $M_C$ is always a bounded operator in $\mathcal B(X\oplus Y)$.  When $X$ and $Y$ are Hilbert spaces  and  $(x,y) \in X\oplus Y$, we norm $\|(x,y)\|^2 = \|x\|^2 + \| y\|^2$ while in general e.g.  $\|(x,y)\| = \max \ \{\|x\|, \|y\|\}$.

To motivate  the special interest in these block operators, notice that if we write
$$
M_C= \begin{pmatrix} A &\\  & B\end{pmatrix} + \begin{pmatrix} & C \\  & \end{pmatrix} = M_0 + N
$$ 
then $M_C$ can be thought as a perturbation of the block diagonal operator by a nilpotent one, as $N^2=0$. 

Similarly to the discussion in the previous section we first  list properties  on $M_0$ which are preserved under addition of a corner element $C$.  And then  we ask - if  the original property  is not preserved, whether there would  exist a $p$ such that $p(M_C)$ would again share  the original property. 

We need  to  know the relation between the spectrum of $M_C$ and those of $A$ and $B$.   Clearly $\sigma(M_0) = \sigma(A) \cup \sigma(B)$. 

\begin{lemma}\label{spektritsamat}   We have always 
\begin{equation}\label{osanavain}
\sigma(M_C) \subset \sigma (M_0)
\end{equation}
and 
\begin{equation}
\widehat {\sigma(M_C)} = \widehat {\sigma(M_0)}.
\end{equation}
\end{lemma}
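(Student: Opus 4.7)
The plan is to handle the two assertions separately, with the first being a direct matrix computation and the second reducing to analytic continuation of the resolvent's block structure.

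For (\ref{osanavain}), I would show that $\lambda \notin \sigma(A)\cup\sigma(B)$ implies $\lambda\in\rho(M_C)$ by exhibiting the explicit inverse
\begin{equation*}
(\lambda - M_C)^{-1} = \begin{pmatrix} (\lambda-A)^{-1} & (\lambda-A)^{-1} C (\lambda-B)^{-1} \\ 0 & (\lambda-B)^{-1} \end{pmatrix},
\end{equation*}
which is verified by one matrix multiplication in each order. Hence $\sigma(M_C) \subset \sigma(A)\cup\sigma(B) = \sigma(M_0)$ and, equivalently, $\rho(M_0) \subset \rho(M_C)$.

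For the polynomial-hull equality I would use the standard description $\widehat{K} = K \cup (\text{bounded components of } \mathbb{C}\setminus K)$, so the statement amounts to showing that $M_C$ and $M_0$ share the same unbounded component of the resolvent set. One inclusion is immediate from step one: the unbounded component of $\rho(M_0)$ contains $\infty$ and is contained in $\rho(M_C)$, hence in its unbounded component.

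For the reverse inclusion, let $U$ denote the unbounded component of $\rho(M_C)$. On a neighbourhood of $\infty$ the formula above is valid and, in particular, the $(2,1)$-block of $(\lambda-M_C)^{-1}$ vanishes there. Since $(\lambda-M_C)^{-1}$ is an $\mathcal{B}(X\oplus Y)$-valued holomorphic function on $U$, its block components are holomorphic too; by the identity principle the $(2,1)$-block $N_{21}(\lambda)$ vanishes on all of $U$. Writing out $(\lambda - M_C)(\lambda-M_C)^{-1} = I = (\lambda-M_C)^{-1}(\lambda-M_C)$ blockwise and using $N_{21}\equiv 0$ on $U$, the diagonal equations reduce to $(\lambda-A)N_{11} = N_{11}(\lambda-A) = I$ and $(\lambda-B)N_{22} = N_{22}(\lambda-B) = I$. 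Thus $U \subset \rho(A)\cap\rho(B) = \rho(M_0)$, so $U$ lies in the unbounded component of $\rho(M_0)$.

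The main subtlety is the analytic-continuation step: one must justify that vanishing of the $(2,1)$-block near $\infty$ forces it to vanish throughout $U$, which requires $U$ to be connected (which it is by definition) and the block-projection to be a bounded linear map on $\mathcal{B}(X\oplus Y)$, so that it preserves holomorphy and the scalar identity principle transfers to the operator-valued setting. Everything else is a routine matrix manipulation.
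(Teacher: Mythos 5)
Your proof is correct. The first inclusion is argued exactly as in the paper, via the explicit upper-triangular formula for $(\lambda-M_C)^{-1}$ when $\lambda\notin\sigma(M_0)$. For the hull equality, however, you take a genuinely different route. The paper argues through boundary behaviour of resolvent norms: it picks a boundary point $\lambda_0$ of $\widehat{\sigma(M_0)}$, notes that $\|(\lambda-M_0)^{-1}\|\to\infty$ as $\lambda\to\lambda_0$ from outside the hull (using $\partial\widehat{K}\subset K$), and reads off from the block formula that $\|(\lambda-M_C)^{-1}\|$ must blow up too, so $\lambda_0\in\partial\widehat{\sigma(M_C)}$; equality of the hulls then follows. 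You instead work with the description $\widehat{K}=\mathbb{C}\setminus(\text{unbounded component of }\mathbb{C}\setminus K)$ and propagate the vanishing of the $(2,1)$-block of $(\lambda-M_C)^{-1}$ from a neighbourhood of $\infty$ to the whole unbounded component $U$ of $\rho(M_C)$ by the vector-valued identity principle; the blockwise two-sided inverse equations then force $\lambda\in\rho(A)\cap\rho(B)$ on $U$, so the unbounded components of $\rho(M_C)$ and $\rho(M_0)$ coincide. Your version is slightly longer but self-contained: it avoids the facts that the boundary of the hull lies in the spectrum and that resolvent norms blow up at spectral points, at the cost of invoking analytic continuation for operator-valued functions, and it yields the extra structural information that the resolvent of $M_C$ remains block upper triangular on the unbounded component (which Example 4.2 of the paper shows can fail on bounded components). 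Both arguments are sound; the paper's is a quicker norm estimate, yours makes the topological content of polynomial convexity explicit.
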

\begin{proof}
Take $\lambda \notin \sigma(M_0)$.  Then $(\lambda-M_C)$ is invertible. In fact we may  set 
$$
X=\begin{pmatrix} (\lambda-A)^{-1} & (\lambda-A)^{-1} C (\lambda-B)^{-1} \\
&  (\lambda-B)^{-1}\end{pmatrix}
$$
and multiply $X (\lambda-M_C) = 1$.  Thus we have
\begin{equation}\label{aakulmalla}
(\lambda - M_C)^{-1} = \begin{pmatrix} (\lambda-A)^{-1} & (\lambda-A)^{-1} C (\lambda-B)^{-1} \\
&  (\lambda-B)^{-1}\end{pmatrix}.
\end{equation}
Choose then an arbitrary  boundary point $\lambda_0$ of $\widehat{\sigma(M_0)}$. Then $\|(\lambda  - M_0)^{-1}\| \rightarrow \infty$ as $\lambda \rightarrow \lambda_0$ from outside of $\widehat{\sigma(M_0)}$.  Then necessarily $\|(\lambda-M_C)^{-1}\| \rightarrow \infty$ as well.  Thus, $\lambda_0$ is a also a boundary point of $\widehat{\sigma(M_C)}$. 

\end{proof}

We shall be mainly interested in classes where spectra do not have interior points.  Then    in particular $\sigma(M_C)= \sigma(A) \cup \sigma(B)$.  However,   in general the inclusion in (\ref{osanavain}) can be proper.

\begin{example}\label{outounitary} ([17]) Let $H= l^2(\mathbb N) \oplus l^2(\mathbb N)$
and $A=S$ the forward shift, $B=S^*$ the backward shift and 
$$
C:  \{\eta_j\} \mapsto \{ \eta_1, 0, 0, \dots\}.
$$
Then $M_C= \begin{pmatrix} S&C\\&S^*\end{pmatrix}$ is unitary  and  $\sigma(M_C) = \partial \mathbb D$ while $\sigma(M_0)= \overline{\mathbb D}$ 
so that
$
\sigma(M_C) \subsetneq \sigma(M_0) .
$ In particular $M_C$ is invertible 
$
M_C^{-1}= M_C^*= \begin{pmatrix} S^*&\\ C^*& S\end{pmatrix}
$.  However,  $M_C^{-1}$ is not upper block triangular and hence not in the closed subalgebra generated by the  nonnegative powers of $M_C$.
\end{example}

Notice that if   the {\it Sylvester equation} 
\begin{equation}\label{sylvester}
AX-XB=C
\end{equation} has a solution $X$, then we have
\begin{equation}
\begin{pmatrix} I&X\\
&I\end{pmatrix}\begin{pmatrix} A&C\\
&B\end{pmatrix}\begin{pmatrix} I&-X\\
&I\end{pmatrix}=\begin{pmatrix} A\\
&B\end{pmatrix}
\end{equation}
and in particular, $M_C$ and $M_0$ have the same spectrum.  Further, since

\begin{equation}
\begin{pmatrix} A&C\\
&B\end{pmatrix}=
\begin{pmatrix} I&\\
&B\end{pmatrix}\begin{pmatrix} I&C\\
&I\end{pmatrix}\begin{pmatrix} A\\
&I\end{pmatrix}.
\end{equation}
we see that if $M_C$ is invertible, then $A$ is left invertible and $B$ invertible from right.

\bigskip

   \begin{definition}
 
 The {\it approximate defect spectrum} $\sigma_{\delta}(A)$ is the set 
 $$\sigma_{\delta}(A)=\{\lambda\in \mathbb C \ : \ \lambda-A  \  \text { is not onto }  \}.
 $$
 The approximate point spectrum $\sigma_a(A)$ is the set  of $\lambda \in \sigma(A)$ for which there  exists a sequence $\{x_n\}$ of unit vectors  such that $\| Ax_n - \lambda x_n\| \rightarrow 0$.

 \end{definition}
 
 \begin{theorem} {\rm (Davis and Rosenthal [13] )} If
\begin{equation}\label{sylvesterinehto}
 \sigma_\delta(A) \cap \sigma_a(B) = \emptyset
 \end{equation}
 holds, then the Sylvester equation (\ref{sylvester})
 has a solution for every $C$.
 If $A$ and $B$ act in Hilbert spaces, then  (\ref{sylvesterinehto}) is also  necessary.
 \end{theorem}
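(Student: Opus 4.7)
The plan is to recast the question as the surjectivity of a single operator: introduce the Sylvester operator $\mathcal L\colon \mathcal B(Y,X)\to \mathcal B(Y,X)$ by $\mathcal L(T)=AT-TB$, so that $AX-XB=C$ has a solution for every $C$ precisely when $\mathcal L$ is surjective, i.e.\ $0\notin \sigma_\delta(\mathcal L)$. I would decompose $\mathcal L = L_A - R_B$, where $L_A(T)=AT$ and $R_B(T)=TB$ are bounded operators on $\mathcal B(Y,X)$; the key structural fact is that $L_A$ and $R_B$ commute.

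For the sufficiency direction I would establish two spectral identities. The first, $\sigma_\delta(L_A)=\sigma_\delta(A)$, is immediate from the columnwise action of $L_A-\lambda = L_{A-\lambda}$. The second, $\sigma_\delta(R_B)=\sigma_a(B)$, is more delicate: $(R_B-\lambda)(T)=T(B-\lambda)$ is surjective iff every $S\in \mathcal B(Y,X)$ factors through $B-\lambda$, and a Hahn--Banach style extension argument (defining $T$ on $\operatorname{ran}(B-\lambda)$ by $T((B-\lambda)y)=Sy$ and extending to all of $Y$) shows this happens precisely when $B-\lambda$ is bounded below. Having these, I would invoke the spectral mapping theorem for the surjectivity spectrum of commuting operators, giving an inclusion of the form
\begin{equation*}
\sigma_\delta(L_A-R_B)\;\subseteq\;\sigma_\delta(L_A)-\sigma_a(R_B)\;=\;\sigma_\delta(A)-\sigma_a(B).
\end{equation*}
The hypothesis $\sigma_\delta(A)\cap \sigma_a(B)=\emptyset$ then forces $0\notin \sigma_\delta(\mathcal L)$, so $\mathcal L$ is surjective and the Sylvester equation is solvable for every~$C$.

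For the converse in Hilbert spaces, assume $\lambda_0\in \sigma_\delta(A)\cap \sigma_a(B)$. Using the Hilbert-space identity $\sigma_\delta(A)=\sigma_a(A^*)$, I would choose unit vectors $u_n\in X$ with $\|(A^*-\overline{\lambda_0})u_n\|\to 0$ and unit vectors $v_n\in Y$ with $\|(B-\lambda_0)v_n\|\to 0$. A direct expansion
\begin{equation*}
\langle \mathcal L(T)v_n,u_n\rangle = \langle Tv_n, (A^*-\overline{\lambda_0})u_n\rangle - \langle T(B-\lambda_0)v_n,u_n\rangle
\end{equation*}
shows that $\langle \mathcal L(T)v_n,u_n\rangle \to 0$ for every $T\in \mathcal B(Y,X)$. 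It then suffices to exhibit a $C$ with $\langle Cv_n,u_n\rangle \not\to 0$; after passing to weakly convergent subsequences of $(u_n)$ and $(v_n)$, a suitable rank-one operator $C$ built from weak limits or coordinate-like functionals will satisfy this, contradicting solvability.

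The main obstacle is the spectral mapping for the surjectivity spectrum of commuting operators used in the sufficiency argument. This inclusion is genuinely finer than the classical Lumer--Rosenblum result $\sigma(L_A-R_B)\subseteq \sigma(A)-\sigma(B)$, and in the general Banach space setting it may have to be proved by hand by exploiting the specific structure of $L_A$ and $R_B$ on $\mathcal B(Y,X)$, rather than invoking Taylor/Harte joint-spectrum machinery directly.
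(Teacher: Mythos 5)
The paper itself gives no proof of this theorem (it is quoted from Davis--Rosenthal [13]), so your proposal has to stand on its own, and as written it has genuine gaps in both directions. For sufficiency, which is asserted for arbitrary Banach spaces, your two auxiliary identities are only Hilbert-space facts and your justifications do not survive the Banach setting. The inclusion $\sigma_\delta(L_A)\subseteq\sigma_\delta(A)$ requires that every $S\in\mathcal B(Y,X)$ factor as $S=(A-\lambda)T$ whenever $A-\lambda$ is onto; this is a lifting problem, solvable when $A-\lambda$ has a bounded right inverse (Hilbert $X$), but false in general when $\ker(A-\lambda)$ is uncomplemented (take $S$ the induced isomorphism $X/\ker(A-\lambda)\to X$). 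Similarly, your proof of $\sigma_\delta(R_B)\subseteq\sigma_a(B)$ extends the operator $T_0\colon (B-\lambda)y\mapsto Sy$ from the closed subspace $\operatorname{ran}(B-\lambda)$ to all of $Y$; Hahn--Banach extends \emph{functionals}, not vector-valued operators, and such an extension needs the range to be complemented (or $X$ injective), again a Hilbert-space luxury. Finally, the step you yourself flag --- the inclusion $\sigma_\delta(L_A-R_B)\subseteq\sigma_\delta(L_A)-\sigma_\delta(R_B)$ for the commuting pair --- is exactly the hard half of a spectral mapping theorem for the surjectivity spectrum (the elementary computation only gives the reverse-type inclusion $p(\sigma_\delta(L_A,R_B))\subseteq\sigma_\delta(p(L_A,R_B))$), so the heart of the sufficiency argument is missing even in the Hilbert case; it is a real theorem (S{\l}odkowski--\.Zelazko/Harte-type subspectrum theory, or the known formula $\sigma_\delta(\tau_{AB})=\sigma_\delta(A)-\sigma_a(B)$ for elementary operators on Hilbert space), but it is not something you can wave at.

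In the necessity direction your identity $\langle\mathcal L(T)v_n,u_n\rangle=\langle Tv_n,(A^*-\overline{\lambda_0})u_n\rangle-\langle T(B-\lambda_0)v_n,u_n\rangle$ is correct, but the plan to finish by exhibiting one fixed $C$ with $\langle Cv_n,u_n\rangle\not\to0$ cannot work in general: take $A=A^*=\operatorname{diag}(1/n)$ on $\ell_2$ and $B=0$ on $\mathbb C$, so $u_n=e_n\rightharpoonup0$ and $v_n\equiv v$; then $\langle Cv_n,u_n\rangle=\langle Cv,e_n\rangle\to0$ for \emph{every} bounded $C$, rank one or not, no matter how you pass to subsequences. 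The standard (and easy) repair is to use $n$-dependent test operators: the functionals $\phi_n(T)=\langle Tv_n,u_n\rangle$ have norm $1$ (witnessed by $C_n=u_n\otimes v_n$), while your identity gives $\|\phi_n\circ\mathcal L\|\le\|(A^*-\overline{\lambda_0})u_n\|+\|(B-\lambda_0)v_n\|\to0$; if $\mathcal L$ were onto, the open mapping theorem (equivalently, surjectivity forces the adjoint to be bounded below) would give $1=\|\phi_n\|\le M\,\|\phi_n\circ\mathcal L\|\to0$, a contradiction. So the necessity argument is salvageable, but not by the route you describe, and the sufficiency argument needs either the restriction to Hilbert spaces plus a proof of the spectral-mapping inclusion, or a genuinely different Banach-space mechanism as in [13].
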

 
In Hilbert spaces the following holds (Corollary 3.4 in [28]):

\begin{theorem}   Let $A\in \mathcal B (H_1)$,  $B\in \mathcal B (H_2)$.  Then  for every $C\in \mathcal  B(H_2,H_1)$ we have
$$
\sigma(M_C)= \sigma(M_0)
$$
if one of the  following conditions hold

(i) $ \sigma(A) \cap \sigma(B) $ has empty interior

(ii)  $A^*$ or $B$ has SVEP

\end{theorem}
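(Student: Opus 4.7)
The plan is to establish the reverse inclusion $\sigma(M_0)\subset\sigma(M_C)$, since the forward one together with $\widehat{\sigma(M_C)}=\widehat{\sigma(M_0)}$ is already in Lemma~\ref{spektritsamat}. Equality of the polynomially convex hulls means the unbounded component of $\mathbb{C}\setminus\sigma(M_C)$ sits inside $\mathbb{C}\setminus\sigma(M_0)$, so any prospective $\lambda_0\in\sigma(M_0)\setminus\sigma(M_C)$ must lie in some bounded connected component $V$ of $\mathbb{C}\setminus\sigma(M_C)$. The whole argument reduces to contradicting $V\cap\sigma(M_0)\neq\emptyset$.

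On $V$ the resolvent is analytic, so the corner functions
\begin{equation*}
f(\zeta):=P_1(\zeta-M_C)^{-1}J_1,\qquad g(\zeta):=P_2(\zeta-M_C)^{-1}J_2
\end{equation*}
are analytic operator-valued maps on $V$, where $J_i$, $P_i$ are the canonical inclusion and projection for the $i$-th summand. Testing $(\zeta-M_C)(\zeta-M_C)^{-1}=I$ on $(0,v)$ and $(\zeta-M_C)^{-1}(\zeta-M_C)=I$ on $(x,0)$ immediately yields the one-sided identities
\begin{equation*}
(\zeta-B)g(\zeta)=I,\qquad f(\zeta)(\zeta-A)=I \qquad (\zeta\in V).
\end{equation*}
For case (ii) with $B$ having SVEP, I would introduce $h(\zeta):=g(\zeta)(\zeta-B)-I$; then $(\zeta-B)h(\zeta)v\equiv 0$ on $V$ for each $v\in H_2$, so SVEP of $B$ forces $h\equiv 0$ on $V$. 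Thus $V\cap\sigma(B)=\emptyset$; with $\zeta-B$ injective on $V$, the second component of $(\zeta-M_C)^{-1}(u,0)$ must vanish, which gives $(\zeta-A)f(\zeta)=I$ as well, and hence $V\cap\sigma(A)=\emptyset$, a contradiction. The case ``$A^*$ has SVEP'' I would reduce to the previous one by duality: $M_C^*$ is lower triangular on $H_1\oplus H_2$, and reversing the order of the summands presents it as an upper triangular operator with diagonal blocks $(B^*,A^*)$; then $A^*$ is the lower-right block, the previous case applies, and $\sigma(M_C^*)=\sigma(M_0^*)$ gives $\sigma(M_C)=\sigma(M_0)$ upon conjugating.

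Under (i), the same framework is used, but the SVEP argument is replaced by the observation $\sigma(M_0)\setminus\sigma(M_C)\subset\sigma(A)\cap\sigma(B)$; this comes from the direct unipotent factorization
\begin{equation*}
\begin{pmatrix}I & C(\lambda-B)^{-1}\\ 0 & I\end{pmatrix}(\lambda-M_C)=\begin{pmatrix}\lambda-A & 0\\ 0 & \lambda-B\end{pmatrix}
\end{equation*}
valid when $\lambda\notin\sigma(B)$, together with a symmetric version when $\lambda\notin\sigma(A)$. If $V\subset\sigma(M_0)$, then $V$ is a nonempty open subset of $\sigma(A)\cap\sigma(B)$, contradicting the empty-interior hypothesis. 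Otherwise $V\setminus\sigma(M_0)$ is a nonempty open subset of the connected set $V$ on which formula~(\ref{aakulmalla}) identifies $f(\zeta)$ with $(\zeta-A)^{-1}$ and $g(\zeta)$ with $(\zeta-B)^{-1}$; operator-valued analytic continuation across $V$ promotes the two-sided inverse identities from this piece to all of $V$, giving $V\cap\sigma(A)=V\cap\sigma(B)=\emptyset$. The trickiest step, I expect, is the duality argument in (ii): one has to keep bookkeeping straight so that, after adjoint and coordinate swap, the SVEP hypothesis on $A^*$ lands on the lower-right block of the rewritten upper triangular matrix and spectra transform consistently under complex conjugation.
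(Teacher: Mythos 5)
Your argument is correct, and it is worth noting that the paper itself offers no proof of this statement: it is quoted as Corollary 3.4 of [28] (Huang--Liu--Chen), so your proposal supplies a self-contained argument where the paper simply defers to the literature. Your route is essentially the standard local-spectral-theory one underlying such results: from $(\zeta-M_C)(\zeta-M_C)^{-1}=I$ and $(\zeta-M_C)^{-1}(\zeta-M_C)=I$ you correctly extract the one-sided identities $(\zeta-B)g(\zeta)=I$ and $f(\zeta)(\zeta-A)=I$ on any open $V\subset\rho(M_C)$ (being careful, as you implicitly are, that the resolvent of $M_C$ need not be upper triangular there, cf. Example \ref{outounitary}); SVEP of $B$ then kills $h(\zeta)=g(\zeta)(\zeta-B)-I$ via the identity theorem, invertibility of $\zeta-B$ forces the $(2,1)$ block of the resolvent to vanish through $(\zeta-B)R_{21}=0$, and $\zeta-A$ becomes two-sidedly invertible; the $A^*$ case reduces correctly by the unitary swap of summands, which places $A^*$ in the lower-right corner of an upper triangular matrix, with spectra transforming by complex conjugation. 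For (i), your two ingredients -- the unipotent factorizations giving $\sigma(M_0)\setminus\sigma(M_C)\subset\sigma(A)\cap\sigma(B)$, and the dichotomy on a component $V$ of $\rho(M_C)$ (either $V\subset\sigma(M_0)$, contradicting empty interior, or analytic continuation of $(\zeta-A)f(\zeta)-I$ and $g(\zeta)(\zeta-B)-I$ from $V\cap\rho(M_0)$ to all of $V$) -- are both sound; note only that boundedness of $V$ (hence Lemma \ref{spektritsamat}) is never actually needed, since in the first horn $V\subset\sigma(M_0)$ is automatically bounded and in the second horn, as in case (ii), the argument works on any component of $\rho(M_C)$. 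So your proof is complete and, if anything, slightly more economical than the framing suggests.
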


\subsection{Perturbation results}

\bigskip

 \begin{proposition}
 $M_C$ is of finite rank if and only if  $A, B, C$ are of finite rank.
 \end{proposition}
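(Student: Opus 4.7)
The plan is to prove both directions directly from the definition of finite rank, using that the range of a composition $P T Q$ is contained in the image under $P$ of the range of $T$, and hence $\mathrm{rank}(PTQ) \le \mathrm{rank}(T)$ for bounded $P, Q$.

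For the sufficiency direction, suppose $A$, $B$, $C$ are all of finite rank. Since the range of $M_C$ on $(x,y) \in X \oplus Y$ equals $(Ax + Cy, By)$, we have
\begin{equation}
\mathrm{range}(M_C) \subseteq \bigl(\mathrm{range}(A) + \mathrm{range}(C)\bigr) \oplus \mathrm{range}(B),
\end{equation}
which is finite-dimensional as a sum of three finite-dimensional subspaces. Thus $M_C$ is of finite rank, with $\mathrm{rank}(M_C) \le \mathrm{rank}(A) + \mathrm{rank}(B) + \mathrm{rank}(C)$.

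For the necessity direction, assume $M_C$ has finite rank. Let $\iota_X : X \to X\oplus Y$, $\iota_Y : Y \to X \oplus Y$ denote the canonical inclusions, and $\pi_X$, $\pi_Y$ the corresponding projections; these are bounded operators. Then $A = \pi_X M_C \iota_X$, $B = \pi_Y M_C \iota_Y$, and $C = \pi_X M_C \iota_Y$ (the last identity uses the block-triangular form: $M_C(0,y) = (Cy, By)$, so projecting to $X$ gives $Cy$). Each of these three operators is therefore of the form $P M_C Q$ for bounded $P, Q$, hence each has rank at most $\mathrm{rank}(M_C) < \infty$. This completes the equivalence.

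There is essentially no obstacle: the argument is a routine application of the fact that rank is not increased under pre- or post-composition by bounded operators. The only small point to keep track of is that in writing $C = \pi_X M_C \iota_Y$ one uses the triangular structure to isolate $C$ from $B$ in the second column; this is why the statement concerns $C$ (the $(1,2)$ block) rather than a generic entry of the block matrix.
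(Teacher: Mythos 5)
Your proof is correct; the paper itself dismisses this statement as obvious (noting only the bound $\mathrm{rank}\, M_C \le \mathrm{rank}\, A + \mathrm{rank}\, B + \mathrm{rank}\, C$, which you also obtain), and your argument is just the standard verification spelled out. Both directions are sound, including the use of the triangular structure to recover $C$ as $\pi_X M_C \iota_Y$.
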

 
 \begin{proof}
 This is obvious.
 \end{proof}
 We have  ${\rm rank} \ M_C \le {\rm rank}  \ A + {\rm rank} \ B + {\rm rank} \ C$.

 \begin{proposition} $M_C$ is compact if and only if all $A,B,C$ are compact.
 \end{proposition}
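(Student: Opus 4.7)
The plan is to use the fact that compactness is preserved under composition with bounded operators on either side, together with the natural projections and embeddings associated with the direct sum decomposition $X \oplus Y$. Let $P_X, P_Y$ denote the projections from $X \oplus Y$ onto the first and second summands, and $J_X, J_Y$ the canonical embeddings of $X$ and $Y$ into $X \oplus Y$; each of these four operators is bounded.

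For the forward direction, I would observe the identities $A = P_X M_C J_X$, $B = P_Y M_C J_Y$, and $C = P_X M_C J_Y$, which follow from a direct computation using $M_C(x,y) = (Ax + Cy, By)$. If $M_C$ is compact, then each of $A$, $B$, $C$ is compact, being a composition of bounded operators with a compact one.

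For the converse, I would write $M_C = J_X A P_X + J_X C P_Y + J_Y B P_Y$, which is immediate from the block decomposition. Each summand is compact, again because composing a compact operator with bounded operators on either side produces a compact operator. Since $\mathcal K(X \oplus Y)$ is closed under addition, $M_C$ is compact.

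There is no real obstacle here; the proof is a routine application of the ideal property of $\mathcal K$ together with the block matrix identities. The same pattern of argument, incidentally, will apply in many of the later stability statements for $M_C$ under the various operator classes discussed in the paper.
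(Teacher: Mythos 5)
Your argument is correct and complete. It differs in route from the paper's proof, which is a one-line appeal to the sequential characterization of compactness: take a bounded sequence $\{(x_n,y_n)\}$ in $X\oplus Y$ and extract convergent subsequences coordinatewise (and, conversely, test $A$, $B$, $C$ on bounded sequences of the form $(x_n,0)$ and $(0,y_n)$). You instead exploit the two-sided ideal property of $\mathcal K$ together with the block identities $A=P_XM_CJ_X$, $B=P_YM_CJ_Y$, $C=P_XM_CJ_Y$ and $M_C=J_XAP_X+J_XCP_Y+J_YBP_Y$, all of which are easily checked from $M_C(x,y)=(Ax+Cy,By)$. The paper's approach is the more "bare hands" one, working directly from the definition of compactness; yours is more algebraic and, as you note, transfers verbatim to any class that is a closed-under-addition two-sided ideal in $\mathcal B$ (finite rank being the obvious companion case in the preceding proposition), though it does not extend to classes such as algebraic, almost algebraic, Riesz or quasialgebraic operators treated later in Section 4, which are not ideals and require the spectral or resolvent-based arguments the paper uses there.
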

\begin{proof} This follows from considering bounded sequences $\{(x_n,y_n)\}$ and asking for a convergent subsequence.
\end{proof} 
\begin{proposition} $M_C$  is algebraic if and only if $A$ and $B$ are algebraic.
\end{proposition}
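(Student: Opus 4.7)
The plan is to establish, once and for all, that raising $M_C$ to a power (and hence evaluating any polynomial at $M_C$) preserves the upper block-triangular form with diagonal blocks obtained by doing the same to $A$ and $B$. Once this observation is in hand, both implications drop out almost immediately.

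First I would prove by a routine induction on $n$ that
\begin{equation*}
M_C^n = \begin{pmatrix} A^n & C_n \\ & B^n \end{pmatrix}
\end{equation*}
for some bounded operator $C_n \in \mathcal{B}(Y,X)$. The induction step uses only that the product of two upper block-triangular matrices is again upper block-triangular with diagonal blocks equal to the products of the corresponding diagonal blocks. Taking linear combinations, for every polynomial $p$ we get
\begin{equation*}
p(M_C) = \begin{pmatrix} p(A) & X_p \\ & p(B) \end{pmatrix}
\end{equation*}
with $X_p \in \mathcal B(Y,X)$ depending on $p$, $A$, $B$, $C$.

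For the ($\Rightarrow$) direction, suppose $q$ is a nontrivial polynomial with $q(M_C)=0$. Reading off the diagonal blocks of the displayed formula immediately gives $q(A)=0$ and $q(B)=0$, so both $A$ and $B$ are algebraic.

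For the ($\Leftarrow$) direction, let $p_A$ and $p_B$ be annihilating polynomials for $A$ and $B$, respectively, and set $r = p_A p_B$. Then
\begin{equation*}
p_A(M_C) = \begin{pmatrix} 0 & X \\ & p_A(B) \end{pmatrix}, \qquad p_B(M_C) = \begin{pmatrix} p_B(A) & Y \\ & 0 \end{pmatrix},
\end{equation*}
and multiplying these two block-triangular operators out, every one of the four blocks is a sum of products in which one factor vanishes. Hence $r(M_C) = p_A(M_C)\,p_B(M_C) = 0$, so $M_C$ is algebraic. There is no real obstacle here; the only point to verify carefully is that the unknown corner blocks $X$ and $Y$ cancel out in the product, which they do because they are always multiplied against a zero block.
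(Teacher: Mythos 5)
Your proof is correct, and it follows the same skeleton as the paper's (read off the diagonal blocks of $p(M_C)$ for necessity; multiply annihilating polynomials for sufficiency), but your finishing step is genuinely sharper. The paper stops at the observation that $(pq)(M_C)$ has zero diagonal blocks, i.e.\ is a square-zero operator, and therefore implicitly uses $(pq)^2$ as the annihilating polynomial -- this is why it records the bound $\deg M_C \le 2(\deg A + \deg B)$. You instead evaluate $p_A(M_C)$ and $p_B(M_C)$ separately and multiply them \emph{in that order}, so that in each block of the product one factor is the zero block coming from $p_A(A)=0$ or $p_B(B)=0$; combined with multiplicativity of the polynomial calculus, $(p_Ap_B)(M_C)=p_A(M_C)p_B(M_C)=0$, so $p_Ap_B$ itself annihilates $M_C$ and no squaring is needed. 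This buys the better estimate $\deg M_C \le \deg A + \deg B$, and as a byproduct it shows that the corner entry of $(p_Ap_B)(M_C)$ in the paper's display actually vanishes. The only point worth making explicit is the one you flag: the blockwise cancellation depends on the chosen order of the factors (the reverse order leaves the corner block $p_B(A)X + Yp_A(B)$, which is not visibly zero), but since both factors are polynomials in $M_C$ they commute, so the conclusion is unaffected.
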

\begin{proof} If $p(M_C)=0$ then clearly $p(A)=0$ and $p(B)=0$.   Suppose $p(A)=0$ and $q(B)=0$.  Then $(pq)(A)=0$ and $(pq)(B)=0$ so that
$$
(pq)(M_C)= \begin{pmatrix} 0& (pq)[A,B](C)\\
&0
\end{pmatrix}. 
$$
\end{proof}
In particular ${\rm deg} \ M_C \le 2 ({\rm deg} \ A + {\rm deg} \ B)$.    
 
\begin{proposition}\label{emseeaa} $M_C$ is almost algebraic  if and only if $A$ and $B$ are almost algebraic.
\end{proposition}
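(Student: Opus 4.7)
The plan is to prove the two directions separately: necessity by an elementary norm comparison, and sufficiency via the meromorphic-resolvent characterisation of $\mathcal{AA}$ quoted just above (Theorem 5.7.2 of [33]).

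For the ``only if'' direction, I would observe two structural facts about $M_C$. First, $X$, embedded as $X\oplus\{0\}$, is an invariant subspace of $M_C$ on which $M_C$ acts as $A$. Second, the canonical projection $\pi\colon X\oplus Y\to Y$ is a contraction that intertwines $M_C$ with $B$, since $\pi M_C(x,y)=By=B\pi(x,y)$. Consequently, for every polynomial $p$,
$$
\|p(A)\|\le \|p(M_C)\|\qquad\text{and}\qquad \|p(B)\|\le \|p(M_C)\|.
$$
Any monic sequence $(p_j)$ witnessing $M_C\in\mathcal{AA}$ therefore witnesses both $A\in\mathcal{AA}$ and $B\in\mathcal{AA}$ at the same time.

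For the ``if'' direction, direct block inversion (the Fredholm-form analogue of (\ref{aakulmalla})) gives
$$
(1-zM_C)^{-1}=\begin{pmatrix}(1-zA)^{-1} & z\,(1-zA)^{-1}C(1-zB)^{-1}\\ & (1-zB)^{-1}\end{pmatrix}
$$
wherever the right-hand side is defined. Under the hypothesis $A,B\in\mathcal{AA}$, both diagonal blocks are meromorphic on all of $\mathbb{C}$. The off-diagonal block, being a product of two meromorphic operator-valued functions (times the entire function $z$), is meromorphic as well: outside the discrete union of poles of the two factors it is holomorphic, while at any potential common singularity $z_0$ the Laurent expansions of the two factors multiply to a Laurent expansion with only finitely many negative-power terms. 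Hence every entry of $(1-zM_C)^{-1}$ is meromorphic on $\mathbb{C}$, so the resolvent itself is meromorphic and $M_C\in\mathcal{AA}$ by the cited theorem.

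The only (mild) obstacle is verifying that the product of two operator-valued meromorphic functions is again meromorphic, which reduces to the routine manipulation of Laurent expansions at each candidate pole; no substantive difficulty arises.
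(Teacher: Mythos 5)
Your proof is correct, but it coincides with the paper's argument only in the sufficiency direction: there the paper likewise writes the resolvent of $M_C$ in block form (\ref{aakulmalla}) and invokes the meromorphic-resolvent characterisation of $\mathcal A\mathcal A$ (Theorem 5.7.2 of [33]), the off-diagonal entry being harmless because a product of operator-valued meromorphic functions is again meromorphic. For the necessity direction the paper simply asserts that the resolvent of $M_C$ ``has the same singularities as the resolvents of $A$ and $B$ together,'' whereas you argue directly from the definition: $X\oplus\{0\}$ is invariant with $M_C$ acting there as $A$, and the projection onto $Y$ intertwines $M_C$ with $B$, so $\|p(A)\|\le\|p(M_C)\|$ and $\|p(B)\|\le\|p(M_C)\|$ for every polynomial, and the witnessing monic sequence for $M_C$ transfers verbatim to $A$ and $B$. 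This is a genuinely different, more elementary route for that direction, and it buys a real advantage: the identity (\ref{aakulmalla}) represents $(\lambda-M_C)^{-1}$ only where both $(\lambda-A)^{-1}$ and $(\lambda-B)^{-1}$ exist, and since the inclusion $\sigma(M_C)\subset\sigma(A)\cup\sigma(B)$ can be proper and the inverse of $M_C$ need not even be upper triangular (Lemma \ref{spektritsamat} and Example \ref{outounitary}), extracting meromorphy of the diagonal resolvents from meromorphy of $(\lambda-M_C)^{-1}$ requires an additional word that the paper leaves implicit; your norm inequality sidesteps that issue entirely. The paper's resolvent-based route is shorter and feeds directly into the quantitative $T_\infty$ estimate recorded right after the proposition, which your block computation in the sufficiency half would also yield; so the two arguments complement each other, with yours being the cleaner justification of the ``only if'' half.
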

 \begin{proof}
This follows from the characterization of the resolvent being meromorphic in $1/ \lambda$.   We have
\begin{equation}
(\lambda - M_C)^{-1} = \begin{pmatrix} (\lambda-A)^{-1} & (\lambda-A)^{-1} C (\lambda-B)^{-1} \\
&  (\lambda-B)^{-1}\end{pmatrix}
\end{equation}
and hence  the resolvent of $M_C$ has the same singularities as  the resolvents of $A$ and $B$  together.
  \end{proof} 
From  
$$
(I-zM_C)^{-1} = (I-zM_0)^{-1} + \begin{pmatrix}  0&  z(I-zA)^{-1}C(I-zB)^{-1}\\
& 0 
\end{pmatrix}
$$
we obtain
$$
T_{\infty} (r, (I-zM_C)^{-1}) \le 2 \ ( T_{\infty}(r, (I-zA)^{-1}) +T_{\infty}(r, (I-zB)^{-1})  
+ \log^+ r  + \mathcal O (1). 
$$

\begin{proposition}  $M_C$ is  a Riesz operator  if and only if both $A$ and $B$ are Riesz operators.
\end{proposition}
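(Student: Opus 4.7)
The plan is to work directly from the meromorphic block form of the resolvent
$$
(\lambda-M_C)^{-1}=\begin{pmatrix}(\lambda-A)^{-1}&(\lambda-A)^{-1}C(\lambda-B)^{-1}\\&(\lambda-B)^{-1}\end{pmatrix}
$$
from Lemma \ref{spektritsamat}, using the characterization of Riesz operators as those whose resolvent is meromorphic on $\mathbb{C}\setminus\{0\}$ with finite rank spectral projections at each nonzero pole.

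For the ``if'' direction, assume $A,B\in\mathcal R$. By Proposition \ref{emseeaa} the resolvent of $M_C$ is already meromorphic away from $0$, so it suffices to check that the spectral projection $P_{\lambda_0}(M_C)$ is finite rank at each nonzero pole $\lambda_0$. Integrating the block form entry-wise yields
$$
P_{\lambda_0}(M_C)=\begin{pmatrix}P_{\lambda_0}(A)&Q_{\lambda_0}\\0&P_{\lambda_0}(B)\end{pmatrix},\qquad Q_{\lambda_0}=\frac1{2\pi i}\oint_{\lambda_0}(\lambda-A)^{-1}C(\lambda-B)^{-1}d\lambda.
$$
The diagonal blocks are finite rank by hypothesis. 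For $Q_{\lambda_0}$ I would expand the Laurent series of $(\lambda-A)^{-1}$ and $(\lambda-B)^{-1}$ about $\lambda_0$: both principal parts have coefficients of finite rank (these are the coefficients $(A-\lambda_0)^{k-1}P_{\lambda_0}(A)$ and similarly for $B$). The residue $Q_{\lambda_0}$ is then a finite sum of products finite-rank $\cdot\,C\,\cdot$ holomorphic and holomorphic $\cdot\,C\,\cdot$ finite-rank (plus finite-rank both sides), so its range lies in the finite-dimensional range of the finite-rank factors. Thus $Q_{\lambda_0}$ has finite rank, $P_{\lambda_0}(M_C)$ has finite rank, and $M_C\in\mathcal R$.

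For the ``only if'' direction, assume $M_C\in\mathcal R$. Then $\sigma(M_C)$ is countable with $0$ as its only possible accumulation point, so $\widehat{\sigma(M_C)}=\sigma(M_C)$, and Lemma \ref{spektritsamat} gives $\sigma(M_0)\subset\widehat{\sigma(M_0)}=\widehat{\sigma(M_C)}=\sigma(M_C)\subset\sigma(M_0)$, whence $\sigma(A)\cup\sigma(B)=\sigma(M_C)$; in particular every nonzero point of $\sigma(A)$ or $\sigma(B)$ is isolated. Fix such a $\lambda_0\in\sigma(A)\setminus\{0\}$ and a small contour around $\lambda_0$ avoiding the rest of $\sigma(M_C)$. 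Integrating the block formula for $(\lambda-M_C)^{-1}$ along this contour produces the same $2\times 2$ block decomposition of $P_{\lambda_0}(M_C)$ as above. Since the range of $P_{\lambda_0}(A)$ embeds as the range of $P_{\lambda_0}(M_C)$ applied to vectors of the form $(x,0)$, it sits inside the finite-dimensional range of $P_{\lambda_0}(M_C)$, hence is finite-dimensional. An isolated spectral point with finite-dimensional spectral projection is automatically a pole (the restriction of $A-\lambda_0$ to $P_{\lambda_0}(A)X$ is a finite-dimensional nilpotent), so $\lambda_0$ is a pole of $(\lambda-A)^{-1}$ with finite-dimensional invariant subspace. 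Thus $A\in\mathcal R$, and symmetrically $B\in\mathcal R$.

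The main obstacle is the ``if'' direction, specifically showing that the off-diagonal residue $Q_{\lambda_0}$ is finite rank; everything else is bookkeeping around the explicit resolvent formula. Once the Laurent expansion argument is set up, the finite rank of $Q_{\lambda_0}$ follows because every surviving term in the residue is forced to carry at least one principal-part coefficient, which lives in a finite-dimensional subspace.
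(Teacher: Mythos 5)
Your proof is correct, and in the sufficiency direction it is essentially the paper's argument in different clothing: the paper splits $X$ and $Y$ along the finite-dimensional spectral subspaces of $A$ and $B$ at $\lambda_0$ (writing $A=A_m\oplus A_\infty$, $B=B_n\oplus B_\infty$ and $C$ in the corresponding $2\times2$ block form, so that only the holomorphic lower-right corner is not finite-rank valued), while you split the Laurent series of the two resolvent factors; in both cases the point is that every term contributing to the residue $\sum_{i+j=-1}a_i\,C\,b_j$ carries a principal-part coefficient, and those coefficients are finite rank. One small wording slip: for a term of the form (holomorphic)$\,\cdot C\cdot\,$(finite rank) the range is not contained in the range of the finite-rank factor; the correct (and equally immediate) justification is ${\rm rank}(HCF)\le{\rm rank}(F)$, so the conclusion stands. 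Where you genuinely depart from the paper is the necessity direction: the paper disposes of it in one sentence by saying the poles of the diagonal blocks are poles of $(\lambda-M_C)^{-1}$, whereas you first combine Lemma \ref{spektritsamat} with the countability of $\sigma(M_C)$ to get $\widehat{\sigma(M_C)}=\sigma(M_C)$ and hence $\sigma(M_0)=\sigma(M_C)$, and only then compare spectral projections, embedding the ranges of $P_{\lambda_0}(A)$ and (via the coordinate projection) $P_{\lambda_0}(B)$ into the finite-dimensional range of $P_{\lambda_0}(M_C)$ and using that an isolated spectral point with finite-rank Riesz projection is a pole. This extra care is worthwhile rather than redundant: Example \ref{outounitary} shows that in general $\sigma(M_C)$ can be strictly smaller than $\sigma(M_0)$, so without the hull argument one cannot even assert that a nonzero point of $\sigma(A)\cup\sigma(B)$ is isolated; your version makes explicit a step the paper leaves to the reader.
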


\begin{proof}    As  the resolvents of $A$ and $B$ jointly have the same poles as the resolvent of $M_C$ the necessity follows from the block structure. 

In the other direction, if
 $\lambda_0\not=0$ is a  pole of either $A$ or $B$ but not both, the claim follows again from (\ref{aakulmalla}). In fact, this is seen from example from the  spectral projection

\begin{equation}
P= \frac{1}{2\pi i}  \int_\gamma (\lambda - M_C)^{-1} d\lambda
\end{equation}
with  $\gamma= \{\lambda : |\lambda-\lambda_0|= \varepsilon\}$, $\varepsilon>0$ small   enough so that all other spectral points stay outside of $\gamma$, where  the block form makes it straightforward to see that the rank of the projection is  finite.     

Suppose  therefore that $A-\lambda_0$ and $B-\lambda_0$ both have  finite dimensional null spaces, say  dimension $m$ and $n$.  It is instructive to   decompose $A=A_m \oplus A_\infty$ and $B=B_n \oplus B_\infty$  where $A_m$ and $B_n$ are the restrictions  onto the finite dimensional invariant subspaces, respectively and $A_\infty$ and $B_\infty$ likewise operate in the complementary invariant subspaces.
Then decompose $C$ into block form
$$
C= \begin{pmatrix} C_{m,n}& C_{m,\infty}\\
C_{\infty,n}& C_{\infty,\infty}
\end{pmatrix}
$$
where $C_{m,n}$ can be thought as an $m \times n$- matrix,  while $ C_{m,\infty}$   and   $C_{\infty,n}$  are finite rank operators as well.  Writing the resolvent into block form   we have 
\begin{align*}
&(\lambda-A)^{-1} C (\lambda-B)^{-1}  \\
&= \begin{pmatrix}
(\lambda-A_m)^{-1} C_{m,n} (\lambda-B_m)^{-1} & (\lambda-A_m)^{-1} C_{m,\infty} (\lambda-B_\infty)^{-1}\\
(\lambda-A_\infty)^{-1} C_{\infty,n} (\lambda-B_n)^{-1}&(\lambda-A_\infty)^{-1} C_{\infty,\infty} (\lambda-B_\infty)^{-1}
\end{pmatrix}.
\end{align*}
Here the lower right hand corner is holomorphic near $\lambda_0$  while  the other three blocks are  finite rank valued functions and the claim follows.

\end{proof}

In Example \ref{kaksnilpoa}   the sum of two nilpotent operators sum up to the shift operator.  The structure of $M_C= M_0 + N$   prevents this type of  phenomenom to happen:  

\begin{proposition}
$M_C$ is quasinilpotent  if and only if both $A$ and $B$ are quasinilpotent.
\end{proposition}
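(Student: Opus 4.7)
The plan is to reduce the statement directly to the spectral identity from Lemma \ref{spektritsamat}, avoiding any explicit computation with powers $M_C^n$. The key observation is that the spectral radius depends only on the polynomially convex hull of the spectrum: for any bounded operator $T$ one has
$$\rho(T) = \max\{|\lambda| : \lambda \in \sigma(T)\} = \max\{|\lambda| : \lambda \in \widehat{\sigma(T)}\},$$
since enlarging the spectrum to its polynomial hull does not change the outer circle. Combined with $\widehat{\sigma(M_C)} = \widehat{\sigma(M_0)}$ and $\sigma(M_0) = \sigma(A) \cup \sigma(B)$, this immediately gives the identity
$$\rho(M_C) = \rho(M_0) = \max\{\rho(A),\, \rho(B)\}.$$

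Now the proposition follows in both directions at once: an operator is quasinilpotent precisely when its spectral radius is $0$, and $\max\{\rho(A),\rho(B)\} = 0$ is equivalent to $\rho(A) = \rho(B) = 0$. So $M_C$ is quasinilpotent iff both diagonal blocks are.

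There is essentially no obstacle here; the whole content has already been done in Lemma \ref{spektritsamat}. One could alternatively give a more hands-on proof of the nontrivial direction (that $M_C$ quasinilpotent forces $A$ and $B$ quasinilpotent) by noting that if $\lambda \in \sigma(A)$ lay outside $\sigma(M_C)$, then $(\lambda - M_C)^{-1}$ would be bounded while the top-left block of $(\lambda - M_C)^{-1}$ (which exists whenever $\lambda \notin \sigma(B)$) would blow up as $\lambda$ approaches the boundary of $\sigma(A)$; but this essentially repeats the proof of Lemma \ref{spektritsamat}, so the spectral radius argument above is cleaner.
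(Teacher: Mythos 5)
Your argument is correct and is essentially the paper's own proof: both rest entirely on Lemma \ref{spektritsamat}, the paper using the inclusion $\sigma(M_C)\subset\sigma(M_0)$ for one direction and the hull equality $\widehat{\sigma(M_C)}=\widehat{\sigma(M_0)}$ for the other, while you merely repackage the same two facts through the observation that the spectral radius is unchanged by passing to the polynomially convex hull. This is a clean unification, but not a genuinely different route.
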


\begin{proof}

We have  always $\sigma(M_C) \subset \sigma(A) \cup \sigma(B)$ and hence $M_C$ is quasinilpotent.  Reversely, if $M_C$ is quasinilpotent, then $M_0$ is quasinilpotent as well, by Lemma \ref {spektritsamat}.   

\end{proof}

\begin{proposition} $M_C$ is quasialgebraic if and only if $A$ and $B$ are quasialgebraic.
\end{proposition}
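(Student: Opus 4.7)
The plan is to route everything through Halmos's theorem, which identifies quasialgebraicity with vanishing logarithmic capacity of the spectrum, and then exploit Lemma \ref{spektritsamat} to replace $\sigma(M_C)$ by the more tractable set $\sigma(M_0)=\sigma(A)\cup\sigma(B)$.

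First I would apply Halmos's theorem cited above to rewrite the assertion as
\[
\mathrm{cap}(\sigma(M_C))=0 \ \Longleftrightarrow \ \mathrm{cap}(\sigma(A))=0 \ \text{ and } \ \mathrm{cap}(\sigma(B))=0.
\]
By Lemma \ref{spektritsamat} we have $\widehat{\sigma(M_C)}=\widehat{\sigma(M_0)}=\widehat{\sigma(A)\cup\sigma(B)}$. Since logarithmic capacity depends only on the unbounded component of the complement, it is unchanged under passage to the polynomially convex hull, so
\[
\mathrm{cap}(\sigma(M_C))=\mathrm{cap}(\widehat{\sigma(M_C)})=\mathrm{cap}(\widehat{\sigma(A)\cup\sigma(B)})=\mathrm{cap}(\sigma(A)\cup\sigma(B)).
\]

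The remaining step is the standard fact from potential theory that a finite union of compact sets has capacity zero if and only if each of its members does: monotonicity of capacity gives one direction trivially, while countable (hence finite) unions of polar sets are polar, giving the other. Together with the displayed identity this yields the desired equivalence, and hence the proposition via Halmos.

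The only nontrivial input is thus the potential-theoretic fact that capacity is preserved under polynomial hull and additive in the weak sense that $\mathrm{cap}(K_1\cup K_2)=0$ iff each $\mathrm{cap}(K_i)=0$; everything else is a direct quotation of Halmos's theorem and Lemma \ref{spektritsamat}. No appeal to the operator-theoretic machinery used in the previous propositions (meromorphic resolvents, block resolvent formula, spectral projections) is required, which is why the statement fits naturally at the end of this list.
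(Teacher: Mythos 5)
Your proof is correct and follows essentially the same route as the paper: both reduce the claim via Halmos's theorem and Lemma \ref{spektritsamat} to a capacity statement about $\sigma(A)\cup\sigma(B)$, and both use monotonicity of capacity together with the (countable, hence finite) subadditivity of polar sets. The only cosmetic difference is that you package both directions into the single identity $\mathrm{cap}(\sigma(M_C))=\mathrm{cap}(\sigma(A)\cup\sigma(B))$ via hull-invariance of capacity, whereas the paper handles the converse by observing that a capacity-zero spectrum coincides with its polynomially convex hull.
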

 \begin{proof}
Assume  first that $A$ and $B$ are quasialgebraic so that both  ${\rm cap}(\sigma(A))$ and $ {\rm cap}(\sigma(B))$ vanish. While the capacity is not in general subadditive, sets of  (logarithmic) capacity zero are polar and polar sets are countably subadditive [45].  Hence 
$$
{\rm cap}(\sigma(A) \cup \sigma(B)) =0
$$
and $M_0$ is quasialgberaic, too.  We know that $\sigma(M_C) \subset \sigma(M_0)$ and hence $M_C$ is quasialgebraic as the capacity is a monotonous set function.
On the other hand,  if $M_C$ is quasialgberaic, then its spectrum  is totally disconnect and $\widehat{\sigma(M_C)} = \sigma(M_C)$.
 By Lemma \ref{spektritsamat} we obtain
 $$
   {\rm cap} \big(\widehat {\sigma(A)} \cup \widehat {\sigma(B)}\big)=  {\rm cap} (\sigma (M_C)) = 0
   $$
   and so  both $A$ and $B$ must be quasialgebraic.

  \end{proof} 
  

In Example \ref{outounitary} we have $M_C$ unitary while $M_0$ is not normal.  

\begin{proposition}\label{mcnormaali}

If $M_C$ is normal and $C\not=0$, then $M_0$ is not normal.
\end{proposition}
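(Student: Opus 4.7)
The plan is to prove the contrapositive: if both $M_C$ is normal and $M_0$ is normal, then $C=0$. Since $M_0$ is normal precisely when $A$ and $B$ are both normal, this is what the statement reduces to.

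First I would simply expand $M_C M_C^*$ and $M_C^* M_C$ using the $2\times 2$ block multiplication. This yields
\begin{equation}
M_C M_C^* = \begin{pmatrix} AA^* + CC^* & CB^* \\ BC^* & BB^* \end{pmatrix}, \qquad
M_C^* M_C = \begin{pmatrix} A^*A & A^*C \\ C^*A & C^*C + B^*B \end{pmatrix}.
\end{equation}
Normality of $M_C$ forces these two block matrices to coincide. The diagonal blocks give the two identities
\begin{equation}
A^*A - AA^* = CC^*, \qquad BB^* - B^*B = C^*C.
\end{equation}

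Now I invoke the hypothesis that $M_0$ is normal, i.e. $A$ and $B$ are both normal. Then the left-hand sides of both identities vanish, leaving $CC^*=0$ and $C^*C=0$. Either one of these alone forces $C=0$: for instance, $\langle CC^* x, x\rangle = \|C^*x\|^2$, so $CC^*=0$ gives $C^*=0$ and hence $C=0$.

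The argument is essentially a one-step block calculation, so I do not anticipate any real obstacle beyond keeping track of the four block entries and observing that only the diagonal pair is needed. It is worth noting that the same two identities show that, without any assumption on $A$ and $B$, normality of $M_C$ already forces $A$ to be hyponormal ($A^*A \ge AA^*$) and $B$ to be cohyponormal ($BB^* \ge B^*B$), with the self-commutators of $A$ and $B^*$ controlled explicitly by the corner $C$; the stated proposition is then just the limiting case where both self-commutators are required to be zero.
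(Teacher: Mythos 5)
Your proof is correct and follows essentially the same route as the paper, which also compares the diagonal blocks of $M_C M_C^*$ and $M_C^* M_C$ (the paper uses only the $(2,2)$ block identity $C^*C + B^*B = BB^*$) and concludes $C^*C=0$, hence $C=0$. Your version just spells out both block identities and the positivity argument in more detail.
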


\begin{proof}
If $M_C$ is normal then $C^* C +B^*B=BB^*$ and thus $M_0$ is normal only if $C^*C=0$.
\end{proof}   
 
All  normal operators are quasitriangular. In  Example \ref{outounitary}   $M_0=  S\oplus S^*$  and it is known  [25] that $S$ is not quasitriangular.  Further,  $S\oplus 0=0$ is not quasitriangular but $S\oplus M$  is, where $M$ is diagonal  with   the closed unit disc as the spectrum [44]. Whether $S\oplus S^*$ is, was a question in [25].  Using  a result  in [15] we may formulate the following:
 
 \begin{proposition}  If $A$ and $B$ are quasitriangular, then so is $M_C$.
 \end{proposition}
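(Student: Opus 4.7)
The plan is to invoke the Apostol--Foias--Voiculescu theorem, which characterises quasitriangularity on a separable Hilbert space by a Fredholm index sign condition: $T\in\mathcal Q\mathcal T$ if and only if $\mathrm{ind}(T-\lambda)\ge 0$ at every $\lambda$ where $T-\lambda$ is semi-Fredholm. This translates the hypothesis $A,B\in\mathcal Q\mathcal T$ and the desired conclusion $M_C\in\mathcal Q\mathcal T$ into purely Fredholm-theoretic statements, reducing the proposition to the following claim: whenever $M_C-\lambda$ is semi-Fredholm, $\mathrm{ind}(M_C-\lambda)$ is bounded below by nonnegative contributions coming from $A$ and $B$.

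The key input, which I expect is precisely what the result cited from [15] supplies, is a block-triangular index formula of the form
\begin{equation*}
\mathrm{ind}(M_C-\lambda) = \mathrm{ind}(A-\lambda) + \mathrm{ind}(B-\lambda),
\end{equation*}
valid whenever $M_C-\lambda$ is semi-Fredholm, with the indices on the right interpreted in $\mathbb Z\cup\{\pm\infty\}$ so that the sum is well defined. Such a formula comes from the exact sequence
\begin{equation*}
0\to \ker(A-\lambda) \to \ker(M_C-\lambda) \to \ker(B-\lambda)
\end{equation*}
together with its cokernel counterpart, both induced by the triangular block structure. Once this formula is in hand, the proof is immediate: by AFV applied to $A$ and $B$, both summands on the right are nonnegative at every $\lambda$ where they are defined, hence so is $\mathrm{ind}(M_C-\lambda)$, and AFV applied in reverse then yields $M_C\in\mathcal Q\mathcal T$.

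The main obstacle is the validity of the index formula over the full semi-Fredholm range of $M_C$. The corner $C$ can turn a non-semi-Fredholm $M_0-\lambda$ into a semi-Fredholm $M_C-\lambda$ (Example \ref{outounitary} shows how $C$ may alter the spectrum drastically), so one cannot simply reduce to the block diagonal situation. Recovering the appropriate upper or lower semi-Fredholm status of $A-\lambda$ and $B-\lambda$ from that of $M_C-\lambda$, and checking additivity of indices in the extended sense, is the delicate bookkeeping step for which one would lean on [15] rather than reprove from scratch.
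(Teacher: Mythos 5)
Your overall strategy (the Apostol--Foias--Voiculescu index characterisation of quasitriangularity) can be made to work, but the key lemma you rely on is false as stated, and the reference you lean on does not supply it. The formula $\mathrm{ind}(M_C-\lambda)=\mathrm{ind}(A-\lambda)+\mathrm{ind}(B-\lambda)$, claimed on the whole semi-Fredholm domain of $M_C$ with extended indices, breaks down exactly in the delicate regime you flag. Concretely: let $A$ be an isometry of $\ell_2$ whose range has infinite codimension, $B$ a co-isometry with infinite-dimensional kernel, and let $C$ map $\ker B$ isometrically onto $(\mathrm{ran}\,A)^{\perp}$ and vanish on $(\ker B)^{\perp}$; then $M_C$ is invertible, so $\mathrm{ind}(M_C)=0$, while $\mathrm{ind}(A)=-\infty$ and $\mathrm{ind}(B)=+\infty$, so the right-hand side is not even well defined. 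Worse, $M_C-\lambda$ can be semi-Fredholm while one block is not semi-Fredholm at all (take $A$ the isometry onto the even coordinates, $C$ an isometry onto the odd ones, $B=0$: then $M_C$ is injective with closed range but $\mathrm{ind}(B)$ does not exist). Your exact sequence only yields one-sided estimates, since the map $\ker(M_C-\lambda)\to\ker(B-\lambda)$ need not be onto. Finally, [15] is the 1970 Douglas--Pearcy note, which predates the AFV theorem; it proves the proposition directly by constructing the approximating finite-rank projections and contains no index formula of this kind, so the "delicate bookkeeping" cannot be outsourced there --- it is the actual content of your proof. The paper itself simply cites that direct construction.

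The gap is repairable, precisely because the counterexamples above have non-quasitriangular blocks. If $M_C-\lambda$ is upper semi-Fredholm, then it is bounded below on a closed finite-codimensional subspace, hence so is $A-\lambda$ on a finite-codimensional subspace of $X$, i.e.\ $A-\lambda$ is upper semi-Fredholm; quasitriangularity of $A$ then forces $\mathrm{ind}(A-\lambda)\ge 0$, which together with $\dim\ker(A-\lambda)<\infty$ makes $A-\lambda$ genuinely Fredholm. Using the factorisation $M_C-\lambda=\bigl(I\oplus(B-\lambda)\bigr)\begin{pmatrix} I & C\\ 0 & I\end{pmatrix}\bigl((A-\lambda)\oplus I\bigr)$, with invertible middle factor and Fredholm right factor, one concludes that $B-\lambda$ is semi-Fredholm and that the indices do add, so $\mathrm{ind}(M_C-\lambda)=\mathrm{ind}(A-\lambda)+\mathrm{ind}(B-\lambda)\ge 0$ by AFV applied to $B$. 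If instead $M_C-\lambda$ is lower semi-Fredholm but not upper, its kernel is infinite dimensional and $\mathrm{ind}(M_C-\lambda)=+\infty$, so there is nothing to prove. With this case analysis your AFV argument closes, and it is then a genuinely different (and heavier, Hilbert-space-specific) route than the elementary projection argument of Douglas--Pearcy that the paper invokes.
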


 \subsection{ {\bf $M_C$ } polynomially almost algebraic, compact and Riesz}
 
 \bigskip
 
\begin{proposition} 
$M_C\in \mathcal P\mathcal A \mathcal A$  if  and only if both  $A\in \mathcal P\mathcal A \mathcal A$ and $B\in \mathcal P\mathcal A \mathcal A$.
\end{proposition}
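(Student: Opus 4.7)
The plan is to reduce everything to Proposition \ref{emseeaa} (the $\mathcal{A}\mathcal{A}$ version for $M_C$) by exploiting the fact that $p(M_C)$ remains block upper triangular for any polynomial $p$. A direct induction on $\deg p$ shows
\begin{equation*}
p(M_C) = \begin{pmatrix} p(A) & D_p \\ 0 & p(B) \end{pmatrix}
\end{equation*}
for some $D_p \in \mathcal{B}(Y,X)$ depending on $p, A, B, C$. Hence $p(M_C)$ is again of the same block upper triangular form, and applying Proposition \ref{emseeaa} to it gives
\begin{equation*}
p(M_C)\in \mathcal{A}\mathcal{A} \iff p(A)\in \mathcal{A}\mathcal{A}\ \text{ and }\ p(B)\in \mathcal{A}\mathcal{A}.
\end{equation*}

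The forward direction is then immediate: if $M_C \in \mathcal{P}\mathcal{A}\mathcal{A}$, any witnessing $p$ yields $p(A), p(B) \in \mathcal{A}\mathcal{A}$, so $A, B \in \mathcal{P}\mathcal{A}\mathcal{A}$.

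For the converse, given $p_A, p_B$ with $p_A(A), p_B(B) \in \mathcal{A}\mathcal{A}$, I would invoke the structure theorem (Theorem \ref{AAstructuuri}) to decompose $X = X_0 \oplus \cdots \oplus X_{d_A}$ with $A_0$ algebraic and $A_i - \alpha_i \in \mathcal{A}\mathcal{A}$, and analogously $Y = Y_0 \oplus \cdots \oplus Y_{d_B}$ with shifts $\beta_j$. Define
\begin{equation*}
p(z) = \prod_{i=1}^{d_A}(z - \alpha_i) \cdot \prod_{j=1}^{d_B}(z - \beta_j).
\end{equation*}
Since $p(A) = \bigoplus_i p(A_i)$ and a finite direct sum of $\mathcal{A}\mathcal{A}$ operators is again $\mathcal{A}\mathcal{A}$ (the resolvent decomposes blockwise), it suffices to check $p(A_i) \in \mathcal{A}\mathcal{A}$ for each $i$. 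The block $A_0$ is algebraic, so $p(A_0)$ is too. For $i \geq 1$, factor $p(z) = (z - \alpha_i) q_i(z)$ and set $T_i := A_i - \alpha_i \in \mathcal{A}\mathcal{A}$; then $p(A_i) = S_i(T_i)$ with $S_i(z) = z\, q_i(z + \alpha_i)$, a polynomial satisfying $S_i(0) = 0$. The same reasoning handles $p(B)$.

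The main technical obstacle is therefore the following subclaim: \emph{if $T \in \mathcal{A}\mathcal{A}$ and $S$ is a polynomial with $S(0) = 0$, then $S(T) \in \mathcal{A}\mathcal{A}$.} I would verify it via the resolvent characterization. By spectral mapping, $\sigma(S(T)) = S(\sigma(T))$, and since the only possible finite accumulation point of $\sigma(T)$ is $0$ (with $S(0)=0$), the same is true for $\sigma(S(T))$. For each isolated $\nu_0 \in \sigma(S(T)) \setminus \{0\}$, the spectral projection splits as $P_{\nu_0} = \sum_{t \in S^{-1}(\nu_0) \cap \sigma(T)} P_t$ over the finitely many preimages. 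On each $\operatorname{Ran} P_t$, since $t \neq 0$ is a pole of $(\lambda - T)^{-1}$ of finite order $k_t$, the operator $T - t$ is nilpotent with index $\leq k_t$; expanding $S$ in Taylor series around $t$ then shows $S(T) - \nu_0$ is nilpotent of index $\leq k_t$ on $\operatorname{Ran} P_t$, hence $\nu_0$ is a pole of $(\nu - S(T))^{-1}$ of order at most $\max_t k_t < \infty$. This establishes $S(T) \in \mathcal{A}\mathcal{A}$, and walking back up the chain of reductions gives $p(M_C) \in \mathcal{A}\mathcal{A}$, i.e.\ $M_C \in \mathcal{P}\mathcal{A}\mathcal{A}$.
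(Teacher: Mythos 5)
Your argument is correct, and while it shares the paper's skeleton, it resolves the crucial step differently. Like the paper, you reduce everything to Proposition \ref{emseeaa}: $p(M_C)$ is block upper triangular with diagonal $p(A), p(B)$, so the forward direction is identical, and the converse comes down to producing one polynomial $p$ with $p(A), p(B) \in \mathcal A\mathcal A$ simultaneously. The paper does this by taking the product $pq$ of the two witnessing polynomials and appealing to the Cauchy-integral argument behind Theorem 5.9.2 of [33] (essentially the ``if'' direction of Theorem \ref{AAstructuuri}): since $pq$ vanishes at the finitely many accumulation points of the singularities of $(\lambda-A)^{-1}$, all nonzero singularities of $(w-(pq)(A))^{-1}$ are poles. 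You instead invoke the ``only if'' direction of Theorem \ref{AAstructuuri} to split $X$ and $Y$ into invariant summands, build $p$ from the shifts $\alpha_i,\beta_j$ (in effect the square-free part of the paper's $pq$), and then prove from scratch the key lemma that $S(T)\in\mathcal A\mathcal A$ whenever $T\in\mathcal A\mathcal A$ and $S(0)=0$, via the spectral mapping theorem, the identification of the spectral projection of $S(T)$ at $\nu_0$ with that of $T$ over the finite fibre $S^{-1}(\nu_0)\cap\sigma(T)$, and nilpotency of $T-t$ on each spectral subspace (the standard characterization of a pole of order $k_t$). Your route is more self-contained --- it does not defer the analytic core to [33] --- and it yields explicit pole-order bounds ($\le \max_t k_t$) for $p(A)$; the cost is that it leans on the Riesz--Dunford spectral-projection machinery and on the structure theorem, whereas the paper's version works directly with the given witnesses and is shorter by citation. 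Both arguments are sound; only make sure you state (or cite) the two standard facts you use --- the projection identity $P_{\nu_0}=\sum_{t\in S^{-1}(\nu_0)\cap\sigma(T)}P_t$ and the equivalence ``isolated spectral point is a pole of order $n$ iff the operator minus that point is nilpotent of index $n$ on the spectral subspace'' --- since they carry the weight of your subclaim.
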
 
 
\begin{proof}
If $p(M_C)$ is almost algbebraic, then by Proposition \ref{emseeaa} both $p(A)$ and $p(B)$ are almost algebraic.  
In the other direction, assume that $p(A)$ and $q(B)$ are almost algebraic.  Here we need to conclude that  both $(pq)(A)$ and $(pq)(B)$ are then polynomially almost algebraic.  Then so is $(pq)(M_C)$ again by Proposition  \ref{emseeaa}.
Consider $(pq)(A)$ as $(pq)(B)$ is similar.  As $p(A)$ is polynomially almost algebraic, it means that $(\lambda-A)^{-1}$ is meromorphic except at a finite  set of points $\lambda_1, \dots, \lambda_m$ and $p$ vanishes at these points.  But then $pq$ is another  nontrivial polynomial which also vanishes at these points, and all we need to conclude that all  nonzero singularities of $(z- (pq)(A))^{-1}$ are poles.   In the proof of Theorem 5.9.2, [33] this has been carried out for the minimal polynomial  but  the discussion holds as such for any  monic polynomial   which vanish at  $\lambda_1, \dots, \lambda_m$.

\end{proof}

Consider next  polynomial compactness. While $M_C$ is compact  only if all $A,B,C$ are compact,  notice that 
$$
M_C^2 = \begin{pmatrix} A^2& AC+CB\\
& B^2 \end{pmatrix}
$$
is  compact when $A^2, B^2$ and $AC+CB$ are.  This happens in particular
when $A$ and $B$ are compact.  
This allows us to  formulate

\begin{proposition}
$M_C\in \mathcal P\mathcal K$  if and only if both  $A\in \mathcal P\mathcal K$ and $B\in \mathcal P\mathcal K$.
\end{proposition}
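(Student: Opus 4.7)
The plan is to follow the same template as the preceding proposition on polynomial almost algebraicity, with compactness in place of meromorphicity of the resolvent.

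For the necessity direction, suppose $p(M_C)$ is compact. Let $\iota_X\colon X\to X\oplus Y$ and $\pi_X\colon X\oplus Y\to X$ be the natural inclusion and projection, and define $\iota_Y,\pi_Y$ analogously; all four are bounded. Since $p(M_C)$ is block upper triangular with diagonal entries $p(A)$ and $p(B)$, we have
\begin{equation}
p(A)=\pi_X\,p(M_C)\,\iota_X,\qquad p(B)=\pi_Y\,p(M_C)\,\iota_Y.\notag
\end{equation}
Compactness of $\mathcal{K}(X),\mathcal{K}(Y)$ is preserved under pre- and post-composition by bounded operators, so $p(A)$ and $p(B)$ are compact, giving $A,B\in\mathcal P\mathcal K$.

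For the sufficiency direction, suppose $p(A)$ and $q(B)$ are compact, and set $r=pq$. I would show that $r(M_C)$ is compact. The diagonal blocks of $r(M_C)$ are $r(A)=p(A)q(A)$ and $r(B)=p(B)q(B)$, which are compact because the ideal $\mathcal K$ is absorbing under bounded multiplication. For the upper-right block, the same computation used in the algebraic case above shows
\begin{equation}
r(M_C)=\begin{pmatrix} r(A) & r[A,B](C)\\ & r(B)\end{pmatrix},\notag
\end{equation}
where for $r(z)=\sum c_n z^n$ the divided-difference block is $r[A,B](C)=\sum_n c_n\sum_{k=0}^{n-1}A^{\,n-1-k}CB^{\,k}$. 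I would then invoke the scalar product rule for divided differences,
\begin{equation}
(pq)[\lambda,\mu]=p(\lambda)\,q[\lambda,\mu]+p[\lambda,\mu]\,q(\mu),\notag
\end{equation}
which, applied coefficient-wise with the commuting left- and right-multiplication operators $L_A\colon C\mapsto AC$ and $R_B\colon C\mapsto CB$ on $\mathcal{B}(Y,X)$, yields the operator identity
\begin{equation}
r[A,B](C)=p(A)\,q[A,B](C)+p[A,B](C)\,q(B).\notag
\end{equation}
The first summand is compact because $p(A)$ is compact and $q[A,B](C)$ is bounded; the second is compact because $q(B)$ is compact and $p[A,B](C)$ is bounded. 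Hence the off-diagonal block is compact, and all three blocks of $r(M_C)$ being compact, $r(M_C)$ itself is compact, so $M_C\in\mathcal P\mathcal K$.

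The only nontrivial step is the divided-difference factorization of the upper-right block and the choice $r=pq$ that makes every summand factor through either $p(A)$ on the left or $q(B)$ on the right. Once that observation is in place the argument is routine; in particular, this is the same device that made the "if" direction of the algebraic case work, where the simplification $p(A)=q(B)=0$ killed the block entirely rather than merely making it compact.
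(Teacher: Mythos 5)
Your argument is correct, and both directions are sound; the necessity step (extracting the diagonal blocks with the injections and projections) is the same as the paper's. Where you genuinely diverge is in the sufficiency step: the paper also takes $r=pq$, observes that the diagonal blocks $r(A)$, $r(B)$ of $r(M_C)$ are compact, but makes no attempt to control the corner block directly — instead it squares, noting that for a block upper triangular operator with compact diagonal blocks and arbitrary bounded corner $E$ the square has corner $r(A)E+E\,r(B)$, which is compact, so $(pq)^2(M_C)\in\mathcal K$. You instead show that $r(M_C)$ itself is already compact, via the divided-difference Leibniz identity $(pq)[A,B](C)=p(A)\,q[A,B](C)+p[A,B](C)\,q(B)$, justified through the commuting left- and right-multiplication operators $L_A$, $R_B$ on $\mathcal B(Y,X)$ (or by a direct check on monomials). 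The trade-off: the paper's device is more elementary — it needs nothing beyond "compact ideal absorbs bounded" and one matrix multiplication — while yours requires the extra factorization lemma but produces a simplifying polynomial of degree $\deg p+\deg q$ rather than $2(\deg p+\deg q)$, which is a genuine sharpening in the spirit of the paper's interest in low-degree simplifying polynomials; it also makes transparent why the choice $r=pq$ forces every term of the corner to factor through $p(A)$ on the left or $q(B)$ on the right, exactly as in the algebraic case where those factors vanish outright.
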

\begin{proof}
Let $p$ and $q$ be polynomials such that $p(A)\in \mathcal K$ and $q(B)\in \mathcal K$.  Then  both $(pq)(A)$ and $(pq)(B)$ are compact.  Thus
$$
(pq)(M_C)= \begin{pmatrix} (pq)(A)& (pq)[A,B]C\\
& (pq)(B)
\end{pmatrix}
$$
has compact diagonal blocks 
and hence  $(pq)^2 (M_C)$ is compact.  

On the other  hand, if $p(M_C)$ is compact in then both diagonal blocks are compact and thus $A$ and $B$ are polynomially compact.

\end{proof}


For Riesz operators we  restrict the discussion to Hilbert spaces, where the Olsen's characterization Theorem \ref{olsen} can be used.

\begin{proposition} Let $A$ and $B$ operate in separable Hilbert spaces. Then 
$M_C \in \mathcal P\mathcal R$ if and only if both $A\in \mathcal P \mathcal R$
and $B\in \mathcal P \mathcal R$.
\end{proposition}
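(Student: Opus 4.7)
The plan is to follow the template just used for the polynomially compact case, leaning on the preceding proposition that $M_C$ is Riesz iff both $A$ and $B$ are Riesz. One direction is then immediate: if $p(M_C)\in\mathcal R$, the operator $p(M_C)$ is itself upper block triangular with diagonal entries $p(A)$ and $p(B)$, so applying that preceding proposition to $p(M_C)$ shows at once that $p(A)$ and $p(B)$ are Riesz, whence $A,B\in\mathcal P\mathcal R$.

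For the converse, I would pick polynomials $p_A,p_B$ with $p_A(A)$ and $p_B(B)$ Riesz, and set $p=p_Ap_B$. The target is to establish that $p(A)$ and $p(B)$ are both Riesz; once this is done, $p(M_C)$ is again upper block triangular with Riesz diagonal blocks, so the preceding proposition makes $p(M_C)$ itself Riesz and places $M_C$ in $\mathcal P\mathcal R$. To see that $p(A)$ is Riesz I would pass to the Calkin algebra $\mathcal B(H)/\mathcal K(H)$: Ruston's characterisation (Theorem \ref{ruston}) is equivalent to saying that $R$ is Riesz precisely when its class $R+\mathcal K$ is quasinilpotent in the Calkin algebra. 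Since $p_A(A)$ and $p_B(A)$ are both polynomials in $A$ they commute, and in any unital Banach algebra a quasinilpotent element times a commuting element is quasinilpotent, because $\|(ab)^n\|^{1/n}\le\|a^n\|^{1/n}\|b^n\|^{1/n}\to 0$. Thus the class of $p(A)=p_A(A)\,p_B(A)$ is quasinilpotent in the Calkin algebra and so $p(A)$ is Riesz; symmetrically, $p(B)=p_A(B)\,p_B(B)$ is Riesz.

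The main obstacle, in contrast to the polynomially compact proposition just above, is that the Riesz operators do not form a two-sided ideal of $\mathcal B(H)$, so one cannot simply invoke ideal absorption to multiply $p_A$ by $p_B$. What rescues the argument is the commutativity of $p_A(A)$ and $p_B(A)$ as polynomials in one operator, which lets one work inside the commutative subalgebra of the Calkin algebra generated by their classes; equivalently this is the spectral mapping identity $\sigma_e(p(A))=p(\sigma_e(A))$, which reduces the question to observing that $\sigma_e(A)\subset p_A^{-1}(0)$ and that $p$ vanishes on $p_A^{-1}(0)$. The separable Hilbert hypothesis is needed only insofar as it underlies the Calkin/Ruston framework already used in this section.
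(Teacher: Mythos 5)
Your proof is correct, and it takes a genuinely different route from the paper's. The paper restricts to separable Hilbert spaces precisely so that it can invoke Olsen's decomposition (Theorem \ref{olsen}) and write $A=G+K$, $B=H+L$ with $G,H$ algebraic and $K,L$ compact; choosing $p,q$ with $p(G)=q(H)=0$ it gets $(pq)(A)$ and $(pq)(B)$ compact, hence $(pq)^2(M_C)$ compact, and then Ruston's criterion (Theorem \ref{ruston}) makes $(pq)(M_C)$ Riesz. You instead stay at the Riesz level: reading Ruston's criterion as quasinilpotence of the class in the Calkin algebra, you use commutativity of $p_A(A)$ and $p_B(A)$ (equivalently the essential spectral mapping theorem) to see that $(p_Ap_B)(A)$ and $(p_Ap_B)(B)$ are Riesz, and then apply the preceding block-triangular proposition to $p(M_C)$; you also spell out the converse direction, which the paper leaves implicit. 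What each buys: the paper's argument is concrete and reduces everything to compactness of a power, but it really only covers operators that are algebraic-plus-compact, i.e.\ polynomially compact ones --- the Foias--Pearcy quasinilpotent operator with no compact power is Riesz (hence polynomially Riesz) yet admits no such decomposition, so the opening ``we may assume'' is stronger than the hypothesis. Your Calkin-algebra argument avoids this, uses only Ruston's theorem and the Riesz result for $M_C$, and in fact needs neither separability nor the Hilbert space structure, so it proves the proposition in arbitrary Banach spaces.
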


\begin{proof}
We may assume that  $A=G+K$ and $B=H+L$ where $G$ and $H$ are algebraic and $K$ and $L$ are compact.  Let $p$ and $q$ be such that $p(G)=q(H)=0$ so that $p(A)$ and $q(B)$ are compact.  But then $(pq)(A)$ and $(pq)(B)$ are compact  and we conclude  again that $(pq)^2(M_C)$ is compact.  By  Theorem \ref{ruston} $(pq)(M_C)$ is then Riesz.

\end{proof}

\begin{proposition}
If $M_0 \in \mathcal N_{orm}$, then $M_C \in \mathcal P\mathcal N_{orm}$ if and only if there exists a  nontrivial $p$ such that  $p(M_0) = p(M_C)$.
\end{proposition}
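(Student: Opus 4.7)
The plan is to reduce the statement to Proposition \ref{mcnormaali} (the result that $M_C$ normal with $C\not=0$ forces $M_0$ non-normal) by observing that the block upper triangular structure is preserved under polynomial evaluation. The reverse implication will be essentially immediate, while the forward implication rests on applying Proposition \ref{mcnormaali} to $p(M_C)$ itself.

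First I would note that for any polynomial $p$, the block structure gives
\begin{equation}
p(M_C) = \begin{pmatrix} p(A) & Y \\ & p(B) \end{pmatrix}
\end{equation}
for some $Y \in \mathcal B(Y,X)$ obtainable via divided differences of $p$ at $A$ and $B$ acting on $C$ (the explicit form of $Y$ will not be needed). In particular, the block diagonal of $p(M_C)$ coincides with $p(M_0) = \mathrm{diag}(p(A),p(B))$, so the identity $p(M_C) = p(M_0)$ is equivalent to $Y = 0$.

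For the direction ($\Leftarrow$): if some nontrivial $p$ satisfies $p(M_C)=p(M_0)$, then since $M_0$ is normal we have $p(M_0)$ normal (polynomials of a normal operator are normal), hence $p(M_C)$ is normal and $M_C \in \mathcal P \mathcal N_{orm}$ by definition.

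For the direction ($\Rightarrow$): suppose $p$ is nontrivial with $p(M_C)$ normal. View $p(M_C)$ as a block upper triangular operator on $X \oplus Y$ with diagonal blocks $p(A), p(B)$ and corner $Y$; its associated block diagonal is precisely $p(M_0)$. Apply Proposition \ref{mcnormaali} to $p(M_C)$: if $Y \neq 0$ then $p(M_0)$ would fail to be normal. But $M_0$ is normal by hypothesis, so $p(M_0)$ is normal. Therefore $Y = 0$, and hence $p(M_C) = p(M_0)$.

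There is no serious obstacle here; the only thing to be careful about is that Proposition \ref{mcnormaali} is applied not to $M_C$ itself but to $p(M_C)$, which is legitimate because $p(M_C)$ has exactly the same $2\times 2$ upper triangular block form with the diagonal blocks being normal (images of the normal operators $A,B$ under $p$).
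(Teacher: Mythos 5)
Your proof is correct and follows essentially the same route as the paper: the paper's own argument also applies Proposition \ref{mcnormaali} to the block upper triangular operator $p(M_C)$, whose diagonal part is $p(M_0)$ (normal since $M_0$ is), to force the corner block to vanish. Your write-up is in fact more explicit than the paper's terse version, spelling out the trivial reverse implication and the fact that polynomials of a normal operator remain normal.
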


\begin{proof}  If $0\not= p(M_C)\in \mathcal N_{orm}$   then  by Proposition \ref{mcnormaali} $p(B)\notin \mathcal N_{orm}$  and further $p(M_0) \notin \mathcal N_{orm}$ which  contradicts $M_0\in \mathcal N_{orm}$. \end{proof}

\bigskip

\bigskip 
 
{\bf References}

\bigskip

\noindent [1] K. Ando, H. Kang, Y. Miyanishi, and M. Putinar,  Rev.Roumaine Math. Pures Appl. 66 (2021), 3-4. 545-575

\smallskip

\noindent [2]  Diana Andrei, Multicentric holomorphic calculus for n-tuples of commuting operators, Adv. Oper. Theory, Vol. 4, Number 2 (2019), 447-461

\smallskip

\noindent [3] Diana Andrei, Olavi Nevanlinna, Tiina Vesanen, Rational functions as new variables, arXiv:2104.11088 [math.CV] (April 2021)

\smallskip

\noindent [4]  Apetrei, Diana, Nevanlinna, Olavi: Multicentric calculus and the Riesz projection, Journal of Numerical Analysis and Approximation Theory. 44 (2), 2016, p. 127-145 .

\smallskip

\noindent [5]  C. Apostol and C. Foias, On the distance to biquasitriangular
operators, Rev. Roum. Math. Pures
Appl. 20 (1975) 261-265

\smallskip

\noindent [6] C. Apostol, C.Foias, D.Voiculescu, On the norm-closure of nilpotents. II, Rev. Roumaine Math. Pures Appl. 19 (1974), 549-557

\smallskip

\noindent [7]  C. Apostol, D.Voiculescu, On a problem of Halmos Rev. Roumaine
Math. Pures Appl. 19 (1974), 283-284

\smallskip

\noindent [8]   Spiros A. Argyros and Richard G. Haydon, A hereditarily indecomposable
$\mathcal L_\infty$-space that solves the scalar-plus-compact problem,
Acta Math. 206 (2011), no. 1, 1 - 54, Doi:10.1007/s11511-011-0058-y.
MR2784662 (2012e:46031)

\smallskip 

\noindent [9] N.E. Benamura and  N.K. Nikolski,    Resolvent tests for similarity to a normal operator, Proc. London. Math. Soc., 78 (1999), no.3, pp. 585 - 626.


\smallskip

\noindent [10] C. K. Chui, P. W. Smith,  J. D. Ward, A note on Riesz operators, Proc.
Amer. Math. Soc.
Vol. 60, Oct. 1976, 92-94

\smallskip

\noindent [11]  John B. Conway, Domingo A. Herrero and Bernard B. Morrel, Completing the Riesz-Dunford functional calculus, Memoirs of AMS, November 1989, Vol. 82, Number 417
 
 \smallskip

\noindent [12]  Ra\'ul Curto, Mihai Putinar, Polynomially Hyponormal Operators, Operator Theory: Advances and Applications, Vol. 207, 195-207, Springer 2010 

\smallskip 

\noindent [13]  C. Davis and P. Rosenthal, Solving linear operator equations, Canad. J. Math. XXVI

\smallskip

\noindent [14]  John Derr, Angus E.Taylor:  Operators of meromorphic type with multiple poles of the resolvent,  Pacific J. Math. 12 (1962), no. 1, 85--111

\smallskip
 
\noindent [15] R. G. Douglas, Carl Pearcy, A note on quasitriangular operators
Duke Math. J. 37(1): 177-188 (March 1970). DOI: 10.1215/S0012-7094-70-03724-5

\smallskip

\noindent [16]  B.P. Duggal, Dragan S. Djordjevi\'c, Robin E. Harte and Sne\v zana  \v C. \v Zivkovi\'c- Zlatanovi\'c,
 Polynomially meromorphic operators, Mathematical Proceedings of the Royal Irish Academy 116A (2016), 71 - 86 ; 
http://dx.doi.org/10.3318/PRIA.2016.116.07
 
\smallskip

\noindent [17]  H.K. Du and J. Pan, Perturbation of spectrums of 2$\times $2 operator matrices, Proc. Amer.Math.
Soc. 121(1994), 761-776. MR 94i:47004

\smallskip

\noindent [18]  L.A.Fialkow, A Note on Non-Quasitriangular Operators II, Indiana Math. J.  23. No.3 (1973, 213-220

\smallskip

\noindent [19] C. Foias and C. Pearcy, A model for quasinilpotent operators, Michigan Math. J. 21 (1974), 399-404

\smallskip

\noindent [20]   
 F. Gilfeather, The structure and asymptotic behavior of polynomially compact operators, Proc. Amer. Math. Soc. 25 (1970) 127-134
 
\smallskip

\noindent [21] 
 Frank Gilfeather, Operator valued roots of Abelian analytic functions, Pacific J. Math. Vol. 55, No.1, (1974) 127- 148
 
\smallskip

\noindent [22] T. A. Gillespie and T. T. West, A characterisation and two examples of Riesz operators, Glasgow Math. J. 9 (1968), 106-110.  

\smallskip

\noindent [23]  P. R. Halmos, Capacity  in Banach Algebras,
Indiana University Mathematics Journal
Vol. 20, No. 9 (March, 1971), pp. 855-863

\smallskip

\noindent [24]  P. R. Halmos, Invariant subspaces  of polynomially compact operators, Pacific J.  Math.  Vol.16, No.3, 1966, 433 - 438

\smallskip

\noindent [25]  P.  R.  Halmos, Quasitriangular operators,  Acta Sci.  Math. (Szeged) 29  (1968),  283-293. MR 38  2627.

\smallskip

\noindent [26] Young Min Han, Sang Hoon Lee,  Woo Young Lee,  On the structure of polynomially compact operators, Math. Z.  Vol. 232 
 257-263 (1999)

\smallskip

\noindent [27]  Domingo A. Herrero, Most quasitriangular operators are triangular, most biquasitriangular operators are bitriangular, J. Operator Theory, 20, (1988), 251-267

\smallskip

\noindent [28]  Junjie Huang,  Aichun Liua, Alatancang Chen: Spectra of  2 x 2 Upper Triangular Operator Matrices, Filomat 30:13 (2016), 3587 3599
DOI 10.2298/FIL1613587H, http://www.pmf.ni.ac.rs/filomat

\smallskip

\noindent [29] Marko Huhtanen, Olavi Nevanlinna, Polynomials and lemniscates of indefiniteness, Numer. Math. (2016) 133: 233 - 253,  DOI 10.1007/s00211-015-0745-2

\smallskip

 \noindent [30]  Fuad Kittaneh, On the structure of polynomially normal operators,  Bull.Austral. Math.Soc. Vol 30, (1984), 11 -18. 

 \smallskip
 
 \noindent [31]. S.Kupin and S.Treil, Linear resolvent growth of weak contraction does not imply its similarity to a normal operator, Illinois Journal of Mathematics
Volume 45, Number 1, Spring 2001,  229 - 242  

 \smallskip
 
\noindent  [32]    
Matja\"yz Konvalinka,  Integr. equ. oper. theory, Vol. 52 (2) (2005), 271-284

 \smallskip
 
\noindent [33] Olavi Nevanlinna, Convergence of Iterations for Linear Equations, Birkh\"auser, (1993)

\smallskip

\noindent [34] Olavi Nevanlinna, Growth of operator valued meromorphic functions, Ann. Acad. Sci.Fenn. Math. Vol. 25, 2000, 3-30

\smallskip

\noindent [35] O. Nevanlinna,  Meromorphic Functions and Linear Algebra,   \ AMS Fields Institute Monograph 18  (2003)

\smallskip

\noindent [36] O. Nevanlinna, Computing the spectrum and representing the resolvent, Numerical Functional Analysis and Optimization, 30 (9 - 10):1025 - 1047, 2009

 \smallskip
 
\noindent [37] O. Nevanlinna, Multicentric Holomorphic Calculus, Computational Methods and Function Theory, June 2012, Vol. 12, Issue 1, 45 - 65.

\smallskip

\noindent [38] O. Nevanlinna, Lemniscates and K-spectral sets, J. Funct. Anal. 262, (2012), 1728 - 1741.

\smallskip

\noindent [39] O. Nevanlinna, Polynomial as a New Variable - a Banach Algebra with Functional Calculus, Oper. and Matrices 10 (3) (2016)  567 - 592

\smallskip

\noindent [40] O. Nevanlinna, Sylvester equations and polynomial separation of spectra, Oper.  and Matrices 13, (3) (2019),  867- 885

\smallskip

\noindent [41] 
N.Nikolski, Sergei Treil, 
Linear resolvent growth of rank one perturbation of a unitary operator does not imply its similarity to a normal operator, 
December 2002,  Journal d'Analyse Math\'ematique 87(1):415 - 431
DOI: 10.1007/BF02868483

\smallskip
 
\noindent [42] Catherine L. Olsen
 A Structure Theorem for Polynomially Compact Operators,  American Journal of Mathematics, Vol. 93, No. 3 (Jul., 1971), pp. 686-698 
 
\smallskip
\smallskip 
\noindent [43] C.M. Pearcy, Some recent developements in operator theory, CBMS 36, Providence:AMS, 1978.

 \smallskip

\noindent  [44]  C. Pearcy, N. Salinas, Can. J. Math., Vol. XXVI, No. 1, 1974, pp. 115-120

\smallskip

\noindent [45]  Th. Ransford, Potential Theory in the Complex Plane, London Math. Soc. Student Texts 28, Cambridge Univ. Press, 1995
  
\smallskip

 \noindent [46] 	A.F. Ruston, Operators with a Fredholm theory, J. London Math. Soc.  29 (1954) pp. 318 - 326
 
 \smallskip
 
 \noindent [47]  David S.G. Stirling, The  Capacity of Elements of Banach Algebras,  Doctor of  Philosophy Thesis, University of Edinburgh,  1972
 
\smallskip
\noindent
[48] B. Sz.-Nagy, On uniformly bounded linear transformations in Hilbert Space, Acta Sci. Math., 11 (1947), 152-157.

\smallskip

\noindent [49]  A. E. Taylor, Mittag-Leffeler expansions and spectral theory,
 Pacific J. Math., 10 (1960), 1049-1066
 
 \smallskip
 
\noindent [50]    D. Voiculescu, Norm-limits of algebraic operators, Rev. Roumaine Math. Pures et Appl. 19 (1974), 371-378. 
 
 \smallskip
 
 \noindent [51]   T. T. West, The decomposition of Riesz operators, Proc. London Math. Soc. (3) 16 (1966),  737-752
 
 \smallskip

\noindent  [52]  Sne\v zana  \v C. \v Zivkovi\'c-Zlatanovi\'c,
 Dragan S. Djordjevi\'c, Robin E. Harte, Bhagwati P. Duggal,
 Filomat 28:1 (2014), 197 205
DOI 10.2298/FIL1401197Z

\bigskip

  {\bf  Notation and definitions}

 \bigskip
 
 $\mathcal A$   \ \ \  \ \ \  \ \  algebraic operators,  Def  2.6 
 
 $ \mathcal A  \mathcal A$   \ \ \ \ \ almost algebraic, Def 2.6 
  
  $ \mathcal B$   \ \ \ \ \ \   \ \ \ bounded operators
   
 $  \mathcal B i  \mathcal Q \mathcal T$  \ \  biquasitriangular, Def 3.1
 
 $ \mathcal C$   \ \ \ \ \ \   \ \ \  see Def 3.6  ($ = $ cl $ \mathcal N$)
 
  $ \mathcal F$  \ \ \   \ \ \  \ \ \ finite rank
  
  $ \mathcal K$   \ \ \  \ \ \  \ \ \ compact  
  
  $\mathcal M$  \ \ \  \ \ \  \ \  meromorphic, of meromorphic type   ($ = \mathcal A  \mathcal A$)
 
  $ \mathcal N$   \ \ \  \ \ \  \ \ \  nilpotent
  
   $ \mathcal N_{orm}$  \ \ \    normal operators
   
 $\mathcal U$    \ \ \  \ \ \  \ \ \ unitary operators
  
$  \mathcal P \mathcal A \mathcal A$  \ \ \ polynomially almost algebraic,  Def 2.1 and Def 2.6

$ \mathcal P \mathcal K$  \ \ \ \ \ \  polynomially compact

$ \mathcal P \mathcal N_{orm}$       polynomially normal

$ \mathcal P \mathcal R$  \ \ \  \ \ \ polynomially Riesz

$\mathcal R$  \ \ \  \ \ \  \ \ \  Riesz,  Def 2.21

$  \mathcal Q \mathcal A$ \ \ \  \ \ \   quasialgebraic, Def 2.6
  
 $  \mathcal Q  \mathcal D$   \ \ \ \ \ \  quasidiagonal, Def 3.2
 
 $ \mathcal Q \mathcal N$  \ \ \  \ \ \  quasinilpotent,  
  
 $  \mathcal Q \mathcal T$  \ \ \  \ \ \  quasitriangular, Def 3.1

 \bigskip

 \bigskip
 
$ \sigma(A)$  \ \ \ \ \    spectrum of $A$

$\sigma_\delta(A)$\ \ \ \  approximate defect spectrum, Def 4.3

$\sigma_a(A)$ \ \ \ \ \   approximate point spectrum, Def 4.3

$\sigma_j(A)$ \ \ \ \ \      $j^{th}$ singular value of $A$

$s(A)$\ \ \ \ \ \ \ \   total logarithmic size (2.15)

$\alpha_j(A)$ \ \ \ \     distance to algebraic operators of degree j,  Def 2.9

$\widehat K$ \ \ \ \ \ \   \ \ \ \ \  polynomially convex hull of a compact set $K$

$m_A(z)$  \ \ \ \ \ \   \  minimal polynomial of $A$

$s_A(z)$  \ \ \ \ \ \   \ \ \   simplifying polynomial  of $A$, Remark 3.9
 
 $m_\infty(r,F)$		\ \ \    			(2.12)
 
 $N_\infty(r,F)$		\ \ \    			(2.13)
 
 $T_\infty(r,F)$		\ \ \ \   			(2.14)
 
 $T_1(r,F)$		\ \ \ \ \  \			(2.16)

\end{document}